\newcommand{\bB}{\mathbb{B}}
\newcommand{\bC}{\mathbb{C}}
\newcommand{\bD}{\mathbb{D}}
\newcommand{\bN}{\mathbb{N}}
\newcommand{\bR}{\mathbb{R}}
\newcommand{\bZ}{\mathbb{Z}}
\newcommand{\bT}{\mathbb{T}}
\newcommand{\cA}{\mathcal{A}}
\newcommand{\cB}{\mathcal{B}}
\newcommand{\cG}{\mathcal{G}}
\newcommand{\cH}{\mathcal{H}}
\newcommand{\cK}{\mathcal{K}}
\newcommand{\cM}{\mathcal{M}}
\newcommand{\cP}{\mathcal{P}}
\newcommand{\cS}{\mathcal{S}}
\newcommand{\cVT}{\mathcal{VT}}
\newcommand{\cV}{\mathcal{V}}
\newcommand{\cW}{\mathcal{W}}
\newcommand{\cX}{\mathcal{X}}
\newcommand{\cZ}{\mathcal{Z}}
\newcommand{\hG}{\widehat{G}}
\newcommand{\hH}{{\widehat{H}}}
\newcommand{\hP}{{\widehat{P}}}
\newcommand{\tN}{{\widetilde{N}}}
\newcommand{\tR}{{\widetilde{R}}}
\newcommand{\clos}{\operatorname{clos}}
\newcommand{\hnu}{\widehat{\nu}}
\newcommand{\la}{\lambda}
\newcommand{\dif}{\mathrm{d}}
\renewcommand{\Im}{\operatorname{Im}}
\renewcommand{\phi}{\varphi}
\newcommand{\conju}[1]{\overline{#1}}
\newcommand{\imagunit}{\operatorname{i}}
\newcommand{\enumber}{\operatorname{e}}
\newcommand{\linspan}{\operatorname{span}}
\renewcommand{\kappa}{\varkappa}
\newcommand{\Ber}{\operatorname{Ber}}
\newcommand{\WOT}{\operatorname{WOT}}
\newcommand{\esssup}{\operatornamewithlimits{ess\,sup}}
\newcommand{\eqdef}{\coloneqq}
\newtheorem{thm}{Theorem}[section]
\newtheorem{prop}[thm]{Proposition}
\newtheorem{lem}[thm]{Lemma}
\newtheorem{cor}[thm]{Corollary}
\newtheorem{assumption}{Assumption}
\theoremstyle{definition}
\newtheorem{example}[thm]{Example}
\newtheorem{rem}[thm]{Remark}
\newcommand{\myurl}[1]{\href{#1}{#1}}
\author{Crispin Herrera-Ya\~{n}ez, Egor A. Maximenko, Gerardo Ramos-Vazquez}
\title{\vspace{-1.0cm}Translation-invariant operators\\
in reproducing kernel Hilbert spaces}
\begin{document}
\maketitle

\begin{abstract}
Let $G$ be a locally compact abelian group with a Haar measure, and $Y$ be a measure space.
Suppose that $H$ is a reproducing kernel Hilbert space of functions on $G\times Y$,
such that $H$ is naturally embedded into $L^2(G\times Y)$
and is invariant under the translations associated with the elements of $G$.
Under some additional technical assumptions,
we study the W*-algebra $\mathcal{V}$ of translation-invariant bounded linear operators acting on $H$.
First, we decompose $\mathcal{V}$ into the direct integral of the W*-algebras of bounded operators acting on the reproducing kernel Hilbert spaces $\widehat{H}_\xi$, $\xi\in\widehat{G}$,
generated by the Fourier transform of the reproducing kernel.
Second, we give a constructive criterion for the commutativity of $\mathcal{V}$.
Third, in the commutative case,
we construct a unitary operator
that simultaneously diagonalizes all operators belonging to $\mathcal{V}$,
i.e., converts them into some multiplication operators.
Our scheme generalizes many examples previously studied by Nikolai Vasilevski and other authors.

\medskip
MSC 2020: 22D25, 46E22, 42A38, 47B35.





\medskip
Keywords: unitary representation,
reproducing kernel Hilbert space,
translation-invariant operators,
W*-algebra,
Fourier transform.
\end{abstract}

\tableofcontents

\clearpage

\section{Introduction}
\label{sec:intro}

It is well known and easy to see that the radial Toeplitz operators 
on the Bergman space $\cA^2(\bD)$ are diagonal in the monomial basis
and therefore generate a commutative C*-algebra.
In 1999, Vasilevski \cite{Vasilevski1999BergmanToeplitz}
found another non-trivial commutative C*-algebra of operators on the Bergman space.
Namely, he considered ``vertical'' Toeplitz operators, acting in the Bergman space $\cA^2(\Pi)$ over the upper halfplane $\Pi$
and invariant under horizontal translations,
and constructed a unitary operator $R\colon \cA^2(\Pi)\to L^2(\bR_+)$
that simultaneously ``diagonalizes'' all vertical Toeplitz operators, converting them into multiplication operators.
After that, many mathematicians obtained similar results
for other groups of transformations, other spaces of functions, and other domains
\cite{Vasilevski2008book,LoaizaLozano2013,GrudskyKarapetyantsVasilevski2004rad,GrudskyKarapetyantsVasilevski2004par,
GrudskyKarapetyantsVasilevski2004hyp,HutnikHutnikova2011,HutnikovaMiskova2015,RamirezOrtegaSanchezNungaray2015}.
Grudsky, Quiroga, and Vasilevski~\cite{GrudskyQuirogaVasilevski2006} performed a complete study of non-trivial commutative C*-algebras of Toeplitz operators on the weighted Bergman spaces over the unit disk.
Dawson, \'{O}lafsson, and Quiroga-Barranco~\cite{DawsonOlafssonQuiroga2015,DawsonOlafssonQuiroga2018}
showed that in the case of group-invariant operators
acting in the weighted Bergman spaces of analytic functions
over multidimensional domains,
some of the previous results follow naturally from the general theory
of unitary representations of C*-algebras.
Quiroga-Barranco and S\'{a}nchez-Nungaray~\cite{QuirogaBarrancoSanchezNungaray2021} studied commutative C*-algebras of Toeplitz operators in the weighted Bergman spaces over the unit ball using moment maps of the abelian subgroups of the biholomorphism group.

Here we propose another scheme to study group-invariant operators in reproducing kernel Hilbert spaces (RKHS).
We are inspired by the following general idea.
If $G$ is a locally compact group acting on a measure space $D$ such that the translations are unitary operators in $L^2(D)$, and $H$ is a RKHS over $D$ invariant under these translations, then it is natural to expect that the W*-algebra of translation-invariant operators can be described in terms of the Fourier transform (along the orbits of the group action) of the reproducing kernel.
In this paper, we apply this idea to the particular case when $G$ is a locally compact abelian group (LCAG) and the domain $D$ is a ``tube'' $G\times Y$, where $Y$ is just a measure space.
Our scheme is a natural generalization and developement of Vasilevski~\cite{Vasilevski1999BergmanToeplitz},~\cite[Section 3.1]{Vasilevski2008book},
see Example~\ref{example:vertical_analytic}.

\subsection*{Structure of the paper}

In Sections~\ref{sec:Wstar_Stone_Weierstrass} and~\ref{sec:commutativity_of_direct_integral},
we prove two simple general results about W*-algebras: an analog of the Stone--Weierstrass theorem and a criterion of commutativity of a direct integral.

In Section~\ref{sec:invar_L2}, we recall some properties of the Fourier transform and consider 
the unitary representation
of the group $G$ on the space $L^2(G\times Y)$ defined by
\begin{equation}
\label{eq:rho_GY_def}
(\rho_{G\times Y}(a) f)(u,v)\eqdef f(u-a,v)\qquad(a\in G,\ u\in G,\ v\in Y).
\end{equation}
Using the Fourier transform with respect to the first argument,
$F\otimes I_{L^2(Y)}$,
we describe the W*-algebra $\rho_{G\times Y}'$ of bounded linear operators on $L^2(G\times Y)$, commuting with
the horizontal translations $\rho_{G\times Y}(a)$.

The main ideas of Sections~\ref{sec:Wstar_Stone_Weierstrass}--\ref{sec:invar_L2} are well known, but we recall them in a convenient form and state explicitly some results that we have been unable to find in the literature.

In Section~\ref{sec:invar_H}
we suppose that $H$ is a closed subspace of $L^2(G\times Y)$,
invariant under $\rho_{G\times Y}(a)$ for every $a$ in $G$.
Let $\rho_H(a)$ be the compression of $\rho_{G\times Y}(a)$ onto $H$.
Then $\rho_H$ is a unitary representation of $G$ in $H$.
Our principal object of study is the W*-algebra $\cV$
of translation-invariant bounded linear operators acting in $H$,
i.e., the centralizer of the representation $\rho_H$:
\begin{equation}\label{eq:def_V}
\cV \eqdef
\rho_H'
=
\{S\in\cB(H)\colon\quad \forall a\in G\quad
\rho_H(a) S = S \rho_H(a)\}.
\end{equation}
We show that the space $\hH \eqdef (F\otimes I) (H)$ decomposes into the direct integral of some fibers $\hH_\xi\subseteq L^2(Y)$,
and the W*-algebra $\cV$ is spatially isomorphic to the direct integral of the factors $\cB(\hH_\xi)$:
\begin{equation}\label{eq:H_and_V_decomposition}
\hH
=\int_{\Omega}^\oplus\hH_\xi\,\dif\hnu(\xi),\qquad
\Phi \cV\Phi^\ast
=\int_{\Omega}^\oplus\cB(\hH_\xi)\,\dif\hnu(\xi).
\end{equation}

Here $\hG$ is the dual group of $G$, $\hnu$ is the Haar measure on $\hG$ associated with $\nu$, $\Phi\colon H\to\hH$ is the compression of $F\otimes I_{L^2(Y)}$, and $\Omega$ is defined as the set of all ``frequencies'' $\xi$ in $\hG$ corresponding to the non-zero fibers $\hH_\xi$.

In particular, we conclude that $\cV$ is commutative if and only if $\dim \hH_\xi=1$ for $\hnu$-almost all $\xi$ in $\Omega$.
This condition is close to the multiplicity-free condition from \cite{DawsonOlafssonQuiroga2015,DawsonOlafssonQuiroga2018}.

In Section~\ref{sec:invar_RKHS}
we assume that $H$ is a RKHS and denote by $(K_{x,y})_{x\in G,y\in Y}$ the reproducing kernel of $H$.
The translation-invariance of $H$ is equivalent to the following property of the reproducing kernel:
\begin{equation}\label{eq:K_translation_invariance}
K_{x,y}(u,v)
=K_{0,y}(u-x,v)\qquad(x,u\in G,\ y,v\in Y).
\end{equation}
We define $L$ as the Fourier transform of $K$
along the action of the group:
\begin{equation}\label{eq:L_definition}
L_{\xi,y}(v)
\eqdef
(\Phi K_{0,y})(\xi,y)
=
\int_G \overline{\xi(u)}\,
K_{0,y}(u,v)\,\dif\nu(u)
\qquad(\xi\in\hG,\ y,v\in Y).
\end{equation}
Under some technical assumptions, we show that each fiber $\hH_\xi$ is a RKHS, and its reproducing kernel is $(L_{\xi,y})_{y\in Y}$.
As a consequence, we establish a constructive criterion for commutativity of $\cV$, in terms of $L$.

In Section~\ref{sec:commutative_case} we consider the commutative case (when $\dim(\hH_\xi)=1$ for all $\xi$ in $\Omega$) and construct a unitary operator $R$ that simultaneously diagonalizes all operators belonging to $\cV$.
In particular, we diagonalize
Toeplitz operators with translation-invariant generating symbols.

In Section~\ref{sec:non_commutative_case} we consider the non-commutative case with finite-dimensional fibers and construct a unitary operator $R$ that transforms elements of $\cV$ into matrix families.

Finally, in Section~\ref{sec:examples} we apply this scheme to various examples.
One of the examples is new, the others were studied before in many publications.

Our scheme may be viewed as an application of the von Neumann theory to the reproducing kernel Hilbert spaces over domains of the form $G\times Y$.
Here are the main advantages of our approach:
\begin{itemize}
\item we study translation-invariant
operators associated with a general LCAG and acting in a 
general RKHS,
assuming only some technical conditions (possibly, this is the first paper in such settings);
\item we reduce the study of the algebra $\cV$ to the computation of one Fourier integral~\eqref{eq:L_definition};
\item we do not require any analytic or differential structure on the domain.
\end{itemize}
The scheme proposed in this paper unifies many of the currently known results about translation-invariant operators acting in RKHS, but it is not universal.
For example, it cannot be applied to radial operators
on RKHS over the unit ball $\bB_n$ in $\bC^n$ with $n>1$,
because the corresponding unitary group $U(n)$ is not commutative,
and $\bB_n$ does not decompose into a product of the form $U(n)\times Y$.

\section{\texorpdfstring{An analog of the Stone--Weierstrass theorem for subalgebras of $\boldsymbol{L^\infty}$}{An analog of the Stone-Weierstrass theorem for subalgebras of Linfty}}
\label{sec:Wstar_Stone_Weierstrass}

In this section we recall some facts about commutative W*-algebras.
The main result, Theorem~\ref{thm:SW_for_Wstar_alg}, is an analog of the classic Stone--Weierstrass theorem adapted for
W*-subalgebras of $L^\infty(X,\mu)$.
We use some information about W*-algebras from Dixmier~\cite{Dixmier1981vonNeumann},
Sakai~\cite{Sakai1971}, and Takesaki~\cite{Takesaki2002}.

Given a Hilbert space $H$, we denote by $\cB(H)$ the W*-algebra of bounded linear operators acting on $H$
and by $\WOT$ the weak operator topology in $\cB(H)$.
Given a subset $\cS$ of $\cB(H)$, we denote by $\cS'$ the \emph{centralizer} (also called the \emph{commutant}) of $\cS$ in $\cB(H)$.
Given a subset $\cS$ of $\cB(H)$, we denote by $W^\ast(\cS)$ the von Neumann algebra generated by $\cS$.
It is known that $W^\ast(\cS)=(\cS')'$.

In this section, $X$ is a locally compact space and $\mu$ is a Radon measure on $X$.
We suppose that the support of $\mu$ coincides with $X$. Equivalently, $\mu(Y)>0$ for every non-empty open subset $Y$ of $X$.
Then $C_b(X)$ is naturally embedded into $L^\infty(X,\mu)$, where $C_b(X)$ is the C* algebra of bounded continuous functions $X\to\bC$.
For simplicity, we additionally suppose that $X$ is a $\sigma$-compact metric space.
We denote by $\tau_X$ or just by $\tau$
the weak-$\ast$ topology in $L^\infty(X,\mu)$.
Recall that if $(a_n)_{n\in\bN}$ is a bounded sequence in $L^\infty(X,\mu)$
converging pointwise to a function $b$,
then $a_n\xrightarrow{\tau}b$,
i.e., $\int_X a_n f\,\dif\mu\to\int_X b f\,\dif\mu$
for every $f$ in $L^1(X,\mu)$.
Indeed, if $C<+\infty$ and $\|a_n\|_\infty\leq C$ for every $n$,
then the dominated convergence theorem can be applied
with the ``dominant function'' $C|f|$.

Given $b$ in $L^\infty(X,\mu)$, let $M_b\colon L^2(X,\mu)\to L^2(X,\mu)$ be the multiplication operator by $b$:
\[
(M_b f)(x) \eqdef b(x) f(x).
\]
We denote by $\cM_X$ the set of all such multiplication operators:
\[
\cM_X \eqdef \{M_b\colon\ b\in L^\infty(X,\mu)\}.
\]
It is well known and easy to see that $\cM_X$ is a commutative W*-subalgebra of $\cB(L^2(X,\mu))$.
The function $b\mapsto M_b$ is an isometric isomorphism
between the W*-algebras $L^\infty(X,\mu)$ and $\cM_X$.
In particular, $M_{b_1} M_{b_2}=M_{b_1 b_2}=M_{b_2} M_{b_1}$,
$\|M_b\|=\|b\|_\infty$,
and the spectrum of $M_b$ is the essential range of $b$.
The $\tau$-convergence of a net in $L^\infty(X,\mu)$ is equivalent to the $\WOT$-convergence of the corresponding multiplication operators.
It can be shown that
\begin{equation}
\label{eq:multiplication_centralizer}
\cM_X'=\cM_X.
\end{equation}
The following proposition is well known.
It can be proven by applying
Luzin's theorem~\cite[Theorem~7.10]{Folland1999real}
and the Tietze extension theorem,
or by using techniques of C*- and W*-algebras
\cite[proof of Theorem 3.1.2]{Takesaki2002}.

\begin{prop}\label{prop:C_weaklydense_in_Linfty}
Let $Y$ be a compact Hausdorff space
with a Radon measure $\mu_Y$ whose support coincides with $Y$.
Then $\clos_{\tau_Y}(C(Y))=L^\infty(Y,\mu_Y)$.
\end{prop}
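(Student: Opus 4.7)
The plan is to approximate an arbitrary element of $L^\infty(Y,\mu_Y)$ by a sequence of continuous functions that converges to it pointwise $\mu_Y$-almost everywhere while remaining uniformly bounded, and then to invoke the dominated-convergence observation recalled just above the proposition to upgrade this approximation to $\tau_Y$-convergence. Since $\clos_{\tau_Y}(C(Y))\subseteq L^\infty(Y,\mu_Y)$ is immediate (the latter space is $\tau_Y$-closed and contains $C(Y)$), this would yield equality.

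First, I would fix $b\in L^\infty(Y,\mu_Y)$ and set $M\eqdef\|b\|_\infty$. For each $n\in\bN$ I apply Luzin's theorem in the form of \cite[Theorem~7.10]{Folland1999real}, valid because $Y$ is compact Hausdorff and $\mu_Y$ is Radon, to obtain $b_n\in C(Y)$ with $\|b_n\|_\infty\le M$ and $\mu_Y(E_n)<2^{-n}$, where $E_n\eqdef\{y\in Y\colon b_n(y)\ne b(y)\}$. If one prefers the weaker formulation of Luzin's theorem which only asserts continuity of $b|_K$ for some closed $K\subseteq Y$, the Tietze extension theorem followed by composition with a $1$-Lipschitz retraction of $\bC$ onto the closed disk of radius $M$ supplies the same $b_n$.

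Next I would verify that $b_n\to b$ pointwise $\mu_Y$-almost everywhere. Since $\sum_n\mu_Y(E_n)<\infty$, the Borel--Cantelli lemma implies that $\mu_Y$-almost every $y\in Y$ lies in $E_n$ for only finitely many $n$, so $b_n(y)=b(y)$ for all sufficiently large $n$. Together with the uniform bound $\|b_n\|_\infty\le M$, the pre-proposition observation on bounded pointwise convergence gives $b_n\xrightarrow{\tau_Y}b$, proving $b\in\clos_{\tau_Y}(C(Y))$. The only potentially delicate point is invoking Luzin's theorem in exactly the form needed; this is however standard for Radon measures on compact Hausdorff spaces, so no real obstacle arises.
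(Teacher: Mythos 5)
Your proof is correct and follows essentially the route the paper itself indicates (Luzin's theorem, with Tietze as a backup to pass from a continuous restriction to a globally continuous function, followed by the bounded-pointwise-convergence observation stated just before the proposition); the Borel--Cantelli step to upgrade the measure bounds to almost-everywhere convergence is the natural way to make that sketch rigorous.
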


The next result is a generalization of Proposition~\ref{prop:C_weaklydense_in_Linfty}
to spaces with infinite measure.
Notice that $\cA$ is not supposed to be closed or dense
in the norm topology of $C_b(X)$.

\begin{thm}\label{thm:SW_for_Wstar_alg}
Let $X$ be a locally compact and $\sigma$-compact metric space,
$\mu$ be a Radon measure on $X$ whose support coincides with $X$, and
$\cA$ be a self-adjoint unital subalgebra of $C_b(X)$
separating points of $X$.
Then $\clos_\tau(\cA)=L^\infty(X,\mu)$.
\end{thm}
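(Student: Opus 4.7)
The plan is to show that $\cB\eqdef\clos_\tau(\cA)$ coincides with $L^\infty(X,\mu)$ by proving that $\cB$ contains $\chi_E$ for every Borel set $E\subseteq X$ and then invoking the norm-density of simple functions in $L^\infty(X,\mu)$. The first step is to check that $\cB$ is a unital, self-adjoint, norm-closed subalgebra of $L^\infty(X,\mu)$. Unitality and self-adjointness are immediate (the involution on $L^\infty$ is $\tau$-continuous since $\int\overline{a_\alpha}f\,\dif\mu=\overline{\int a_\alpha\overline{f}\,\dif\mu}$ and $\overline{f}\in L^1$ whenever $f\in L^1$), and closure under multiplication follows from the separate $\tau$-continuity of multiplication on $L^\infty(X,\mu)$: first extend $\cA\cdot\cA\subseteq\cA$ to $\cA\cdot\cB\subseteq\cB$ by fixing a factor in $\cA$, then to $\cB\cdot\cB\subseteq\cB$ by fixing a factor in $\cB$. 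Norm-closedness is automatic because $\tau$ is weaker than the norm topology.

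Second, I would carry out an elementary Borel functional calculus inside $\cB$. Fix a real-valued $a\in\cA$ and $t\in\bR$, and approximate $\chi_{(t,+\infty)}\colon\bR\to\{0,1\}$ pointwise and boundedly by continuous functions $\phi_n$. Restricted to a compact interval containing the essential range of $a$, Weierstrass uniformly approximates each $\phi_n$ by polynomials, whence $\phi_n(a)$ lies in the norm-closed unital $\ast$-algebra $\cB$. Then $\phi_n(a)\to\chi_{\{a>t\}}$ pointwise and boundedly on $X$, and by the dominated-convergence remark recalled before the statement, $\phi_n(a)\xrightarrow{\tau}\chi_{\{a>t\}}$, giving $\chi_{\{a>t\}}\in\cB$.

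Third, and this is the main obstacle, I would show that the sets $\{a>t\}$ with real $a\in\cA$ and $t\in\bR$ generate the Borel $\sigma$-algebra of $X$. Given an open set $U\subseteq X$, use Urysohn in the metric space $X$ to pick a real $h\in C_b(X)$ with $U=\{h>0\}$; exhaust $X$ by nested compact sets $K_n$ with $K_n\subseteq K_{n+1}$ and $\bigcup_n K_n=X$, and, applying the classical Stone--Weierstrass theorem to the real self-adjoint unital point-separating subalgebra $\cA|_{K_n}\subseteq C(K_n)$, choose real $a_n\in\cA$ with $\|a_n-h\|_{C(K_n)}<1/n$. Since every $x\in X$ eventually lies in $K_n$, the sequence $(a_n)$ converges pointwise to $h$ on $X$, so $h$, and hence $U=\{h>0\}$, is $\sigma(\cA)$-measurable. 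The class $\{E\subseteq X\colon\chi_E\in\cB\}$ is itself a $\sigma$-algebra (closure under complement is trivial; closure under countable monotone unions follows from the same bounded-pointwise-to-$\tau$ remark), so it contains every Borel set.

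Finally, every bounded Borel function is a uniform limit of simple Borel functions, so it belongs to $\cB$, which gives $\cB\supseteq L^\infty(X,\mu)$; the reverse inclusion is trivial. The real work is concentrated in the third step, where $\sigma$-compactness of $X$ and its metric structure are what let us globalize the local Stone--Weierstrass approximations into the identity $\sigma(\cA)=\mathrm{Borel}(X)$.
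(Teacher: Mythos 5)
Your proof is correct, and it takes a genuinely different route from the paper. The paper's proof is structured around restriction to compacta: for each compact $Y\subseteq X$, it first uses classical Stone--Weierstrass to show that $\cA|_Y$ is uniformly dense in $C(Y)$, then proves the key auxiliary fact $1_Y\in\cW\eqdef\clos_\tau(\cA)$ via Urysohn and a clipping $h_n=\min\{|g_n|,1\}$, then establishes a closedness lemma for $\cW_Y$, invokes Proposition~\ref{prop:C_weaklydense_in_Linfty} (weak-* density of $C(Y)$ in $L^\infty(Y,\mu_Y)$) to get $\cW_Y=L^\infty(Y,\mu_Y)$, and finally passes to the limit over the compact exhaustion. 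Your argument instead works globally and avoids Proposition~\ref{prop:C_weaklydense_in_Linfty} altogether: you observe that $\cB=\clos_\tau(\cA)$ is a unital norm-closed $\ast$-subalgebra of $L^\infty(X,\mu)$ (via separate weak-* continuity of multiplication), run an elementary functional calculus inside this norm-closed algebra to obtain $\chi_{\{a>t\}}\in\cB$, then show that $\{E\colon\chi_E\in\cB\}$ is a $\sigma$-algebra containing every open set (here Urysohn and Stone--Weierstrass on the $K_n$ are used to realize $\chi_U$ as a Borel function of $\cA$-elements), and conclude by norm-density of simple functions. Both proofs use the same three ingredients --- Stone--Weierstrass on compacts, $\sigma$-compactness, and the bounded-pointwise-to-$\tau$ convergence remark --- but the paper leans on the compact-case proposition and cleverly exploits the idempotents $1_Y$, while your version is self-contained and makes the passage from continuous functions to characteristic functions explicit through the monotone-class step. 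A minor stylistic point: one should note that $\{E\colon\chi_E\in\cB\}$ is closed under finite intersections because $\cB$ is an algebra ($\chi_{E\cap F}=\chi_E\chi_F$); combined with complements and monotone unions this gives a genuine $\sigma$-algebra, which you implicitly use but could state.
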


\begin{proof}
We denote $\clos_\tau(\cA)$ by $\cW$.
Let $(K_n)_{n\in\bN}$ be an increasing compact covering of $X$.
In the steps 1--4 of the proof,
$Y$ is an arbitrary compact subset of $X$.
We denote by $\mu_Y$ the restriction of $\mu$,
and by $\cA_Y$ and $\cW_Y$ the ``restrictions''
of the algebras $\cA$ and $\cW$, respectively:
\[
\cA_Y\eqdef\{f|_Y\colon\ f\in\cA\},\qquad
\cW_Y\eqdef\{f|_Y\colon\ f\in\cW\}.
\]

Step 1. $\cA_Y$ is a self-adjoint unital subalgebra of $C(Y)$
that separates points of $Y$.
So, by the Stone--Weierstrass theorem,
$\cA_Y$ is dense in $C(Y)$
with respect to the uniform topology.

Step 2. We will prove that $1_Y\in\cW$.
For every $n$ in $\bN$, put
\[
Z_n\eqdef\{x\in K_n\colon\ d(x,Y)\ge 1/n\}.
\]
Using Urysohn's lemma,
choose $f_n\in C(K_n,[0,1])$ such that
\[
f_n(y)=1\quad(y\in Y\cap K_n),\qquad
f_n(x)=0\quad(x\in Z_n).
\]
By Step 1, applied to $K_n$ instead of $Y$, 
we find $g_n$ in $\cA$ such that
$\|g_n|_{K_n}-f_n\|<1/n$.
Put
\[
h_n(x)\eqdef\min\{|g_n(x)|,1\}
=\frac{|g_n(x)|+1-||g_n(x)|-1|}{2}.
\]
Then $h_n$ belong
to the unital C*-algebra generated by $g_n$;
in particular, $h_n\in\cW$.
It is easy to verify that the sequence
$(h_n)_{n\in\bN}$ is bounded in the uniform norm
and converges pointwise to $1_Y$.
Therefore  $h_n\xrightarrow{\tau}1_Y$ and $1_Y\in\cW$.

Step~3. We will prove that $\cW_Y$ is a $\tau_Y$-closed subset of $L^\infty(Y,\mu_Y)$.
Let $(f_j)_{j\in J}$ be a net in $\cW_Y$
that $\tau_Y$-converges to $g\in L^\infty(Y,\mu_Y)$.
Choose $u_j\in\cW$ such that $u_j|_Y=f_j$,
put $v_j=u_j 1_Y$,
and denote by $h$ the extension by zero of the function $g$
to the domain $X$.
Then, by Step~2, $v_j\in\cW$. The assumption $f_j\xrightarrow{\tau_Y}g$ implies that  $v_j\xrightarrow{\tau_X}h$. Therefore $h\in\cW$ and $g=h|_Y\in\cW_Y$.

Step~4. We will prove that $\cW_Y=L^\infty(Y,\mu_Y)$.
Combining Step~1 with Proposition~\ref{prop:C_weaklydense_in_Linfty}
we see that $\clos_{\tau_Y}(\cA_Y)=L^\infty(Y,\mu_Y)$.
Since $\clos_{\tau_Y}(\cA_Y)\subseteq\clos_{\tau_Y}(\cW_Y)=\cW_Y$,
we conclude that $\cW_Y=L^\infty(Y,\mu_Y)$.

Step 5. Let $f\in L^\infty(X,\mu)$.
For every $n$ in $\bN$,
applying the result of Step~4 to the compact $Y=K_n$,
find $g_n$ in $\cW$ such that $g_n|_{K_n}=f|_{K_n}$.
By Step~2, $g_n 1_{K_n}\in\cW$, i.e., $f1_{K_n}\in\cW$.
The sequence $(f 1_{K_n})_{n\in\bN}$ is bounded in the uniform norm and converges pointwise to $f$.
Therefore it converges to $f$ in the topology $\tau_X$,
and $f\in\cW$.
\end{proof}

\section[Criterion for commutativity of a direct integral of W*-algebras]{Criterion for commutativity of a direct integral\\of W*-algebras}
\label{sec:commutativity_of_direct_integral}

For the definition and some properties of direct integrals see, for example,
Dixmier~\cite[Part II, Chapters 1--3]{Dixmier1981vonNeumann},
Folland~\cite[Section~7.4]{Folland2016harmonic}, and 
Takesaki~\cite[Section~4.8]{Takesaki2002}.
In this section,
we assume that 
$(\Omega,\mu)$ is a $\sigma$-finite measure space and
$(H_\xi)_{\xi\in\Omega}$ is a measurable field of non-zero separable Hilbert spaces.
By definition, this concept requires the existence of a ``fundamental sequence of  measurable vector fields'' $(g_j)_{j\in\bN}$ such that for every $\xi$ in $\Omega$, the sequence
$(g_j(\xi))_{j\in\bN}$ is complete in $H_\xi$,
and for every $j,k$ in $\bN$,
the function $\xi\mapsto\langle g_j(\xi),g_k(\xi)\rangle_{H_\xi}$ is measurable.

The following fact about the existence of a ``measurable field of orthonormal bases'' uses the Gram--Schmidt orthogonalization;
see detailed proofs in~\cite[Part~II, Chapter~1, Section~2, Lemma~1]{Dixmier1981vonNeumann},
\cite[Proposition~7.19]{Folland2016harmonic}, and~\cite[Lemma~8.12]{Takesaki2002}.

\begin{prop}
\label{prop:basis_fields}
Let $(H_\xi)_{\xi\in\Omega}$, $(g_j)_{j\in\bN}$
be a measurable field of non-zero separable Hilbert spaces, with dimensions $d_\xi\eqdef\dim(H_\xi)\in\bN\cup\{\infty\}$.
Then $\{\xi\in\Omega\colon\ d_\xi=m\}$ is measurable for every $m$ in $\bN\cup\{\infty\}$.
Moreover, there exists a sequence $(b_j)_{j\in\bN}$
of vector fields with the following properties:
\begin{itemize}
\item for each $\xi\in\Omega$,
$(b_j(\xi))_{j=1}^{d_\xi}$ is an orthonormal basis for $H_\xi$, and $b_j(\xi)=0$ for $j>\dim(H_\xi)$;
\item for each $j$ in $\bN$, there is a measurable partition of $\Omega$, $\Omega=\cup_{k=1}^\infty \Omega_{j,k}$,
such that on each $\Omega_{j,k}$, $b_j(\xi)$ is a finite linear combination of the family $(g_k(\xi))_{k\in\bN}$, with coefficients depending measurably on $\xi$.
\end{itemize}
\end{prop}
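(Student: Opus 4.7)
The plan is to prove both claims via a measurable Gram--Schmidt procedure applied to the fundamental sequence $(g_j)_{j\in\bN}$. The starting observation is that for each $n$, the Gram matrix $\Gamma_n(\xi)\eqdef(\langle g_i(\xi),g_j(\xi)\rangle_{H_\xi})_{i,j=1}^n$ has measurable entries, so the function $\xi\mapsto\operatorname{rank}\Gamma_n(\xi)$ is measurable (the condition $\operatorname{rank}\Gamma_n(\xi)\ge r$ is equivalent to the non-vanishing of some $r\times r$ minor). Since $(g_j(\xi))_{j\in\bN}$ is complete in $H_\xi$, we have $d_\xi=\sup_n\operatorname{rank}\Gamma_n(\xi)$, and hence $\{\xi\in\Omega:d_\xi=m\}$ is measurable for every $m$ in $\bN\cup\{\infty\}$, settling the first claim.

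Next, I call an index $k$ a \emph{pivot at $\xi$} when $g_k(\xi)\notin\linspan\{g_i(\xi):i<k\}$, equivalently $\operatorname{rank}\Gamma_k(\xi)>\operatorname{rank}\Gamma_{k-1}(\xi)$ (with $\operatorname{rank}\Gamma_0\eqdef 0$); this defines a measurable subset $P_k$ of $\Omega$. Listing the pivots in increasing order produces measurable partial functions $n_j\colon\{\xi:d_\xi\ge j\}\to\bN$, where $n_j(\xi)$ is the index of the $j$-th pivot at $\xi$. On $\{\xi:d_\xi\ge j\}$, I define recursively
\begin{equation*}
\tilde b_j(\xi)\eqdef g_{n_j(\xi)}(\xi)-\sum_{i=1}^{j-1}\langle g_{n_j(\xi)}(\xi),b_i(\xi)\rangle_{H_\xi}\,b_i(\xi),\qquad b_j(\xi)\eqdef\frac{\tilde b_j(\xi)}{\|\tilde b_j(\xi)\|_{H_\xi}},
\end{equation*}
and I set $b_j(\xi)\eqdef 0$ on $\{\xi:d_\xi<j\}$. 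The pivot condition guarantees $\tilde b_j(\xi)\neq 0$ wherever the formula is applied, and a standard induction using the Gram--Schmidt recurrence shows that $(b_j(\xi))_{j\le d_\xi}$ is an orthonormal basis of $H_\xi$. The required partition of $\Omega$ is then obtained by splitting according to the value of $n_j(\xi)$ together with one piece for $\{\xi:d_\xi<j\}$ (on which $b_j(\xi)=0$ is a trivial linear combination).

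The main obstacle is organizing this into a genuinely measurable construction, which I handle by induction on $j$. On the set $\{n_j=k\}$, by the inductive hypothesis each $b_i(\xi)$ with $i<j$ is a finite linear combination of $g_1(\xi),\dots,g_{k-1}(\xi)$ whose coefficients are measurable scalar functions of $\xi$; the Gram--Schmidt formula then exhibits $\tilde b_j(\xi)$ as a linear combination of $g_1(\xi),\dots,g_k(\xi)$ whose coefficients are rational expressions in the measurable entries of $\Gamma_k(\xi)$, and dividing by the non-vanishing measurable scalar $\|\tilde b_j(\xi)\|_{H_\xi}$ yields the required representation of $b_j(\xi)$. Measurability of the vector field $b_j$ follows since the class of measurable fields is stable under finite linear combinations with measurable scalar coefficients, normalization by a non-vanishing measurable scalar, and gluing along a countable measurable partition.
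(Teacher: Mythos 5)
Your proof is correct and takes exactly the Gram--Schmidt approach the paper alludes to: the paper does not prove Proposition~\ref{prop:basis_fields} itself but cites Dixmier, Folland, and Takesaki, noting only that the argument ``uses the Gram--Schmidt orthogonalization,'' and your measurable pivot-selection followed by Gram--Schmidt and normalization is precisely the content of those standard proofs.
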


We consider the following direct integral of W*-algebras:
\begin{equation}\label{eq:direct_integral}
\cA\eqdef
\int^{\oplus}_\Omega
\cB(H_\xi)\,\dif\mu(\xi).
\end{equation}
Recall that if $S\in\cA$ and
\[
S = \int^{\oplus}_\Omega S(\xi)\,\dif\mu(\xi),
\]
then the norm of $S$ coincides with the essential supremum of the function $\xi\mapsto\|S(\xi)\|_{\cB(H_\xi)}$:
\[
\|S\| \eqdef
\esssup_{\xi,\mu} \|S(\xi)\|_{\cB(H_\xi)}.
\]
In particular, this means that $S=0$ if and only if the equality $S(\xi)=0$ holds for $\mu$-almost all points $\xi$.

\begin{prop}
\label{prop:criterion_commutativity_direct_integral}
$\cA$ is commutative if and only if
$\mu(\Omega_2)=0$, where
\[
\Omega_2 \eqdef
\{\xi\in\Omega\colon \dim(H_\xi)\ge 2\}.
\]
\end{prop}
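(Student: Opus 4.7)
The plan is to prove the two implications separately. The backward implication is immediate: if $\mu(\Omega_2)=0$, then for $\mu$-almost every $\xi$ we have $\dim(H_\xi)=1$ (the fibers are non-zero by hypothesis), so each $\cB(H_\xi)$ is isomorphic to $\bC$ and hence commutative. Then for any $S,T\in\cA$ the fiberwise equalities $S(\xi)T(\xi)=T(\xi)S(\xi)$ hold $\mu$-a.e., which, by the fact that a decomposable operator is zero iff almost all its fibers are zero, yields $ST=TS$ in $\cA$.

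For the forward implication I would argue by contraposition: assuming $\mu(\Omega_2)>0$, I will exhibit two elements of $\cA$ that do not commute. First, the set $\Omega_2$ is measurable by Proposition~\ref{prop:basis_fields}, and by $\sigma$-finiteness of $\mu$ I can choose a measurable subset $\Omega_2'\subseteq\Omega_2$ with $0<\mu(\Omega_2')<\infty$. Proposition~\ref{prop:basis_fields} also provides measurable vector fields $b_1,b_2$ such that $(b_1(\xi),b_2(\xi))$ is orthonormal for every $\xi\in\Omega_2$. I then define the rank-one operator fields
\[
S(\xi)\eqdef 1_{\Omega_2'}(\xi)\,\langle\cdot,b_2(\xi)\rangle_{H_\xi}\,b_1(\xi),\qquad
T(\xi)\eqdef 1_{\Omega_2'}(\xi)\,\langle\cdot,b_1(\xi)\rangle_{H_\xi}\,b_2(\xi),
\]
both of operator norm at most $1$. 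A direct computation on $\Omega_2'$ gives
\[
S(\xi)T(\xi)=\langle\cdot,b_1(\xi)\rangle_{H_\xi}\,b_1(\xi),\qquad
T(\xi)S(\xi)=\langle\cdot,b_2(\xi)\rangle_{H_\xi}\,b_2(\xi),
\]
which are the orthogonal projections onto the two distinct one-dimensional subspaces $\bC b_1(\xi)$ and $\bC b_2(\xi)$. Since $\mu(\Omega_2')>0$, this forces $ST\ne TS$ in $\cA$, contradicting commutativity.

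The main obstacle will be justifying that $S$ and $T$ are genuine elements of the direct integral, i.e., measurable fields of bounded operators. This reduces to showing that for any measurable vector field $w$, the field $\xi\mapsto\langle w(\xi),b_i(\xi)\rangle_{H_\xi}\,b_j(\xi)$ is measurable, which follows from the measurability of $b_1,b_2$ supplied by Proposition~\ref{prop:basis_fields} together with the standard fact that pointwise inner products of measurable vector fields define measurable scalar functions and that scalar multiples of measurable vector fields are measurable. Uniform boundedness of the fiber norms then places $S,T$ in $\cA$, and the argument above completes the proof.
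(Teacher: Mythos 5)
Your proof is correct and takes essentially the same approach as the paper: the backward implication follows because the fibers are one-dimensional (hence commutative) a.e., and the forward implication is proved by contraposition using the same pair of rank-one operator fields $\langle\cdot,b_1(\xi)\rangle\,b_2(\xi)$ and $\langle\cdot,b_2(\xi)\rangle\,b_1(\xi)$ built from the measurable orthonormal field of Proposition~\ref{prop:basis_fields}. The only cosmetic differences are that you pass to a finite-measure subset $\Omega_2'$ (unnecessary, since boundedness of the decomposable operators depends only on the essential supremum of the fiber norms, not on $\mu(\Omega_2)$ being finite) and you compute the products $ST$, $TS$ directly rather than via $2\times 2$ matrix representations as the paper does; neither affects the substance.
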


\begin{proof}
Let $\Omega_1\eqdef\{\xi\in\Omega\colon \dim(H_\xi)=1\}$.
For every $\xi$ in $\Omega_1$,
we have $\dim(H_\xi)=1$,
and $\cB(H_\xi)$ is commutative.

1. Suppose that $\mu(\Omega_2)=0$.
Given $S_1,S_2$ in $\cA$,
the operators $(S_1 S_2)(\xi)$ and $(S_2 S_1)(\xi)$ coincide
for every $\xi$ in $\Omega_1$,
which implies that $S_1 S_2=S_2 S_1$.
So, in this case, $\cA$ is commutative.

2. Suppose that $\mu(\Omega_2)>0$.
We are going to prove that $\cA$ is not commutative.
Let $(b_j)_{j\in\bN}$ be a sequence like in Proposition~\ref{prop:basis_fields}.
In particular, for every $\xi$ in $\Omega_2$, the vectors $b_1(\xi)$ and $b_2(\xi)$ are orthonormal.
Given
\[
f = (f(\xi))_{\xi\in\Omega}
\in\int^{\oplus}_\Omega H_\xi\,\dif\mu(\xi),
\]
we define $S_1 f$ and $S_2 f$ by
\begin{align*}
(S_1 f)(\xi)
&\eqdef
\begin{cases}
\langle f(\xi),b_1(\xi)\rangle b_2(\xi), & \xi\in\Omega_2, \\
0, & \xi\in\Omega_1;
\end{cases}
\\[1ex]
(S_2 f)(\xi)
&\eqdef
\begin{cases}
\langle f(\xi),b_2(\xi)\rangle b_1(\xi), & \xi\in\Omega_2, \\
0, & \xi\in\Omega_1.
\end{cases}
\end{align*}
It is easy to see that $S_1,S_2\in\cA$.
For every $\xi$ in $\Omega_2$,
the restrictions of the operators $S_1(\xi)$ and $S_2(\xi)$ to $\linspan(b_1(\xi),b_2(\xi))$
have the following matrices
with respect to the orthonormal basis $b_1(\xi),b_2(\xi)$:
\[
\begin{bmatrix}
0 & 0 \\ 1 & 0
\end{bmatrix},\qquad
\begin{bmatrix}
0 & 1 \\ 0 & 0
\end{bmatrix}.
\]
In particular,
$\|(S_1 S_2 - S_2 S_1)(\xi)\|_{\cB(H_\xi)} = 1$
for every $\xi$ in $\Omega_2$,
and $S_1 S_2\ne S_2 S_1$.
\end{proof}

\section{\texorpdfstring{Translation-invariant operators in $\boldsymbol{L^2(G\times Y)}$}{Translation-invariant operators in L2(GxY)}}
\label{sec:invar_L2}

Let us recall some well-known concepts and facts related to the translation operators
and to the Fourier transform on LCAG \cite{Folland2016harmonic,HewittRoss1979}.
In this section, we accept the following assumption about $G$ and $Y$.

\begin{assumption}\label{assumption:GY}
Let $G$ be a locally compact abelian group (LCAG) with a Haar measure $\nu$,
and $Y$ be a measure space with a measure $\la$.
We suppose that $G$ is $\sigma$-compact and metrizable, $\la$ is $\sigma$-finite,
and the spaces $L^2(G,\mu)$ and $L^2(Y,\la)$ are separable.
The cartesian product $G\times Y$ is considered with the product measure $\nu\times\la$.
\end{assumption}

We denote by $\hG$ the dual group of $G$.
The conditions on $G$ imply that
$\hG$ is also $\sigma$-compact and metrizable;
see, for example,
\cite[(24.48)]{HewittRoss1979}.
Let $\nu$ be a Haar measure on $G$.
The Fourier transform of a function $f$ in $L^1(G)$ is defined by
\[
(F_1 f)(\xi) \eqdef \int_G \overline{\xi(x)}\,f(x)\,\dif\nu(x)\qquad(\xi\in\hG).
\]
Let $\hnu$ be the dual Haar measure on $\hG$,
such that $\|F_1 f\|_{L^2(\hG,\hnu)}=\|f\|_{L^2(G)}$
for every $f$ in $L^1(G)\cap L^2(G)$.
We write $L^p(\hG)$ instead of $L^p(\hG,\hnu)$
and denote by $F$ the Fourier--Plancherel transform which coincides with $F_1$ on $L^1(G)\cap L^2(G)$

Given $a$ in $G$, we denote by $\rho_G(a)$
the translation operator acting in $L^2(G)$ by the rule
\[
(\rho_G(a) f)(x)\eqdef f(x-a).
\]
Given $a$ in $G$, we denote by $E_a$
the function $\hG\to\bC$ defined by $E_a(\xi)\eqdef\xi(a)$.
Let $\rho_{\hG}(a)$ be the operator of multiplication by $E_{-a}$:
\begin{equation}\label{eq:rho_hG_def}
\rho_{\hG}(a)\eqdef M_{E_{-a}}.
\end{equation}
It is well known and easy to see that
$(\rho_G,L^2(G))$ and $(\rho_{\hG},L^2(\hG))$
are (strongly continuous) unitary representations of $G$,
and the Fourier--Plancherel transform intertwines them:
\begin{equation}\label{eq:shift_and_mul_by_character}
F \rho_G(a) F^\ast = \rho_{\hG}(a).
\end{equation}
We shortly denote by $\rho_{\hG}'$
the centralizer of the set $\{\rho_{\hG}(a)\colon\ a\in G\}$.
A similar notation is used through the paper also for other unitary representations.

The following proposition describes
the operators acting in $\cB(L^2(\hG))$
and commuting with the multiplications by characters of $\hG$.

\begin{prop}\label{prop:commuting_with_mul_by_characters}
$W^\ast(\{E_a\colon a\in G\})=L^\infty(\hG)$, and
$\rho_{\hG}'=\cM_{\hG}$.
\end{prop}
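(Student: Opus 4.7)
The plan is to apply Theorem~\ref{thm:SW_for_Wstar_alg} to the algebra generated by the characters $E_a$ inside $C_b(\hG)$, and then transfer the conclusion to multiplication operators via the isometric $W^\ast$-algebra isomorphism $b\mapsto M_b$ between $L^\infty(\hG)$ and $\cM_{\hG}$.

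First I would observe that each $E_a$ belongs to $C_b(\hG)$ with $\|E_a\|_\infty=1$, that $E_0=1_{\hG}$, $\overline{E_a}=E_{-a}$, and $E_a E_b=E_{a+b}$. Consequently, the linear span $\cA$ of $\{E_a\colon a\in G\}$ is a self-adjoint unital subalgebra of $C_b(\hG)$. By Pontryagin duality, the characters separate points of $\hG$: if $\xi_1\ne\xi_2$, then $\xi_1\xi_2^{-1}$ is a non-trivial character of $G$, so there is $a\in G$ with $E_a(\xi_1)=\xi_1(a)\ne\xi_2(a)=E_a(\xi_2)$. Assumption~\ref{assumption:GY}, together with the properties of $\hG$ and $\hnu$ recalled in the paragraph preceding the proposition, guarantees that $\hG$ is a locally compact $\sigma$-compact metric space and that $\hnu$ is a Radon measure with full support. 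Hence Theorem~\ref{thm:SW_for_Wstar_alg} yields $\clos_\tau(\cA)=L^\infty(\hG)$. Since the unital $\ast$-algebra generated by $\{E_a\}$ inside $L^\infty(\hG)$ is precisely $\cA$, this gives $W^\ast(\{E_a\colon a\in G\})=L^\infty(\hG)$.

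For the second equality, I would use that $b\mapsto M_b$ sends $\tau$-convergence to $\WOT$-convergence and $L^\infty(\hG)$ onto $\cM_{\hG}$ as $W^\ast$-algebras. The inclusion $\cM_{\hG}\subseteq\rho_{\hG}'$ is immediate: by \eqref{eq:rho_hG_def} we have $\rho_{\hG}(a)=M_{E_{-a}}\in\cM_{\hG}$, and $\cM_{\hG}$ is commutative, so \eqref{eq:multiplication_centralizer} gives $\cM_{\hG}=\cM_{\hG}'\subseteq\{\rho_{\hG}(a)\colon a\in G\}'=\rho_{\hG}'$. Conversely, if $T\in\rho_{\hG}'$, then $T$ commutes with every $M_{E_{-a}}$, hence with the $\WOT$-closed unital $\ast$-algebra they generate; transferring the first part of the proposition through the isomorphism $b\mapsto M_b$, this algebra equals $\cM_{\hG}$, so $T\in\cM_{\hG}'=\cM_{\hG}$.

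The only real subtlety is checking that the hypotheses of Theorem~\ref{thm:SW_for_Wstar_alg} are verified in this setting; once the point-separation, self-adjointness, and metrizability/$\sigma$-compactness of $\hG$ are in place, the rest is a direct translation through the correspondence between $L^\infty(\hG)$ and $\cM_{\hG}$.
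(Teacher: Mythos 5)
Your proof is correct and follows essentially the same route as the paper: apply Theorem~\ref{thm:SW_for_Wstar_alg} to the self-adjoint unital separating algebra spanned by the characters $E_a$ to get $W^\ast(\{E_a\})=L^\infty(\hG)$, then transfer through $b\mapsto M_b$ and use $\cM_{\hG}'=\cM_{\hG}$ (formula~\eqref{eq:multiplication_centralizer}) together with the bicommutant to get $\rho_{\hG}'=\cM_{\hG}$. The paper's proof is just more terse, citing~\cite[Theorem (22.17)]{HewittRoss1979} for the point-separation fact that you spell out via Pontryagin duality.
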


\begin{proof}
The first statement follows from Theorem~\ref{thm:SW_for_Wstar_alg}
and the fact that the set $\{E_{-a}\colon\ a\in G\}$
separates the points of $\hG$
(see, for example, \cite[Theorem (22.17)]{HewittRoss1979}).
The second statement is a consequence of formula~\eqref{eq:multiplication_centralizer}.
\end{proof}

An operator $A$ of the class $\cB(L^2(G))$
is called a \emph{multiplier} of $L^2(G)$
if $A$ commutes with $\rho_G(a)$ for every $a$ in $G$.
The next proposition,
being an equivalent form
of Proposition~\ref{prop:commuting_with_mul_by_characters},
means that the Fourier--Plancherel transform converts every multiplier of $L^2(G)$ into a multiplication operator in $L^2(\hG)$.
See Larsen~\cite[proof of Theorem~4.1.1]{Larsen1971}
for a more constructive proof.

\begin{prop}\label{prop:multipliers}
$F \rho_G' F^\ast = \cM_{\hG}$.
\end{prop}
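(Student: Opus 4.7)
The plan is to reduce this statement to Proposition~\ref{prop:commuting_with_mul_by_characters} via the intertwining relation~\eqref{eq:shift_and_mul_by_character}. The key observation is that if $U\colon H_1\to H_2$ is any unitary between Hilbert spaces and $\cS\subset\cB(H_1)$, then conjugation by $U$ is a $\ast$-isomorphism $\cB(H_1)\to\cB(H_2)$, and it maps the centralizer $\cS'$ onto the centralizer of $U\cS U^\ast$. I would state this (or just use it tacitly) and then apply it with $U=F$, $\cS=\{\rho_G(a):a\in G\}$.

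First, I would observe that by \eqref{eq:shift_and_mul_by_character}, conjugation by $F$ sends the set $\{\rho_G(a):a\in G\}$ onto the set $\{\rho_{\hG}(a):a\in G\}$. Consequently,
\[
F \rho_G' F^\ast
= F\{\rho_G(a):a\in G\}' F^\ast
= (F\{\rho_G(a):a\in G\} F^\ast)'
= \{\rho_{\hG}(a):a\in G\}'
= \rho_{\hG}'.
\]
The middle equality is the general fact about conjugation of commutants by a unitary, which is routine: if $T$ commutes with every $\rho_G(a)$, then $FTF^\ast$ commutes with every $F\rho_G(a)F^\ast=\rho_{\hG}(a)$, and conversely.

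Second, I would invoke Proposition~\ref{prop:commuting_with_mul_by_characters}, which already identifies $\rho_{\hG}'$ with $\cM_{\hG}$, to conclude $F\rho_G' F^\ast=\cM_{\hG}$.

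There is no real obstacle here; the proposition is essentially a reformulation of the preceding one after transporting through the Fourier--Plancherel unitary. The only thing worth making explicit is the standard fact that conjugation by a unitary commutes with taking commutants, which is immediate from the definition. The whole proof fits in a couple of lines.
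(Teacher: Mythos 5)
Your proof is correct and matches the paper's intended reasoning: the paper explicitly presents Proposition~\ref{prop:multipliers} as ``an equivalent form'' of Proposition~\ref{prop:commuting_with_mul_by_characters}, obtained by transporting through the Fourier--Plancherel unitary via~\eqref{eq:shift_and_mul_by_character}, which is precisely your argument. The observation that conjugation by a unitary commutes with taking commutants is the right key point and is exactly what makes the reduction work.
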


Here $F \rho_G' F^\ast$ is a short notation for
$\{FAF^\ast\in\cB(L^2(G))\colon\ \forall a\in G\quad
\rho_G(a) A = A \rho_G(a)\}$.

\begin{cor}\label{cor:commuting_with_mul_by_characters}
Let $\Omega$ be a measurable subset of $\hG$
and let $A\in\cB(L^2(\Omega))$.
Suppose that $A$ commutes with the multiplications
by all characters of $\hG$ restricted to $\Omega$:
\[
\forall a\in G\qquad A M_{E_a|_\Omega} = M_{E_a|_\Omega} A.
\]
Then $A\in\cM_{\Omega}$,
i.e., there exists $b$ in $L^\infty(\Omega)$ such that $A=M_b$.
\end{cor}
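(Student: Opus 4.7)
The plan is to reduce the corollary to Proposition~\ref{prop:commuting_with_mul_by_characters} by lifting $A$ to an operator on the full space $L^2(\hG)$. Identify $L^2(\Omega)$ with the closed subspace of $L^2(\hG)$ consisting of functions vanishing $\hnu$-almost everywhere off $\Omega$, and let $P\eqdef M_{1_\Omega}\in\cB(L^2(\hG))$ be the corresponding orthogonal projection. Define
\[
\tilde A \eqdef \iota A P,
\]
where $\iota\colon L^2(\Omega)\hookrightarrow L^2(\hG)$ is the natural inclusion. Then $\tilde A\in\cB(L^2(\hG))$ and $\|\tilde A\|=\|A\|$.

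Next, I would verify that $\tilde A$ belongs to $\rho_{\hG}'$. For each $a\in G$ the operator $\rho_{\hG}(a)=M_{E_{-a}}$ is a multiplication operator, hence commutes with the multiplication operator $P=M_{1_\Omega}$, and its restriction to $L^2(\Omega)$ is precisely $M_{E_{-a}|_\Omega}$. Since the family $\{E_a\colon a\in G\}$ coincides with $\{E_{-a}\colon a\in G\}$, the hypothesis on $A$ gives $AM_{E_{-a}|_\Omega}=M_{E_{-a}|_\Omega}A$. Combining these three facts yields
\[
\tilde A\,\rho_{\hG}(a)
= \iota A P M_{E_{-a}}
= \iota A M_{E_{-a}|_\Omega} P
= \iota M_{E_{-a}|_\Omega} A P
= M_{E_{-a}}\iota A P
= \rho_{\hG}(a)\,\tilde A.
\]

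Now apply Proposition~\ref{prop:commuting_with_mul_by_characters}: there exists $c\in L^\infty(\hG)$ with $\tilde A=M_c$. For any $f\in L^2(\hG)$ supported off $\Omega$ we have $Pf=0$, so $M_c f=\tilde A f=0$; thus $c=0$ $\hnu$-a.e.\ on $\hG\setminus\Omega$. Setting $b\eqdef c|_\Omega\in L^\infty(\Omega)$, for every $f\in L^2(\Omega)$ we obtain $Af=\tilde A f=cf=bf$, i.e.\ $A=M_b$.

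I do not anticipate a serious obstacle; the whole argument is just a careful embedding of the problem into the setting of Proposition~\ref{prop:commuting_with_mul_by_characters}. The only point requiring minor care is the identification of $L^2(\Omega)$ with a subspace of $L^2(\hG)$ and the observation that the restriction of $M_{E_{-a}}$ to this subspace is $M_{E_{-a}|_\Omega}$, so that the commutation hypothesis transports correctly between the two settings.
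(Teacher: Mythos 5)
Your argument is correct and is essentially the same as the paper's: the paper also extends $A$ to an operator $B$ on $L^2(\hG)$ by restricting to $\Omega$, applying $A$, and extending by zero, then invokes Proposition~\ref{prop:commuting_with_mul_by_characters} and restricts the resulting symbol to $\Omega$. You simply spell out the commutation check and the vanishing of $c$ off $\Omega$ a bit more explicitly, which the paper leaves to the reader.
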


\begin{proof}
Define $B\colon L^2(\hG)\to L^2(\hG)$ by the following rule:
\[
(B f)(\xi) \eqdef
\begin{cases}
(A f|_\Omega)(\xi), & \xi\in\Omega;\\
0, & \xi\notin\Omega.
\end{cases}
\]
It is easy to see that $B M_{E_a}=M_{E_a} B$ for every $a$ in $G$.
By Proposition~\ref{prop:commuting_with_mul_by_characters},
there exists $b_1\in L^\infty(\hG)$ such that $B=M_{b_1}$.
Put $b=b_1|_\Omega$.
Then $A=M_b$.
\end{proof}

Now we pass to the domains $G\times Y$ and $\hG\times Y$, the spaces $L^2(G\times Y)$ and $L^2(\hG\times Y)$, and the natural unitary representations of $G$ in these spaces.
It is well known~\cite[Part~II, Chapter~1, Section~8, Proposition~11 and its Corollary]{Dixmier1981vonNeumann} that
\begin{equation}\label{eq:L2_product}
L^2(\hG\times Y)
=L^2(\hG) \otimes L^2(Y)
=\int_{\hG}^{\oplus}L^2(Y)\,\dif\hnu(\xi).
\end{equation}
Put $\rho_{G\times Y}(a)\eqdef \rho_{G}(a)\otimes I_{L^2(Y)}$ for each $a\in G$. More explicitly, $\rho_{G\times Y}$ is defined by~\eqref{eq:rho_GY_def}.
Then, $\rho_{G\times Y}$
is a unitary representation of $G$ in $L^2(G\times Y)$.
We are going to understand the structure of the centralizer $\rho_{G\times Y}'$.
The crucial role here is played by the operator $F\otimes I_{L^2(Y)}$ which we abbreviate as $F\otimes I$ and call ``the Fourier transform with respect to the first coordinate''.

For each $a$ in $G$, we put $\rho_{\hG\times Y}(a) \eqdef \rho_{\hG}(a)\otimes I_{L^2(Y)}$,
i.e.,
\begin{equation}
\label{eq:rho_hG_Y_explicit}
(\rho_{\hG\times Y}(a) g)(\xi, y)
=E_{-a}(\xi) g(\xi,y)
\qquad(a\in G,\ \xi\in\hG,\ y\in Y).
\end{equation}
Then $\rho_{\hG\times Y}$ is a unitary representation of $G$ in $L^2(\hG\times Y)$.
Formula~\eqref{eq:shift_and_mul_by_character} implies that $F\otimes I$ intertwines $\rho_{G\times Y}$ with $\rho_{\hG\times Y}$:
\begin{equation}
\label{eq:from_rho_GY_to_rho_hGY}
(F\otimes I)\rho_{G\times Y}(a)(F\otimes I)^\ast
=\rho_{\hG\times Y}(a).
\end{equation}

\begin{lem}
\label{lem:weak_convergence_and_tensor_products}
Let $H_1$ and $H_2$ be separable Hilbert spaces,
$(A_j)_{j\in J}$ be a net in $\cB(H_1)$,
and $B\in\cB(H_1)$.
Then $(A_j\otimes I_{H_2})_{j\in J}$ weakly converges to $B\otimes I_{H_2}$ if and only if $(A_j)_{j\in J}$ weakly converges to $B$.
\end{lem}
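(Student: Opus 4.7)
The plan is to prove the two implications separately, exploiting the explicit action of $A\otimes I_{H_2}$ on elementary tensors.

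For the ``only if'' direction, fix a unit vector $w\in H_2$ and, given arbitrary $x,y\in H_1$, consider the elementary tensors $x\otimes w,\,y\otimes w\in H_1\otimes H_2$. The definition of the inner product on the tensor product gives
\[
\langle(A_j\otimes I_{H_2})(x\otimes w),\,y\otimes w\rangle=\langle A_j x,y\rangle_{H_1}\|w\|^2=\langle A_j x,y\rangle_{H_1},
\]
and the analogous identity holds with $B$ in place of $A_j$. The WOT convergence of $A_j\otimes I_{H_2}$ at this pair of vectors therefore immediately yields $\langle A_j x,y\rangle\to\langle Bx,y\rangle$ for all $x,y\in H_1$.

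For the ``if'' direction, I would fix an orthonormal basis $(e_k)_{k\in K}$ of $H_2$ and use the canonical unitary identification $H_1\otimes H_2\cong\bigoplus_{k\in K}H_1$, under which an element $f$ is represented by the square-summable family $(u_k)_{k\in K}$ with $f=\sum_k u_k\otimes e_k$, and $A\otimes I_{H_2}$ acts as $(u_k)_k\mapsto(Au_k)_k$. Writing $g$ in the same way as $(v_k)_{k\in K}$, the inner product becomes
\[
\langle(A_j\otimes I_{H_2})f,g\rangle=\sum_{k\in K}\langle A_j u_k,v_k\rangle_{H_1}.
\]
Each individual summand converges to $\langle B u_k,v_k\rangle$ by the hypothesis $A_j\xrightarrow{\WOT}B$, so the entire problem reduces to justifying the interchange of the $j$-limit with this (possibly infinite) sum.

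The main obstacle is precisely this interchange. To handle it, I would first reduce to a norm-bounded net: since $\|A_j\otimes I_{H_2}\|=\|A_j\|$, one may replace $(A_j)_{j\in J}$ by the subnet obtained by intersecting the index set with a closed ball of sufficiently large radius, or, as is the case in all the applications of this lemma in Section~\ref{sec:invar_H}, invoke that the nets under consideration are bounded by construction. Once a uniform bound $\|A_j\|\le C$ is available, Cauchy--Schwarz gives $|\langle A_j u_k,v_k\rangle|\le C\|u_k\|\,\|v_k\|$ with $\sum_k C\|u_k\|\,\|v_k\|\le C\|f\|\,\|g\|<\infty$, and the dominated convergence theorem for sums indexed by $K$ lets me pass to the limit term-by-term, obtaining $\sum_k\langle Bu_k,v_k\rangle=\langle(B\otimes I_{H_2})f,g\rangle$. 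Since $f$ and $g$ were arbitrary, this completes the WOT convergence $A_j\otimes I_{H_2}\to B\otimes I_{H_2}$.
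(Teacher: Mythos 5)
Your ``only if'' direction coincides with the paper's argument: pair $x\otimes w$ with $y\otimes w$ for a fixed unit vector $w$. For the ``if'' direction you are considerably more careful than the paper, which merely asserts that the elementary-tensor identities ``yield immediately the sufficiency part.'' You correctly observe that those identities control $\langle(A_j\otimes I_{H_2})\phi,\psi\rangle$ only when $\phi,\psi$ are finite linear combinations of elementary tensors, and that passing to arbitrary $\phi,\psi\in H_1\otimes H_2$ requires interchanging the $j$-limit with an infinite sum, for which a uniform norm bound on $(A_j)_j$ is essential.

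Your proposed fix does not close this gap. Restricting the index set to $\{\,j\in J:\|A_j\|\le C\,\}$ need not produce a cofinal (hence directed) subset of $J$, so it need not define a subnet; and even if it did, convergence of a subnet of $(A_j\otimes I_{H_2})_j$ would not establish convergence of the whole net. In fact the lemma as stated is false without a boundedness hypothesis: take $H_1=H_2=\ell^2$ and $\phi=\psi=\sum_{k\ge1}k^{-1}e_k\otimes e_k$. Then the functional $A\mapsto\langle(A\otimes I_{H_2})\phi,\psi\rangle=\sum_{k\ge1}k^{-2}\langle Ae_k,e_k\rangle$ on $\cB(H_1)$ is ultraweakly continuous but not WOT-continuous, hence has WOT-dense kernel; a standard translation argument then yields a net $(A_j)_j$ WOT-converging to $0$ along which this functional is constant and nonzero, so $(A_j\otimes I_{H_2})_j$ fails to WOT-converge to $0$. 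Your second remark is the one that actually rescues the downstream use: in Lemma~\ref{lem:tensor_product_by_identity} only the WOT closure of a selfadjoint unital algebra is needed, which by the Kaplansky density theorem can be computed using norm-bounded nets, and for bounded nets your dominated-convergence argument is correct and complete. The cleanest repair is therefore to add a boundedness hypothesis to the statement of the lemma, as you suggest.
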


\begin{proof}
Given $f,g$ in $H_1$ and $u,v$ in $H_2$,
\begin{align*}
\langle (A_j\otimes I_{H_2})f\otimes u, g\otimes v\rangle_{H_1\otimes H_2}
&= \langle A_j f, g\rangle_{H_1}\,
\langle u,v\rangle_{H_2},
\\
\langle
(B\otimes I)f\otimes u,
g\otimes v
\rangle_{H_1\otimes H_2}
&=\langle Bf,g\rangle_{H_1}\,\langle u,v
\rangle_{H_2}.
\end{align*}
These identities yield immediately the sufficiency part.
For the necessity part, we take $u$ and $v$ to be the same normalized vector in $H_2$.
\end{proof}

\begin{lem}
\label{lem:tensor_product_by_identity}
Let $H_1$ and $H_2$ be separable Hilbert spaces, and $S$ be a selfadjoint subset of $\cB(H_1)$. Then
\[
W^\ast(\{A\otimes I_{H_2}\colon A\in S\})
= W^\ast(S)\otimes
(\bC I_{H_2}).
\]
\end{lem}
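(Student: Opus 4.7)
The plan is to realize both sides as the WOT-closure of parallel $\ast$-algebras and then transfer that closure across the map $A\mapsto A\otimes I_{H_2}$ using Lemma~\ref{lem:weak_convergence_and_tensor_products}. Let $\cA_0$ denote the unital $\ast$-subalgebra of $\cB(H_1)$ generated by $S$ (which is a genuine $\ast$-algebra because $S$ is selfadjoint), and set $\cB_0\eqdef\{A\otimes I_{H_2}\colon A\in\cA_0\}$. Since $A\mapsto A\otimes I_{H_2}$ is a unital $\ast$-homomorphism, $\cB_0$ is precisely the unital $\ast$-subalgebra of $\cB(H_1\otimes H_2)$ generated by $\{A\otimes I_{H_2}\colon A\in S\}$. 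By the von Neumann density theorem, $W^\ast(S)=\clos_{\WOT}(\cA_0)$ and $W^\ast(\{A\otimes I_{H_2}\colon A\in S\})=\clos_{\WOT}(\cB_0)$, so it suffices to verify $\clos_{\WOT}(\cB_0)=\{T\otimes I_{H_2}\colon T\in\clos_{\WOT}(\cA_0)\}$.

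For the inclusion $\supseteq$, given $T\in\clos_{\WOT}(\cA_0)$, I would pick a net $(A_j)$ in $\cA_0$ with $A_j\xrightarrow{\WOT}T$; the ``if'' direction of Lemma~\ref{lem:weak_convergence_and_tensor_products} yields $A_j\otimes I_{H_2}\xrightarrow{\WOT}T\otimes I_{H_2}$, and since each $A_j\otimes I_{H_2}\in\cB_0$, the limit lies in $\clos_{\WOT}(\cB_0)$. For the reverse inclusion, I would introduce $\mathcal{N}\eqdef\{T\otimes I_{H_2}\colon T\in\cB(H_1)\}$, which is a von Neumann subalgebra of $\cB(H_1\otimes H_2)$ (equal to the commutant $(\bC I_{H_1}\otimes\cB(H_2))'$) and in particular WOT-closed. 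Since $\cB_0\subseteq\mathcal{N}$, any $R\in\clos_{\WOT}(\cB_0)$ has the form $R=T\otimes I_{H_2}$ for some $T\in\cB(H_1)$; an approximating net $(A_j)$ in $\cA_0$ with $A_j\otimes I_{H_2}\xrightarrow{\WOT}T\otimes I_{H_2}$ then gives, via the ``only if'' direction of Lemma~\ref{lem:weak_convergence_and_tensor_products}, that $A_j\xrightarrow{\WOT}T$ and hence $T\in\clos_{\WOT}(\cA_0)=W^\ast(S)$.

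The only non-routine ingredient is the WOT-closedness of $\mathcal{N}$; one can either quote the classical commutant identity $(\bC I_{H_1}\otimes\cB(H_2))'=\cB(H_1)\otimes\bC I_{H_2}$ for separable Hilbert spaces, or prove it by hand by fixing a unit vector $e\in H_2$ and defining $T$ through the bounded sesquilinear form $(f,g)\mapsto\langle R(f\otimes e),g\otimes e\rangle_{H_1\otimes H_2}$. Everything else is a direct transfer via Lemma~\ref{lem:weak_convergence_and_tensor_products}, so I do not anticipate any substantial obstacle beyond recalling this standard fact.
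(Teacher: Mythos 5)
Your proof follows essentially the same route as the paper: reduce both sides to WOT-closures of the parallel unital $\ast$-algebras and transfer the closure across the map $A\mapsto A\otimes I_{H_2}$ using Lemma~\ref{lem:weak_convergence_and_tensor_products}. You additionally make explicit (via the commutant identity $(\bC I_{H_1}\otimes\cB(H_2))'=\cB(H_1)\otimes\bC I_{H_2}$) the WOT-closedness of $\{T\otimes I_{H_2}\colon T\in\cB(H_1)\}$, which is what guarantees that every WOT-limit of a net of operators of the form $A\otimes I_{H_2}$ is again of that form --- a point the paper leaves implicit when it attributes the equality $\clos_{\WOT}(P)=\{B\otimes I_{H_2}\colon B\in\clos_{\WOT}(R)\}$ entirely to Lemma~\ref{lem:weak_convergence_and_tensor_products}, which by itself yields only the inclusion $\supseteq$.
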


\begin{proof}
Let $R$ be the unital algebra generated by $S$,
and
$P=\{A\otimes I_{H_2}\colon A\in R\}$.
Then, obviously, $P$ is the unital algebra generated by
$\{A\otimes I_{H_2}\colon A\in S\}$.
Furthermore,
\begin{align*}
W^\ast(\{A\otimes I_{H_2}\colon A\in S\})
&=\clos_{\WOT}(P)
=\{B\otimes I_{H_2}\colon B\in\clos_{\WOT}(R)\}
\\
&=\clos_{\WOT}(R)
\otimes (\bC I_{H_2})
=W^\ast(S)
\otimes(\bC I_{H_2}).
\end{align*}
The second equality in this chain follows from Lemma~\ref{lem:weak_convergence_and_tensor_products}.
\end{proof}

\begin{prop}
\label{prop:rhoGY_centralizer}
\begin{equation}\label{eq:rhoGY_commutant}
(F\otimes I) \rho_{G\times Y}^{\prime} (F\otimes I)^\ast
=\int_{\hG}^{\oplus}\cB(L^2(Y))\,\dif\hnu.
\end{equation}
Equivalently,
\begin{equation}\label{eq:rhoGY_commutant2}
(F\otimes I) \rho_{G\times Y}^{\prime} (F\otimes I)^\ast
=\cM_{\hG}\otimes \cB(L^2(Y)).
\end{equation}
\end{prop}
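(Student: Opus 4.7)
The plan is to transfer the question from the centralizer of $\rho_{G\times Y}$ to that of $\rho_{\hG\times Y}$ via the intertwining relation~\eqref{eq:from_rho_GY_to_rho_hGY}, then identify $\rho_{\hG\times Y}'$ with the asserted direct integral using the previously established structure results.

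First I would observe that the intertwining identity~\eqref{eq:from_rho_GY_to_rho_hGY} gives
\[
(F\otimes I)\,\rho_{G\times Y}'\,(F\otimes I)^\ast
= \rho_{\hG\times Y}',
\]
since conjugation by the unitary $F\otimes I$ is a W*-algebra isomorphism that carries the set $\{\rho_{G\times Y}(a)\}_{a\in G}$ onto $\{\rho_{\hG\times Y}(a)\}_{a\in G}$, hence preserves commutants. So it suffices to establish that $\rho_{\hG\times Y}'$ equals both sides of the claimed equalities.

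Next, using $\rho_{\hG\times Y}(a)=M_{E_{-a}}\otimes I_{L^2(Y)}$, I would combine Lemma~\ref{lem:tensor_product_by_identity} with Proposition~\ref{prop:commuting_with_mul_by_characters} (which identifies $W^\ast(\{M_{E_a}\colon a\in G\})$ with $\cM_{\hG}$) to obtain
\[
W^\ast(\{\rho_{\hG\times Y}(a)\colon a\in G\})
=\cM_{\hG}\otimes(\bC I_{L^2(Y)}).
\]
Taking commutants and invoking the commutation theorem for tensor products of von Neumann algebras together with the self-commuting property~\eqref{eq:multiplication_centralizer} ($\cM_{\hG}'=\cM_{\hG}$) then yields
\[
\rho_{\hG\times Y}'
=\bigl(\cM_{\hG}\otimes(\bC I_{L^2(Y)})\bigr)'
=\cM_{\hG}'\otimes\cB(L^2(Y))
=\cM_{\hG}\otimes\cB(L^2(Y)),
\]
proving~\eqref{eq:rhoGY_commutant2}.

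Finally, for~\eqref{eq:rhoGY_commutant} I would appeal to the standard identification, under the decomposition~\eqref{eq:L2_product}, of the algebra of decomposable operators with constant fiber $\cB(L^2(Y))$ as precisely $\cM_{\hG}\otimes\cB(L^2(Y))$; this is the classical description of $\int_{\hG}^\oplus\cB(L^2(Y))\,\dif\hnu$ as the commutant of the diagonal algebra $\cM_{\hG}\otimes\bC I_{L^2(Y)}$ (see, e.g., Dixmier, Part~II). The main subtlety is the appeal to the commutation theorem for tensor products: it is essentially automatic here since one factor is the abelian (hence finite type~I) algebra $\cM_{\hG}$ and the other factor is $\bC I$, but it is the one place where I would want to cite a standard reference rather than verify by hand. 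Everything else is a direct assembly of the lemmas already proved in Sections~\ref{sec:Wstar_Stone_Weierstrass}--\ref{sec:invar_L2}.
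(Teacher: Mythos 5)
Your proof is correct and follows essentially the same route as the paper's: conjugate by $F\otimes I$ to pass to $\rho_{\hG\times Y}'$, identify $W^\ast(\rho_{\hG\times Y})$ via Lemma~\ref{lem:tensor_product_by_identity} and Proposition~\ref{prop:commuting_with_mul_by_characters}, apply the commutation theorem for tensor products together with $\cM_{\hG}'=\cM_{\hG}$, and then pass to the direct-integral description. The only cosmetic difference is the last step, where you describe $\int_{\hG}^\oplus\cB(L^2(Y))\,\dif\hnu$ as the commutant of the diagonal algebra, whereas the paper invokes the distributive relation $\bigl(\int_{\hG}\bC\,\dif\hnu\bigr)\otimes\cB(L^2(Y))=\int_{\hG}\cB(L^2(Y))\,\dif\hnu$; these are equivalent pieces of standard direct-integral theory from Dixmier.
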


\begin{proof}
Since $F\otimes I$ is a unitary operator and
$(F\otimes I)\rho_{G\times Y}(F\otimes I)^\ast=\rho_{\hG\times Y}$,
we have
\[
(F\otimes I)(\rho_{G\times Y})' (F\otimes I)^* = (\rho_{\hG\times Y})'.
\]
Furthermore, by Lemma~\ref{lem:tensor_product_by_identity},
\[
W^*(\rho_{\hG\times Y})
=W^*(\{\rho_{\hG}(a)\otimes I_{L^2(Y)}\colon a\in G\})
=W^*(\rho_{\hG})\otimes
(\bC I_{L^2(Y)}).
\]
Now we apply the fact \cite[Theorem 2.8.1]{Sakai1971}, \cite[Theorem 5.9]{Takesaki2002} that the centralizer of the tensorial product
is the tensor product of the corresponding centralizers,
and use
Proposition~\ref{prop:commuting_with_mul_by_characters}:
\[
\rho_{\hG\times Y}'
=W^*(\rho_{\hG\times Y})'
= W^*(\rho_{\hG})'\otimes \cB(L^2(Y))
= \cM_{\hG}\otimes \cB(L^2(Y)).
\]
We have proven~\eqref{eq:rhoGY_commutant2}.
Furthermore, it is well known
(see a more general result in \cite[Corollary~8.30]{Takesaki2002}) that
\[
\cM_{\hG}=\int_{\hG}^{\oplus}\bC\,\dif\hnu.
\]
Now, using the ``distributive relation'' between the direct integral and the tensor product of von Neumann algebras
\cite[Part~II, Chapter~3, Section~4, Proposition~4]{Dixmier1981vonNeumann}, we obtain~\eqref{eq:rhoGY_commutant2}:
\[
\rho_{\hG\times Y}'
= \cM_{\hG}\otimes \cB(L^2(Y))
= \left(\int_{\hG}\bC\,\dif\hnu\right)\otimes \cB(L^2(Y))
= \int_{\hG} \cB(L^2(Y))\,\dif\hnu.
\qedhere
\]
\end{proof}

The next corollary gives a constructive recipe for the decomposition~\eqref{eq:rhoGY_commutant}.

\begin{cor}
\label{cor:Axi_explicit}
Let $S\in\rho_{G\times Y}'$.
For every $\xi$ in $\hG$, define
$A_\xi\colon L^2(Y)\to L^2(Y)$ by
\begin{equation}\label{eq:Axi_recipe}
(A_\xi h)(v) = \frac{(F\otimes I) S(f\otimes h)(\xi,v)}{(F_1 f)(\xi)}\qquad(h\in L^2(Y)),
\end{equation}
where $f$ is any function of the class $L^1(G)\cap L^2(G)$ such that its Fourier transform $F_1 f$ does not vanish, and $(f\otimes h)(u,v)\eqdef f(u)h(v)$. 
Then
\begin{equation}\label{eq:S_explicit_decomposition}
(F\otimes I) S (F\otimes I)^\ast
= \int^\oplus_{\hG}
A_\xi\,\dif\hnu(\xi).
\end{equation}
\end{cor}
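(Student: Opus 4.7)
The plan is to combine the abstract direct-integral decomposition provided by Proposition~\ref{prop:rhoGY_centralizer} with an explicit evaluation on separable tensors $f\otimes h$ to pin down the fibers. Write $T\eqdef (F\otimes I)S(F\otimes I)^\ast$. By Proposition~\ref{prop:rhoGY_centralizer}, there exists a $\hnu$-measurable field $(B_\xi)_{\xi\in\hG}$ with $B_\xi\in\cB(L^2(Y))$ such that $T=\int^\oplus_{\hG} B_\xi\,\dif\hnu(\xi)$. Proving~\eqref{eq:S_explicit_decomposition} then reduces to showing $B_\xi=A_\xi$ for $\hnu$-almost every $\xi$.

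To identify the fibers, I fix once and for all an $f\in L^1(G)\cap L^2(G)$ whose Fourier transform $F_1 f$ vanishes nowhere; such $f$ exist on any $\sigma$-compact metrizable LCAG (e.g., pull back via $F^{-1}$ a strictly positive function in $L^1(\hG)\cap L^2(\hG)$ and, if needed, convolve to land in $L^1(G)$). For an arbitrary $h\in L^2(Y)$, since $f\in L^1\cap L^2$ we have $(F\otimes I)(f\otimes h)=(F_1 f)\otimes h$, which under the identification~\eqref{eq:L2_product} is precisely the vector field $\xi\mapsto (F_1 f)(\xi)\,h$. Applying $T$ fibrewise yields the field $\xi\mapsto (F_1 f)(\xi)\,B_\xi h$. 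On the other hand,
\[
T\bigl((F\otimes I)(f\otimes h)\bigr)
=(F\otimes I)S(F\otimes I)^\ast(F\otimes I)(f\otimes h)
=(F\otimes I) S(f\otimes h).
\]
Equating these two vectors inside $\int_{\hG}^\oplus L^2(Y)\,\dif\hnu(\xi)$ and dividing by the nonzero scalar $(F_1 f)(\xi)$, I obtain $B_\xi h = A_\xi h$ for $\hnu$-a.e.\ $\xi$, with an exceptional null set depending on $h$.

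To upgrade this to an equality of operators, I pick a countable dense subset $\{h_n\}_{n\in\bN}$ of the separable space $L^2(Y)$ and intersect the corresponding full-measure sets, obtaining a single conull subset of $\hG$ on which $B_\xi h_n = A_\xi h_n$ holds for all $n$. Since $B_\xi$ is a bounded linear operator, the identity extends by continuity to all of $L^2(Y)$, which simultaneously shows that $A_\xi$ is well-defined and bounded and that $A_\xi=B_\xi$ almost everywhere, proving~\eqref{eq:S_explicit_decomposition}. The main obstacle here is conceptual rather than computational: formula~\eqref{eq:Axi_recipe} does not visibly define a bounded operator or a $\hnu$-measurable field in $\xi$, and both properties have to be recovered \emph{a posteriori} from the intrinsic field $(B_\xi)$ supplied by Proposition~\ref{prop:rhoGY_centralizer}. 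The independence of~\eqref{eq:Axi_recipe} on the auxiliary choice of $f$ follows from the same identification.
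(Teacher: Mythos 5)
Your proof follows essentially the same route as the paper's: both invoke Proposition~\ref{prop:rhoGY_centralizer} to obtain the direct-integral decomposition of $(F\otimes I)S(F\otimes I)^\ast$ and then evaluate fiberwise on simple tensors $f\otimes h$, using the identity $(F\otimes I)(f\otimes h)=(F_1 f)\otimes h$ to pin down the fibers. You are somewhat more careful than the paper about the exceptional $\hnu$-null set depending on $h$, handling it via a countable dense subset of the separable space $L^2(Y)$, but this is a refinement of the same argument rather than a different one.
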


\begin{proof}
The existence of a family $(A_\xi)_{\xi\in\hG}$  satisfying~\eqref{eq:S_explicit_decomposition}
follows from Proposition~\ref{prop:rhoGY_centralizer}.
We are going to prove~\eqref{eq:Axi_recipe}.
Let $f\in L^1(G)\cap L^2(G)$ such that $F_1 f$ does not vanish,
and let $h\in L^2(Y)$.
Put $g\eqdef F_1 f=F f$.
Then $g\otimes h=(F\otimes I)(f\otimes h)$, and
\begin{align*}
((F\otimes I)S(f\otimes h))(\xi,v)
&=
((F\otimes I)S(F\otimes I)^\ast)(g\otimes h))(\xi,v)
\\
&= 
(A_\xi (g\otimes h)(\xi,\cdot))(v)
=
(A_\xi (g(\xi) h))(v)
=
(F_1 f)(\xi)\cdot (A_\xi h)(v).
\end{align*}
Dividing by $(F_1 f)(\xi)$ we get~\eqref{eq:Axi_recipe}.
\end{proof}

Corollary~\ref{cor:Axi_explicit} (and thereby Proposition~\ref{prop:rhoGY_centralizer}) can be proved with a more direct and elementary reasoning,
similarly to Larsen~\cite[proof of Theorem~4.1.1]{Larsen1971}.

\section{Translation-invariant operators in Hilbert spaces}
\label{sec:invar_H}

In this section, we make the following assumption.

\begin{assumption}\label{assumption:H}
Additionally to Assumption~\ref{assumption:GY},
let $H$ be a closed subspace of $L^2(G\times Y)$,
and $P\colon L^2(G\times Y)\to L^2(G\times Y)$ be the orthogonal projection
with $P(L^2(G\times Y))=H$.
We suppose that $H$ is an invariant subspace of 
the representation $\rho_{G\times Y}$.
Equivalently, $P$ commutes with $\rho_{G\times Y}(a)$ for all $a$ in $G$.
\end{assumption}

Recall that the unitary representation $\rho_H$ and its centralizer $\cV\eqdef\rho_H'$ were defined in Section~\ref{sec:intro}, see~\eqref{eq:def_V}.
Using the general tools from previous sections, in this section we easily obtain a decomposition of $\cV$.

Let $\hH\eqdef(F\otimes I)(H)$
and let $\hP$ be the orthogonal projection acting in $L^2(\hG\times Y)$ such that $\hP(L^2(\hG\times Y))=\hH$.
Equivalently,
\[
\hP
= (F\otimes I) P (F\otimes I)^\ast.
\]

\begin{prop}\label{prop:P_hH_decomposition}
There exists a family of orthogonal projections
$(\hP_\xi)_{\xi\in\hG}$ acting in $L^2(Y)$
such that
\begin{equation}\label{eq:P_decomposition}
\hP
=\int^{\oplus}_{\hG}\,\hP_\xi\,\dif\hnu(\xi).
\end{equation}
\end{prop}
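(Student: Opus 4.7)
The plan is to reduce this to Proposition~\ref{prop:rhoGY_centralizer} applied to $\hP$, and then read off the structure of the fibers from the fact that $\hP$ is a self-adjoint idempotent.

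First I would verify that $\hP$ lies in $\rho_{\hG\times Y}'$. By Assumption~\ref{assumption:H}, $P$ commutes with $\rho_{G\times Y}(a)$ for every $a\in G$. Conjugating by the unitary $F\otimes I$ and using~\eqref{eq:from_rho_GY_to_rho_hGY} gives
\[
\hP\,\rho_{\hG\times Y}(a)
=(F\otimes I)P\rho_{G\times Y}(a)(F\otimes I)^\ast
=(F\otimes I)\rho_{G\times Y}(a)P(F\otimes I)^\ast
=\rho_{\hG\times Y}(a)\,\hP
\]
for every $a\in G$, so $\hP\in\rho_{\hG\times Y}'$. By Proposition~\ref{prop:rhoGY_centralizer} there exists a measurable field $(\hP_\xi)_{\xi\in\hG}$ of operators in $\cB(L^2(Y))$ such that
\[
\hP=\int^\oplus_{\hG}\hP_\xi\,\dif\hnu(\xi).
\]

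Next I would check that each fiber is an orthogonal projection. Since the decomposition~\eqref{eq:rhoGY_commutant} is a W*-isomorphism, it preserves the $\ast$-operation and products fiberwise: $\hP^\ast=\int^\oplus \hP_\xi^\ast\,\dif\hnu(\xi)$ and $\hP^2=\int^\oplus \hP_\xi^2\,\dif\hnu(\xi)$. The identities $\hP^\ast=\hP$ and $\hP^2=\hP$ thus translate into $\hP_\xi^\ast=\hP_\xi$ and $\hP_\xi^2=\hP_\xi$ for $\hnu$-almost every $\xi$, because a direct integral operator vanishes iff its fibers vanish almost everywhere.

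Finally, the field $(\hP_\xi)_{\xi\in\hG}$ is only determined up to a $\hnu$-null set, so I would redefine $\hP_\xi\eqdef 0$ on the exceptional null set where selfadjointness or idempotence fails; this does not alter the direct integral and yields a genuine family of orthogonal projections defined everywhere on $\hG$. The only mildly delicate point is the implicit appeal to the fact that the $\ast$-algebra structure of the direct integral matches the fiberwise structure and that ``zero'' in the direct integral means ``zero almost everywhere'' fiberwise; both are standard facts about direct integrals, already used implicitly in Section~\ref{sec:commutativity_of_direct_integral}, so no real obstacle arises.
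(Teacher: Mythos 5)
Your proof is correct and follows essentially the same route as the paper's: invoke Proposition~\ref{prop:rhoGY_centralizer} to get a measurable field $(\hP_\xi)_{\xi\in\hG}$, use the fact that the direct-integral decomposition respects adjoints and products fiberwise together with $\hP^\ast=\hP$ and $\hP^2=\hP$ to conclude each $\hP_\xi$ is a projection $\hnu$-a.e., and then modify on a null set (replacing the exceptional fibers by $0$, which is itself an orthogonal projection, is a legitimate choice). The only cosmetic difference is that you verify $\hP\in\rho_{\hG\times Y}'$ by an explicit conjugation computation, whereas the paper simply notes $P\in\rho_{G\times Y}'$ and reads off the decomposition from the statement of Proposition~\ref{prop:rhoGY_centralizer}; these are the same observation.
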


\begin{proof}
Since $P\in\rho_{G\times Y}'$,
by Proposition~\ref{prop:rhoGY_centralizer}, there exists a family $(\hP_\xi)_{\xi\in\hG}$ in $L^2(Y)$ such that $(F\otimes I) P (F\otimes I)^\ast$ decomposes into the direct integral~\eqref{eq:P_decomposition}.
We have that $\hP^2=\hP$ and $\hP^\ast=\hP$.
By well-known properties of the direct integral~\cite[formula~(7.24)]{Folland2016harmonic},
\[
\int^{\oplus}_{\hG}
\hP_\xi^2\,\dif\hnu(\xi)
=\int^{\oplus}_{\hG}
\hP_\xi\,\dif\hnu(\xi),\qquad
\int^{\oplus}_{\hG}
\hP_\xi^\ast\,\dif\hnu(\xi)
=\int^{\oplus}_{\hG}
\hP_\xi\,\dif\hnu(\xi).
\]
Therefore, the equalities $\hP_\xi^2=\hP_\xi$ and $\hP_\xi^\ast=\hP_\xi$
are fulfilled for almost every $\xi$ in $\hG$.
After modifying $\hP_\xi$ on a set of zero measure, we assure these properties for all $\xi$ in $\hG$.
\end{proof}

\begin{rem}
\label{rem:P_xi_explicit}
Formula~\eqref{eq:Axi_recipe} yields an explicit expression for $\hP_\xi$:
\begin{equation}\label{eq:hP_xi_explicit}
(\hP_\xi h)(v)
= \frac{((F\otimes I) P(f\otimes h))(\xi,v)}{(F_1 f)(\xi)}\qquad(h\in L^2(Y)),
\end{equation}
where $f$ is any function of the class $L^1(G,\nu)\cap L^2(G,\nu)$ such that its Fourier transform $F_1 f$ does not vanish.
\end{rem}

In the rest of this section, we fix a family $(\hP_\xi)_{\xi\in\hG}$ as in Proposition~\ref{prop:P_hH_decomposition}.
For each $\xi$ in $\hG$, we denote by $\hH_\xi$ the image of the operator $\hP_\xi$
and by $d_\xi$ its dimension:
\begin{equation}\label{eq:hH_xi_def}
\hH_\xi\eqdef\hP_\xi(L^2(Y)),\qquad
d_\xi\eqdef\dim(\hH_\xi).
\end{equation}
Furthermore, we 
denote by $\Omega$ the set of the frequencies corresponding to the non-trivial fibers:
\begin{equation}\label{eq:def_Omega}
\Omega\eqdef\{\xi\in\hG\colon\
d_\xi>0\}.
\end{equation}

\begin{prop}
\label{prop:Hxi_measurable}
$(\hH_\xi)_{\xi\in\Omega}$ is a measurable field of Hilbert spaces.
Moreover, there exists a sequence of measurable vector fields $(q_j)_{j\in\bN}$ with the following properties:
\begin{itemize}
\item[(i)] $(q_{j,\xi})_{j=1}^{d_\xi}$ is an orthonormal basis for $\hH_\xi$, and $q_{j,\xi}=0$ for $j>\dim(\hH_\xi)$,
\item[(ii)] for each $j$ in $\bN$, the function $\Omega\times Y\to\bC$, $(\xi,v)\mapsto q_{j,\xi}(v)$, is measurable.
\end{itemize}
\end{prop}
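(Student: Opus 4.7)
The key observation is that the decomposition $L^2(\hG\times Y)=\int^\oplus_{\hG} L^2(Y)\,\dif\hnu(\xi)$ has constant fiber $L^2(Y)$, which is separable by Assumption~\ref{assumption:GY}. My plan is to produce a fundamental sequence of measurable vector fields for $(\hH_\xi)_{\xi\in\Omega}$ by pushing a fixed orthonormal basis of $L^2(Y)$ through the decomposable projection $\hP$, and then apply the Gram--Schmidt procedure from Proposition~\ref{prop:basis_fields}.

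First, fix an orthonormal basis $(e_k)_{k\in\bN}$ of $L^2(Y)$ and define vector fields $g_k\colon\hG\to L^2(Y)$ by $g_k(\xi)\eqdef\hP_\xi e_k$. Since $\hP=\int^\oplus \hP_\xi\,\dif\hnu(\xi)$ is decomposable, the inner-product functions
\[
\xi\mapsto\langle g_j(\xi),g_k(\xi)\rangle_{L^2(Y)}
=\langle\hP_\xi e_j,e_k\rangle_{L^2(Y)}
\]
are measurable. Moreover, for every $\xi\in\Omega$ the set $\{g_k(\xi)\colon k\in\bN\}$ is total in $\hH_\xi$: if $h\in\hH_\xi$ is orthogonal to every $g_k(\xi)$, then $\langle h,e_k\rangle=\langle\hP_\xi h,e_k\rangle=\langle h,\hP_\xi e_k\rangle=0$ for all $k$, whence $h=0$. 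This shows that $(\hH_\xi)_{\xi\in\Omega}$ together with $(g_k)_{k\in\bN}$ is a measurable field of non-zero Hilbert spaces.

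Next, I would verify the joint measurability of $(\xi,v)\mapsto g_k(\xi)(v)$. Since $\hG$ is $\sigma$-compact by Assumption~\ref{assumption:GY}, write $\hG=\bigcup_{n\in\bN}\hK_n$ with an increasing compact covering. For each $n$, the function $\phi_n(\xi,v)\eqdef\chi_{\hK_n}(\xi)\,e_k(v)$ belongs to $L^2(\hG\times Y)$. Under the identification $L^2(\hG\times Y)=\int^\oplus L^2(Y)\,\dif\hnu$, we have $(\hP\phi_n)(\xi,v)=\chi_{\hK_n}(\xi)(\hP_\xi e_k)(v)$ as elements of $L^2(\hG\times Y)$, so this function is measurable on $\hK_n\times Y$. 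Taking the union over $n$ yields measurability of $(\xi,v)\mapsto(\hP_\xi e_k)(v)$ on $\hG\times Y$.

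Finally, apply Proposition~\ref{prop:basis_fields} to the measurable field $(\hH_\xi)_{\xi\in\Omega}$ with fundamental sequence $(g_k)$ to obtain vector fields $(q_j)_{j\in\bN}$ satisfying (i); in particular, property (i) is the first bullet of Proposition~\ref{prop:basis_fields}. Property (ii) follows from the second bullet: there is a measurable partition $\Omega=\bigsqcup_k\Omega_{j,k}$ on each piece of which $q_{j,\xi}$ is a finite linear combination of the $g_k(\xi)=\hP_\xi e_k$ with measurable coefficients $\alpha_{j,k,\ell}(\xi)$, and the joint measurability established above for each $(\xi,v)\mapsto(\hP_\xi e_k)(v)$ transfers to $(\xi,v)\mapsto q_{j,\xi}(v)$ by a finite sum on each $\Omega_{j,k}\times Y$. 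The only delicate point in the whole argument is the passage from measurability of $\xi\mapsto\hP_\xi e_k$ as a vector field to joint measurability of $(\xi,v)\mapsto(\hP_\xi e_k)(v)$; $\sigma$-compactness of $\hG$ makes this routine, and everything else is bookkeeping around Proposition~\ref{prop:basis_fields}.
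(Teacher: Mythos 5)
Your proof is correct and follows the same underlying strategy as the paper: define $g_{k,\xi}\eqdef\hP_\xi e_k$ for a fixed orthonormal basis $(e_k)$ of $L^2(Y)$, check that $(g_k)$ is a fundamental sequence of measurable vector fields, and then feed this into Proposition~\ref{prop:basis_fields} (noting that on each $\Omega_{j,k}$ the $q_{j,\xi}$ is a measurable linear combination of the $g_{k,\xi}$, so joint measurability transfers). The one place where you diverge is the step establishing \emph{joint} measurability of $(\xi,v)\mapsto(\hP_\xi e_k)(v)$: the paper simply invokes the explicit formula~\eqref{eq:hP_xi_explicit} of Remark~\ref{rem:P_xi_explicit}, $(\hP_\xi h)(v)=((F\otimes I)P(f\otimes h))(\xi,v)/(F_1 f)(\xi)$ with $F_1 f$ non-vanishing, which expresses $\hP_\xi e_k$ directly as an $L^2(\hG\times Y)$ function divided by a continuous function, whereas you instead exhaust $\hG$ by compacts $\hK_n$, apply $\hP$ to $\chi_{\hK_n}\otimes e_k\in L^2(\hG\times Y)$, and piece the results together. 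Both arguments ultimately rest on the same observation --- that $\hP$ sends $L^2(\hG\times Y)$ into itself, so images admit jointly measurable representatives --- so the two routes are equally valid; yours is a little more self-contained (it does not use the recipe of Corollary~\ref{cor:Axi_explicit}), while the paper's is shorter because that recipe was already on hand. One small wording caveat: in the line ``$(\hP\phi_n)(\xi,v)=\chi_{\hK_n}(\xi)(\hP_\xi e_k)(v)$ as elements of $L^2(\hG\times Y)$'' the identity holds only for $\hnu$-almost every $\xi$; as the paper does in the proof of Proposition~\ref{prop:P_hH_decomposition}, one should modify $\hP_\xi$ on a null set so the chosen field $(\hP_\xi)$ actually matches a fixed jointly measurable representative of $\hP\phi_n$ everywhere, and then the $\sigma$-compactness argument goes through cleanly.
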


\begin{proof}
Given an orthonormal basis $(e_j)_{j\in\bN}$ in $L^2(Y)$, we put
\[
g_{j,\xi}
\eqdef \hP_\xi e_j.
\]
Then $(g_{j,\xi})_{j\in\bN}$ is complete in $\hH_\xi$ for each $\xi$.
Due to~\eqref{eq:hP_xi_explicit}, the functions $(\xi,v)\mapsto g_{j,\xi}(v)$ are measurable on $\Omega\times Y$.

Applying Proposition~\ref{prop:basis_fields} we get a family $(q_{j,\xi})_{j\in\bN,\xi\in\Omega}$ with desired properties.
Indeed, if $\Omega_{j,k}$ are as Proposition~\ref{prop:basis_fields},
then $(\xi,v)\mapsto q_{j,\xi}(v)$ is measurable on $A_{j,k}\times Y$ being a finite linear combination of measurable functions $(\xi,v)\mapsto g_{k,\xi}(v)$.

We notice that the measurability in this sense (as functions defined on $\Omega\times Y$) is stronger then the measurability which appears in the definition of a measurable field of Hilbert spaces.
\end{proof}

\begin{prop}
\label{prop:hH_is_direct_integral}
$\hH$ is the direct integral of the spaces $\hH_\xi$:
\label{prop:hH_decomposition}
\begin{equation}
\label{eq:hH_decomposition}
\hH = \int^{\oplus}_{\Omega}\,\hH_\xi\,\dif\hnu(\xi).
\end{equation}
\end{prop}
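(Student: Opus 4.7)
The plan is to identify $\hH$ as the range of the projection $\hP$ and push the pointwise characterization of this projection through the direct integral decomposition~\eqref{eq:P_decomposition}. Since $\hP_\xi = 0$ for every $\xi \notin \Omega$, we may naturally view $\int^\oplus_\Omega \hH_\xi\, \dif\hnu(\xi)$ as the subspace of $L^2(\hG\times Y)=\int^\oplus_{\hG} L^2(Y)\,\dif\hnu(\xi)$ consisting of those vector fields that vanish $\hnu$-almost everywhere outside $\Omega$ and take values in $\hH_\xi$ at each $\xi\in\Omega$. By Proposition~\ref{prop:Hxi_measurable}, the field $(\hH_\xi)_{\xi\in\Omega}$ is measurable, so this direct integral is well-defined in the standard sense.

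First I would verify the inclusion $\hH\subseteq\int^\oplus_\Omega\hH_\xi\,\dif\hnu(\xi)$. Let $g\in\hH$. Then $g=\hP g$, and applying~\eqref{eq:P_decomposition} together with the standard fact that composition of decomposable operators with decomposable vectors acts fiberwise~\cite[formula~(7.24)]{Folland2016harmonic}, we get $g(\xi)=\hP_\xi g(\xi)$ for $\hnu$-almost every $\xi\in\hG$. Consequently $g(\xi)\in\hH_\xi$ for almost all $\xi$, and $g(\xi)=0$ for almost all $\xi\notin\Omega$ (because $\hP_\xi=0$ there by the definition of $\Omega$). This exhibits $g$ as an element of $\int^\oplus_\Omega\hH_\xi\,\dif\hnu(\xi)$ after modification on a null set.

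For the reverse inclusion, take any $g\in\int^\oplus_\Omega\hH_\xi\,\dif\hnu(\xi)$, extended by zero outside $\Omega$. Then $\hP_\xi g(\xi)=g(\xi)$ for all $\xi\in\Omega$ because $g(\xi)\in\hH_\xi=\hP_\xi(L^2(Y))$, and $\hP_\xi g(\xi)=0=g(\xi)$ for $\xi\notin\Omega$. Integrating fiberwise yields $\hP g=g$, so $g\in\hP(L^2(\hG\times Y))=\hH$.

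The argument is essentially bookkeeping, and the only subtlety worth flagging is ensuring that the identifications are compatible with the measurable structures. Specifically, one should note that the direct integral $\int^\oplus_\Omega\hH_\xi\,\dif\hnu(\xi)$, constructed from the measurable field produced in Proposition~\ref{prop:Hxi_measurable}, coincides as a subspace of $L^2(\hG\times Y)$ with the set of square-integrable vector fields $\xi\mapsto g(\xi)$ such that $g(\xi)\in\hH_\xi$ almost everywhere; this is precisely the content needed to carry out both inclusions above, and it is standard once a measurable field of orthonormal bases is available.
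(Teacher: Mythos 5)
Your proof is correct and follows essentially the same route as the paper: both directions are established by passing through the fiberwise decomposition $\hP=\int^\oplus_{\hG}\hP_\xi\,\dif\hnu(\xi)$, checking that $g=\hP g$ holds iff $g(\xi,\cdot)=\hP_\xi g(\xi,\cdot)$ for almost all $\xi$, and using $\hP_\xi=0$ off $\Omega$ to restrict the integral. Your added remark about compatibility of the measurable structures is a reasonable bookkeeping note but does not change the argument.
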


\begin{proof}
If $g\in\hH$ and $g_\xi\eqdef g(\xi,\cdot)$ for every $\xi$,
then $\hP_\xi g_\xi=g_\xi$ for almost every $\xi$.
After modifying $g$ on a set of measure zero, if needed, we assume that $\hP_\xi g_\xi = g_\xi$ for all $\xi$ in $\Omega$
and $g_\xi=0$ for every $\xi$ in $\hG\setminus\Omega$.
So, the family $(g_\xi)_{\xi\in\Omega}$ belongs to the direct integral in the right-hand side of~\eqref{eq:hH_decomposition}.

Conversely, given a vector field $(g_\xi)_{\xi\in\Omega}$ belonging to the right-hand side of~\eqref{eq:hH_decomposition},
we trivially extend $g_\xi=0$ for $\xi$ in $\hG\setminus\Omega$ and obtain a function $g$ of the class $L^2(\hG\times Y)$ such that $\hP g=g$.
\end{proof}

Let $\Phi\colon H\to\hH$ be defined by $\Phi(f)\eqdef(F\otimes I)(f)$.
In other words, $\Phi$ is the compression of $F\otimes I$ to the domain $H$ and codomain $\hH$.

\begin{thm}
\label{thm:general_V_decomposition}
With Assumption~\ref{assumption:H},
\begin{equation}\label{eq:general_V_decomposition}
\Phi \cV \Phi^\ast
= \int^{\oplus}_\Omega\,
\cB(\hH_\xi)\,\dif\hnu(\xi).
\end{equation}
\end{thm}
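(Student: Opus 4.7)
The strategy is to realise $\cV$ as a corner of $\rho_{G\times Y}'$ cut out by the projection $P$, and then to pass to the fibers via Proposition~\ref{prop:rhoGY_centralizer} together with Propositions~\ref{prop:P_hH_decomposition} and~\ref{prop:hH_is_direct_integral}.

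First I would establish a bijection between $\cV$ and the subset
\begin{equation*}
\cK \eqdef \{T\in\rho_{G\times Y}'\colon\ PTP=T\}.
\end{equation*}
Since $H$ is $\rho_{G\times Y}$-invariant, $P$ belongs to $\rho_{G\times Y}'$. Given $S\in\cV$, define $\iota(S)\in\cB(L^2(G\times Y))$ by $\iota(S)f\eqdef S(Pf)$, viewing the value in $L^2(G\times Y)$ via the inclusion $H\hookrightarrow L^2(G\times Y)$. A direct computation, using that $P$ commutes with $\rho_{G\times Y}(a)$ and that $H$ is $\rho_{G\times Y}$-invariant, shows that $\iota(S)\in\rho_{G\times Y}'$, that $P\iota(S)P=\iota(S)$, and that $\iota$ is an injective $\ast$-homomorphism onto $\cK$ with inverse $T\mapsto T|_H$.

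Second, I would transport $\cK$ through $F\otimes I$. By Proposition~\ref{prop:rhoGY_centralizer}, $(F\otimes I)\rho_{G\times Y}'(F\otimes I)^\ast=\int^{\oplus}_{\hG}\cB(L^2(Y))\,\dif\hnu(\xi)$, while by Proposition~\ref{prop:P_hH_decomposition} the projection $P$ is conjugated into $\hP=\int^{\oplus}_{\hG}\hP_\xi\,\dif\hnu$. Consequently, an operator $\int^\oplus A_\xi\,\dif\hnu$ belongs to $(F\otimes I)\cK(F\otimes I)^\ast$ if and only if $\hP_\xi A_\xi\hP_\xi=A_\xi$ for $\hnu$-almost all $\xi$, i.e., $A_\xi$ lies in the corner $\hP_\xi\cB(L^2(Y))\hP_\xi$. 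Restricted to $\hH_\xi$, this corner is naturally identified with $\cB(\hH_\xi)$, and it vanishes when $\xi\notin\Omega$. Combining with the decomposition $\hH=\int^{\oplus}_{\Omega}\hH_\xi\,\dif\hnu$ from Proposition~\ref{prop:hH_is_direct_integral} and recalling that $\Phi$ is the compression of $F\otimes I$ to the pair $(H,\hH)$, one deduces formula~\eqref{eq:general_V_decomposition}.

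The main technical obstacle I foresee lies in the converse direction of the last step: given a measurable field $(B_\xi)_{\xi\in\Omega}$ with $B_\xi\in\cB(\hH_\xi)$, one must produce a measurable field $(A_\xi)_{\xi\in\hG}$ in $\cB(L^2(Y))$ satisfying $\hP_\xi A_\xi\hP_\xi=A_\xi$ and restricting to $B_\xi$ on each fiber. The natural choice is to extend $B_\xi$ by zero off $\hH_\xi$ and set $A_\xi\eqdef B_\xi\hP_\xi$; the nontrivial point is verifying that the resulting family is measurable as an element of $\int^{\oplus}_{\hG}\cB(L^2(Y))\,\dif\hnu$. This is where the measurable orthonormal basis field $(q_{j,\xi})$ furnished by Proposition~\ref{prop:Hxi_measurable} becomes essential, since it allows one to encode $B_\xi$, and hence $A_\xi$, through measurable matrix coefficients of the form $\xi\mapsto\langle A_\xi e_k,e_\ell\rangle_{L^2(Y)}$.
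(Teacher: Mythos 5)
Your proposal is correct and follows essentially the same route as the paper: both proofs exploit that $P\in\rho_{G\times Y}'$, extend an operator $S\in\cV$ to $SP\in\rho_{G\times Y}'$, and then invoke Proposition~\ref{prop:rhoGY_centralizer} together with the decomposition of $\hP$ to read off the fiberwise compression onto $\hH_\xi$. The paper explicitly proves only the inclusion $\subseteq$ and declares the reverse direction obvious, whereas you spell out both directions and isolate the corner $\cK=\{T\in\rho_{G\times Y}'\colon PTP=T\}$ together with the $\ast$-isomorphism $\iota$, and you also flag the measurability of $A_\xi\eqdef B_\xi\hP_\xi$ (handled via Proposition~\ref{prop:Hxi_measurable}) as the one technical point in the converse --- this is a genuine refinement that makes the argument more complete, though the underlying mechanism is the same.
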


\begin{proof}
We will explain the inclusion $\subseteq$ only.
Let $S\in\cV$.
Define $A\in\cB(L^2(G\times Y))$ by
$Af\eqdef SPf$.
Since $S$ takes values in $H$,
we obtain $PA=PAP=AP$.
Furthermore, Assumption~\ref{assumption:H} implies that $P\in\rho_{G\times Y}'$
and therefore $A\in\rho_{G\times Y}'$.
By Proposition~\ref{prop:rhoGY_centralizer},
there exists a family $(B_\xi)_{\xi\in\hG}$ in $\cB(L^2(Y))$ such that
\[
(F\otimes I) A (F\otimes I)^\ast = \int_{\hG}^{\oplus}B_\xi\,\dif\hnu(\xi).
\]
Since $A$ commutes with $P$,
we conclude that $(F\otimes I) A (F\otimes I)^\ast$ commutes with $\hP$.
By~\eqref{eq:P_decomposition},
for almost all $\xi$ we obtain that $B_\xi$ commutes with $\hP_\xi$,
i.e., $\hH_\xi$ is an invariant subspace of $B_\xi$.
Let $D_\xi$ be the compression of $B_\xi$ to $\hH_\xi$.
Then for every $g$ in $\hH$ and almost every $\xi$ in $\Omega$,
\[
(\Phi S \Phi^\ast g)
(\xi,\cdot)
=((F\otimes I) A (F\otimes I)^\ast g)
(\xi,\cdot)
= B_\xi g(\xi,\cdot)
= D_\xi g(\xi,\cdot).
\]
For $\xi$ in $\hG\setminus\Omega$, the space $\hH_\xi$ is trivial, and we omit these values of $\xi$.
So,
\[
\Phi S \Phi^\ast
=\int_\Omega D_\xi\,\dif\hnu(\xi).
\qedhere
\]
\end{proof}

\begin{prop}
\label{prop:commutativity_criterion_H}
$\cV$ is commutative
if and only if $d_\xi=1$
for $\hnu$-almost every point $\xi$ of $\Omega$.
\end{prop}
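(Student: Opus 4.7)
The proof should be essentially immediate, since all the machinery has been set up. The plan is to combine the decomposition established in Theorem~\ref{thm:general_V_decomposition} with the abstract commutativity criterion of Proposition~\ref{prop:criterion_commutativity_direct_integral}.

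First, I would observe that $\Phi\colon H\to\hH$ is a unitary operator (being the compression of the Plancherel transform $F\otimes I$ to the invariant subspace $H$ with codomain $\hH=(F\otimes I)(H)$). Hence $\cV$ and $\Phi\cV\Phi^\ast$ are $\ast$-isomorphic as W*-algebras, and in particular $\cV$ is commutative if and only if $\Phi\cV\Phi^\ast$ is commutative.

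Second, by Theorem~\ref{thm:general_V_decomposition},
\[
\Phi \cV \Phi^\ast
= \int^{\oplus}_\Omega\,\cB(\hH_\xi)\,\dif\hnu(\xi).
\]
By construction of $\Omega$ in~\eqref{eq:def_Omega}, every fiber $\hH_\xi$ with $\xi\in\Omega$ is non-zero, and by Proposition~\ref{prop:Hxi_measurable} the family $(\hH_\xi)_{\xi\in\Omega}$ is a measurable field of separable Hilbert spaces. Therefore Proposition~\ref{prop:criterion_commutativity_direct_integral} applies with $(\Omega,\hnu)$ in place of $(\Omega,\mu)$ and gives that the direct integral is commutative if and only if $\hnu(\{\xi\in\Omega\colon d_\xi\ge 2\})=0$, which is exactly the condition that $d_\xi=1$ for $\hnu$-almost all $\xi$ in $\Omega$.

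There is no real obstacle here; the only point worth spelling out is the reduction to the direct integral via the unitary $\Phi$ and the verification that the hypotheses of Proposition~\ref{prop:criterion_commutativity_direct_integral} are met on $\Omega$ (non-zero separable fibers, $\sigma$-finiteness of $\hnu$ on $\Omega$, which follows from $\sigma$-compactness of $\hG$). With those observations in place, the equivalence is a direct corollary.
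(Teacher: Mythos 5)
Your proposal follows exactly the paper's own argument: the paper's proof consists of the single line ``Follows from Proposition~\ref{prop:criterion_commutativity_direct_integral} and Theorem~\ref{thm:general_V_decomposition}.'' You have simply spelled out the intermediate steps (unitarity of $\Phi$ preserving commutativity, verification of the hypotheses of the direct-integral criterion on $\Omega$), all of which are correct.
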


\begin{proof}
Follows from Proposition~\ref{prop:criterion_commutativity_direct_integral}
and Theorem~\ref{thm:general_V_decomposition}.
\end{proof}

\section{Translation-invariant operators in RKHS}
\label{sec:invar_RKHS}

In this section, we consider the case when $H$ is a RKHS over $G\times Y$.
We freely use some basic properties of RKHS.
See, for example,
Aronszajn~\cite{Aronszajn1950}
or Agler and McCarthy~\cite{AglerMcCarthy2002}.

First, we give a simple criterion for $\rho_{G\times Y}$-invariance of $H$ in terms of the reproducing kernel.
This is a particular case of
\cite[Proposition~4.1]{MaximenkoTelleria2020}.

\begin{prop}\label{prop:K_shift}
Let $G$ and $Y$ satisfy Assumption~\ref{assumption:GY},
and let $H$ be a RKHS over $G\times Y$,
with reproducing kernel $(K_{x,y})_{(x,y)\in G\times Y}$.
Then the following conditions are equivalent.
\begin{enumerate}[label=(\alph*)]
\item $\rho_{G\times Y}(H)\subseteq H$ for every $a$ in $G$.
\item $P \rho_{G\times Y}(a)=\rho_{G\times Y}(a) P$
for every $a$ in $G$,
where $P$ is the orthogonal projection on $L^2(G\times Y)$ such that $P(L^2(G\times Y))=H$.
\item For every $x,u$ in $G$ and every $y,v$ in $Y$,
\begin{equation}
\label{eq:K_shift}
K_{x,y}(u,v)=K_{0,y}(u-x,v).
\end{equation}
\item For every $a,x$ in $G$ and every $y$ in $Y$,
\begin{equation}
\label{eq:translate_K}
\rho_{G\times Y}(a) K_{x,y}=K_{a+x,y}.
\end{equation}
\end{enumerate}
\end{prop}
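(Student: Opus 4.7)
The plan is to run the cycle (a)$\Leftrightarrow$(b), (c)$\Leftrightarrow$(d), (a)$\Rightarrow$(d), (d)$\Rightarrow$(a), using only elementary manipulations with the reproducing kernel together with the unitarity of $\rho_{G\times Y}(a)$ on $L^2(G\times Y)$.

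The equivalence (a)$\Leftrightarrow$(b) is standard: because $\rho_{G\times Y}$ is a unitary representation of a group, $\rho_{G\times Y}(-a)=\rho_{G\times Y}(a)^\ast$, so the family $\{\rho_{G\times Y}(a):a\in G\}$ leaves $H$ invariant if and only if it leaves both $H$ and $H^\perp$ invariant, which in turn is equivalent to commutation with the orthogonal projection $P$. The equivalence (c)$\Leftrightarrow$(d) is a direct unwinding of~\eqref{eq:rho_GY_def}: evaluating $(\rho_{G\times Y}(a) K_{x,y})(u,v) = K_{x,y}(u-a,v)$ and comparing with $K_{a+x,y}(u,v)$, one sees that (d) is equivalent to the identity $K_{x,y}(u-a,v)=K_{a+x,y}(u,v)$ for all arguments. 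Setting $x=0$ and relabeling $a\mapsto x$ produces (c), while the reverse substitution recovers (d).

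For (a)$\Rightarrow$(d), fix $a,x\in G$ and $y\in Y$. Since $\rho_{G\times Y}(-a)$ is unitary on $L^2(G\times Y)$ and, by (a), preserves $H$, for every $f\in H$ the reproducing property gives
\[
\langle f,\rho_{G\times Y}(a) K_{x,y}\rangle_H
= \langle\rho_{G\times Y}(-a) f, K_{x,y}\rangle_H
= (\rho_{G\times Y}(-a) f)(x,y)
= f(x+a,y)
= \langle f, K_{a+x,y}\rangle_H,
\]
and uniqueness of the Riesz representative forces $\rho_{G\times Y}(a)K_{x,y}=K_{a+x,y}$. Conversely, for (d)$\Rightarrow$(a), the set $\linspan\{K_{x,y}:x\in G,\ y\in Y\}$ is dense in $H$; by (d), $\rho_{G\times Y}(a)$ sends each generator to another generator, hence maps this dense subspace into $H$. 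Since $H$ is closed in $L^2(G\times Y)$ and $\rho_{G\times Y}(a)$ is $L^2$-continuous, we conclude $\rho_{G\times Y}(a)(H)\subseteq H$.

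The only mild technical point, which I expect to be the main thing worth flagging rather than a serious obstacle, is the compatibility between the $H$-inner product used in the reproducing property and the $L^2$-inner product used for the closure argument. This is automatic under Assumption~\ref{assumption:H}, since $H$ is assumed to be a closed subspace of $L^2(G\times Y)$ so that $\langle\cdot,\cdot\rangle_H$ is precisely the restriction of $\langle\cdot,\cdot\rangle_{L^2(G\times Y)}$; once this is recorded, all four implications go through routinely.
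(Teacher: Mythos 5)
Your proof is correct, and it is a reasonable reconstruction: the paper itself does not prove this proposition but cites it as a particular case of Proposition~4.1 in the reference \cite{MaximenkoTelleria2020}, so there is no in-text argument to compare against. Your four implications (a)$\Leftrightarrow$(b), (c)$\Leftrightarrow$(d), (a)$\Rightarrow$(d), (d)$\Rightarrow$(a) are each sound: (a)$\Leftrightarrow$(b) is the standard reducibility statement for a self-adjoint family, (c)$\Leftrightarrow$(d) is a direct unwinding of~\eqref{eq:rho_GY_def}, (a)$\Rightarrow$(d) is the correct use of the reproducing property together with unitarity of $\rho_{G\times Y}(-a)$, and (d)$\Rightarrow$(a) is the standard density-plus-closedness argument.

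One small inaccuracy in your final remark: you appeal to Assumption~\ref{assumption:H} to justify that $\langle\cdot,\cdot\rangle_H$ is the restriction of the $L^2$-inner product, but Assumption~\ref{assumption:H} already postulates the $\rho_{G\times Y}$-invariance of $H$, which is precisely condition~(a); invoking it would make the proposition circular. What you actually need is only that $H$ is a closed subspace of $L^2(G\times Y)$ with the inherited inner product --- this is implicit in the proposition's statement of condition~(b), where $P$ is declared to be the orthogonal projection of $L^2(G\times Y)$ onto $H$. Once you cite that instead of Assumption~\ref{assumption:H}, the argument is clean.
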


In the rest of this section, we make the following assumption.

\begin{assumption}\label{assumption:RKHS}
Additionally to~Assumption~\ref{assumption:H},
suppose that $H$ is a RKHS over $G\times Y$,
and the reproducing kernel
$(K_{x,y})_{(x,y)\in G\times Y}$ satisfies
\begin{equation}\label{eq:K_integral_bounded}
\forall y\in Y\qquad
\sup_{v\in Y} \int_G |K_{0,y}(u,v)|\,\dif\nu(u)<+\infty.
\end{equation}
\end{assumption}

For every $\xi$ in $\hG$ and every $y,v$ in $Y$, we define $L_{\xi,y}(v)$ by~\eqref{eq:L_definition}.
In particular, \eqref{eq:K_integral_bounded} implies that the integral in \eqref{eq:L_definition} exists in the Lebesgue sense,
and for every $y,v$ in $Y$ the function $\xi\mapsto L_{\xi,y}(v)$ is continuous.

The goal of this section is to provide more constructive descriptions of the projections $\hP_\xi$ and spaces $\hH_\xi$ than in Section~\ref{sec:invar_H}.

Using Proposition~\ref{prop:K_shift}
and the Hermitian property of $K$
we can write $P$ as
\begin{equation}\label{eq:P_as_convolution}
(P f)(x,y)= \int_Y \int_G f(u,v) K_{0,v}(x-u,y)\,\dif\nu(u)\,\dif\la(v).
\end{equation}
The inner integral in the right-hand side of~\eqref{eq:P_as_convolution} is a convolution.
The following lemma can be viewed
as an application of the convolution theorem to this inner integral.
The technical assumptions on $(G,\mu)$, $(Y,\la)$, and $K$ allow us to interchange the order of integration.

\begin{lem}\label{lem:hP_via_L_for_L1_functions}
Let $f\in L^1(G\times Y)\cap L^2(G\times Y)$. Then for every $\xi$ in $\hG$ and every $y$ in $Y$,
\begin{equation}\label{eq:Phi_P_via_L}
((F\otimes I) P f)(\xi,y)
= \int_Y
((F\otimes I) f)(\xi,v)\,
\overline{L_{\xi,y}(v)}\,\dif\la(v).
\end{equation}
Equivalently,
\begin{equation}\label{eq:Phi_P_via_L_inner_product}
((F\otimes I) P f)(\xi,y)
= \langle ((F\otimes I) f)(\xi,\cdot), L_{\xi,y} \rangle_{L^2(Y)}.
\end{equation}
\end{lem}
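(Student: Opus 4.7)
The plan is to start from the explicit formula \eqref{eq:P_as_convolution} for $P$, apply the Fourier transform $F\otimes I$ to both sides, swap the order of integration, and recognize the inner $G$-integral as (essentially) a Fourier transform of a translate, which will produce $\overline{L_{\xi,y}(v)}$ after using the Hermitian symmetry of $K$ together with the translation invariance~\eqref{eq:K_shift}.

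More concretely, I would first write
\[
((F\otimes I) P f)(\xi,y)
= \int_G \overline{\xi(x)}\int_Y\int_G f(u,v)\,K_{0,v}(x-u,y)\,\dif\nu(u)\,\dif\la(v)\,\dif\nu(x),
\]
and then invoke Fubini--Tonelli to reorder the integrals. The justification of Fubini is where the hypothesis \eqref{eq:K_integral_bounded} enters: performing the change of variable $x\mapsto x+u$ and using the Hermitian symmetry $K_{0,v}(x,y)=\overline{K_{x,y}(0,v)}$ together with \eqref{eq:K_shift} gives $|K_{0,v}(x,y)|=|K_{0,y}(-x,v)|$, so
\[
\int_G |K_{0,v}(x-u,y)|\,\dif\nu(x)
=\int_G |K_{0,y}(u',v)|\,\dif\nu(u')
\le C_y\eqdef\sup_{v\in Y}\int_G |K_{0,y}(u',v)|\,\dif\nu(u')<+\infty,
\]
which combined with $f\in L^1(G\times Y)$ bounds the triple integral by $C_y\|f\|_{L^1}$.

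Next, inside the reordered integral the inner $G$-integral becomes
\[
\int_G \overline{\xi(x)}\,K_{0,v}(x-u,y)\,\dif\nu(x)
=\overline{\xi(u)}\int_G\overline{\xi(x)}\,K_{0,v}(x,y)\,\dif\nu(x).
\]
To identify the remaining integral with $\overline{L_{\xi,y}(v)}$, I would combine the Hermitian symmetry of the reproducing kernel with~\eqref{eq:K_shift} to obtain $K_{0,v}(x,y)=\overline{K_{0,y}(-x,v)}$, and then use $\overline{\xi(x)}=\xi(-x)$ together with the change of variable $x\mapsto -x$ (Haar measure is invariant under inversion on an LCAG); the result is exactly $\overline{L_{\xi,y}(v)}$ by the definition~\eqref{eq:L_definition} of $L$.

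Substituting back, the integral factors as
\[
\int_Y \overline{L_{\xi,y}(v)}\Bigl(\int_G f(u,v)\,\overline{\xi(u)}\,\dif\nu(u)\Bigr)\dif\la(v)
=\int_Y ((F\otimes I)f)(\xi,v)\,\overline{L_{\xi,y}(v)}\,\dif\la(v),
\]
which is~\eqref{eq:Phi_P_via_L}; the equivalent form~\eqref{eq:Phi_P_via_L_inner_product} is then just the definition of the $L^2(Y)$ inner product. The main obstacle is bookkeeping: keeping track of complex conjugates and signs when passing between $K_{0,v}(x,y)$, $K_{0,y}(u,v)$, and $L_{\xi,y}(v)$, and ensuring the Fubini step is fully justified under the hypothesis~\eqref{eq:K_integral_bounded} (rather than, say, under a stronger joint integrability assumption on $K$).
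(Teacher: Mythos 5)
Your proposal is correct and follows essentially the same route as the paper: start from the integral formula for $P$, apply $F\otimes I$, justify the interchange of integrals with the absolute-integrability estimate supplied by hypothesis~\eqref{eq:K_integral_bounded}, and recognize the inner $G$-integral (after factoring out $\overline{\xi(u)}$ and using Hermitian symmetry plus~\eqref{eq:K_shift}) as $\overline{L_{\xi,y}(v)}$. The only differences from the paper's proof are in which variable substitution is performed first and exactly when Hermitian symmetry is invoked, i.e.\ cosmetic bookkeeping.
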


\begin{proof}
Step 1.
We denote by $C_y$ the supremum in \eqref{eq:K_integral_bounded}.
Let us estimate from above the following triple integral:
\[
J \eqdef \int_Y \int_G \int_G
|f(u,v)|\,|K_{x,y}(u,v)|\,\dif\nu(x)\,\dif\nu(u)\,\dif\la(v).
\]
We write $K_{x,y}(u,v)$ as $K_{0,y}(u-x,y)$,
make the change of variables $t=u-x$ (where $u$ is a fixed parameter),
apply Tonelli's theorem and assumption~\eqref{eq:K_integral_bounded}:
\begin{align*}
J
&= \int_Y \int_G \int_G
|f(u,v)|\,|K_{0,y}(t,v)|\,\dif\nu(t)\,\dif\nu(u)\,\dif\la(v)
\\
&= \int_G \int_Y
|f(u,v)|\,\left(\int_G |K_{0,y}(t,v)|\,\dif\nu(t)\right)\dif\la(v)\,\dif\nu(u)
\\
&\le C_y \int_G \int_Y
|f(u,v)|\,\dif\la(v)\,\dif\nu(u)
= C_y \|f\|_{L^1(G\times Y)}
< +\infty.
\end{align*}
Step 2. Due to Step~1, we can apply Fubini's theorem to the following integrals.
\begin{align*}
((F\otimes I) P f)(\xi,y)
&=\int_G \int_G \int_Y
\overline{\xi(x)} f(u,v) \overline{K_{x,y}(u,v)}\,\dif\la(v)\,\dif\nu(u)\,\dif\nu(x)
\\
&=\int_Y \int_G \overline{\xi(u)} f(u,v)
\overline{
\left(\,\int_G \overline{\xi(u-x)} K_{0,v}(u-x,y)\,\dif\nu(x)\right)}\dif\nu(u)\,\dif\la(v)
\\
&=
\int_Y ((F\otimes I) f)(\xi,v) \overline{L_{\xi,y}(v)}\,\dif\la(v).
\qedhere
\end{align*}
\end{proof}

\begin{lem}
\label{lem:L_repr_property}
For every $y,v$ in $Y$ and every $\xi$ in $\hG$,
\begin{equation}\label{eq:L_repr_property}
L_{\xi,y}(v)=\langle L_{\xi,y},L_{\xi,v}\rangle_{L^2(Y)}.
\end{equation}
\end{lem}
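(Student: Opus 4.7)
The identity asserts that $L_{\xi,\cdot}$ reproduces itself, so the natural route is a direct Fubini-style computation that unfolds both factors of $L$ on the right-hand side and collapses them using the reproducing property of $K$ together with translation-invariance. In brief, I would take the $s$-Fourier transform of the identity obtained from the reproducing property for $K_{0,y}$ at the point $(s,v)$, and recognize the outcome as $\langle L_{\xi,y},L_{\xi,v}\rangle_{L^2(Y)}$.

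I would begin by combining the reproducing property of $K$ with the fact that the inner product on $H$ coincides with that of $L^2(G\times Y)$ and with the translation-invariance $K_{s,v}(u,w)=K_{0,v}(u-s,w)$ from Proposition~\ref{prop:K_shift}. For every $s\in G$ this yields
\[
K_{0,y}(s,v)=\langle K_{0,y},K_{s,v}\rangle_{L^2(G\times Y)}=\int_Y\int_G K_{0,y}(u,w)\,\overline{K_{0,v}(u-s,w)}\,\dif\nu(u)\,\dif\la(w).
\]
Multiplying by $\overline{\xi(s)}$ and integrating over $s\in G$ turns the left-hand side into $L_{\xi,y}(v)$ by \eqref{eq:L_definition}. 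I would then interchange the order of integration in the resulting triple integral and evaluate the innermost $s$-integral via the substitution $s'=u-s$, using $\overline{\xi(u-s')}=\overline{\xi(u)}\xi(s')$ to collapse the pair of characters to $\overline{\xi(u)}$. This identifies the inner integral as $\overline{\xi(u)}\,\overline{L_{\xi,v}(w)}$; plugging back and recognizing the remaining $u$-integral as $L_{\xi,y}(w)$ gives
\[
L_{\xi,y}(v)=\int_Y\overline{L_{\xi,v}(w)}\int_G\overline{\xi(u)}K_{0,y}(u,w)\,\dif\nu(u)\,\dif\la(w)=\int_Y L_{\xi,y}(w)\overline{L_{\xi,v}(w)}\,\dif\la(w),
\]
which is exactly $\langle L_{\xi,y},L_{\xi,v}\rangle_{L^2(Y)}$.

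The main obstacle will be to justify the Fubini interchange. I would rely on the bound~\eqref{eq:K_integral_bounded} in Assumption~\ref{assumption:RKHS}: by translation-invariance of the Haar measure, for fixed $u,w$ we have $\int_G|K_{0,v}(u-s,w)|\,\dif\nu(s)\le C_v$, and Tonelli then shows that the triple integral of the absolute value is controlled by $C_v\|K_{0,y}\|_{L^1(G\times Y)}$. In the most common situation where this $L^1$ norm is finite, Fubini applies directly; in the general $\sigma$-finite setting I would regularize with cutoffs $\phi_n\in C_c(G)$ replacing $\overline{\xi(s)}$ by $\phi_n(s)\overline{\xi(s)}$, apply Fubini for each $n$, and pass to the limit using dominated convergence with the pointwise bound $|L_{\xi,v}(w)|\le C_v$ and the $L^1(G)$-integrability of $s\mapsto K_{0,y}(s,v)$ guaranteed by~\eqref{eq:K_integral_bounded}. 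The rest of the proof is a clean bookkeeping exercise, and the character identity plus the reproducing property do essentially all of the conceptual work.
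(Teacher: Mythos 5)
Your proposal takes essentially the same route as the paper: the paper's proof consists of applying Lemma~\ref{lem:hP_via_L_for_L1_functions} to $f=K_{0,y}$, and that lemma's content is exactly the computation you carry out inline (unfold the $L^2(G\times Y)$ inner product via translation-invariance, multiply by $\overline{\xi(s)}$ and integrate, split the character by the substitution $s\mapsto u-s$, and recognize the two copies of $L$). So the core computation is correct and is not a different argument from the paper's.

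One caution on the Fubini justification, which applies equally to the paper and to you. Lemma~\ref{lem:hP_via_L_for_L1_functions} requires $f\in L^1(G\times Y)\cap L^2(G\times Y)$, and the Tonelli estimate it runs — like your own — controls the triple integral by $C_v\,\|K_{0,y}\|_{L^1(G\times Y)}$. But~\eqref{eq:K_integral_bounded} alone does not yield $K_{0,y}\in L^1(G\times Y)$ when $\la(Y)=\infty$: in Example~\ref{example:vertical_analytic}, $\int_{\bR_+}\int_{\bR}|K_{0,y}|\,\dif u\,\dif v=\int_0^\infty (v+y)^{-1}\,\dif v=\infty$. Your proposed $s$-cutoff does not repair this: it lets you apply Fubini at each truncation level, but when you pass $n\to\infty$ on the unfolded side the natural dominating function is $C_v\,|K_{0,y}(u,w)|$ on $G\times Y$, and you are back to needing $K_{0,y}\in L^1(G\times Y)$. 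This is a genuine loose end, but since the paper's proof inherits the very same tacit assumption, it does not distinguish your argument from theirs; it is something both would need to address (e.g.\ by an extra hypothesis or a different exhaustion in the $Y$-variable).
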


\begin{proof}
Follows from Lemma~\ref{lem:hP_via_L_for_L1_functions} applied to $f=K_{0,y}$.
\end{proof}

\medskip
The following general fact can be seen as a corollary from Moore--Aronszajn theorem. We have not found the explicit statement of this fact in the bibliography.
In many applications, $\cH_1$ is a space of square-integrable \emph{functions}, rather than their equivalence classes.

\begin{prop}[about RKHS generated by a reproducing family in a complete space with pre-inner product]
\label{prop:RKHS_embedded_into_HS}
Let $X$ is a set and $\cH_1$ be a space of functions $X\to\bC$ with a pre-inner product
$\langle\cdot,\cdot\rangle_{\cH_1}$,
not necessarily strictly positive.
We suppose that $\cH_1$ is complete with respect to $\langle\cdot,\cdot\rangle_{\cH_1}$.
Let $(\cK_x)_{x\in X}$ be a family in $\cH_1$ such that
\begin{equation}\label{eq:cK_reproducing_property}
\forall x,y\in X\qquad \cK_x(y)
= \langle \cK_x, \cK_y \rangle_{\cH_1}.
\end{equation}
Let
\[
\cH_2 \eqdef \{f\in\cH_1\colon\quad
\forall x\in X\quad f(x)=\langle f,\cK_x\rangle\}.
\]
Then $\cH_2$ is a RKHS
and $(\cK_x)_{x\in X}$ is the reproducing kernel of $\cH_2$.
The rule
\begin{equation}\label{eq:cP_def}
(\cP f)(x) \eqdef \langle f,\cK_x\rangle_{\cH_1},
\end{equation}
defines an orthogonal projection in $\cH_1$,
and $\cP(\cH_1)=\cH_2$.
\end{prop}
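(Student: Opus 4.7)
The plan is to first promote $\cH_2$ to a genuine Hilbert space and RKHS, and then build the orthogonal projection $\cP$ via the Riesz representation theorem. The starting point is that hypothesis \eqref{eq:cK_reproducing_property}, read with $x$ fixed and $y$ varying, is exactly the statement that every $\cK_x$ lies in $\cH_2$. Applying Cauchy--Schwarz to the reproducing identity $f(x)=\langle f,\cK_x\rangle_{\cH_1}$ defining $\cH_2$, and using $\|\cK_x\|_{\cH_1}^2=\cK_x(x)$ from \eqref{eq:cK_reproducing_property}, yields the pointwise estimate $|f(x)|^2\le \|f\|_{\cH_1}^2\,\cK_x(x)$. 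Consequently the null ideal $N=\{f\in\cH_1\colon\|f\|_{\cH_1}=0\}$ meets $\cH_2$ only at $0$, so the pre-inner product restricts to a genuine inner product on $\cH_2$, and every point evaluation is continuous there.

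Next I would show $\cH_2$ is complete. Given a Cauchy sequence $(f_n)_{n\in\bN}$ in $\cH_2$, the bound above makes each scalar sequence $(f_n(x))_n$ Cauchy in $\bC$, yielding a pointwise limit $f^\ast\colon X\to\bC$. Completeness of $\cH_1$ produces some $g\in\cH_1$ with $\|f_n-g\|_{\cH_1}\to 0$, and continuity of the inner product forces $\langle g,\cK_x\rangle_{\cH_1}=\lim_n f_n(x)=f^\ast(x)$ for every $x$. The technical step is to check that $f^\ast$ itself lies in $\cH_1$ and that $\|f^\ast-g\|_{\cH_1}=0$; once this is done, $f^\ast$ qualifies as a limit of $(f_n)$ in $\cH_1$, and the identity $f^\ast(x)=\langle f^\ast,\cK_x\rangle_{\cH_1}$ places $f^\ast$ in $\cH_2$ with $\|f_n-f^\ast\|_{\cH_1}\to 0$. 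Combined with the previous paragraph, this makes $\cH_2$ an RKHS with reproducing kernel $(\cK_x)_{x\in X}$.

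For the projection, fix $f\in\cH_1$ and observe that $h\mapsto\langle h,f\rangle_{\cH_1}$ is a bounded conjugate-linear functional on the Hilbert space $\cH_2$, of norm at most $\|f\|_{\cH_1}$. The Riesz representation theorem applied inside $\cH_2$ produces a unique $\cP f\in\cH_2$ with $\langle h,f\rangle_{\cH_1}=\langle h,\cP f\rangle_{\cH_1}$ for every $h\in\cH_2$. Specializing $h=\cK_y$ and using the reproducing property in $\cH_2$ gives $(\cP f)(y)=\langle f,\cK_y\rangle_{\cH_1}$, which is formula \eqref{eq:cP_def}. Idempotence $\cP^2=\cP$ follows from Riesz uniqueness (for $g\in\cH_2$ the identity is satisfied by $g$ itself, so $\cP g=g$); self-adjointness follows by chaining the defining identity twice, $\langle\cP f,g\rangle_{\cH_1}=\langle\cP f,\cP g\rangle_{\cH_1}=\langle f,\cP g\rangle_{\cH_1}$; and $\cP(\cH_1)=\cH_2$ by construction (surjectivity from $\cP g=g$ on $\cH_2$). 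The main obstacle I anticipate is the technical point in the second paragraph, namely confirming that the pointwise limit $f^\ast$ sits in $\cH_1$ and represents the same class as $g$ modulo $N$. In the applications the authors have in mind, where $\cH_1$ is a space of genuine square-integrable functions rather than a.e.\ equivalence classes, this can be handled by extracting an a.e.\ convergent subsequence from the $\cH_1$-convergence of $(f_n)$ and reconciling it with the everywhere-pointwise convergence forced by the reproducing bound.
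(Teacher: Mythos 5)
Your route is genuinely different from the paper's. You try to establish directly that $\cH_2$ is a Hilbert space (noting that the restricted pre-inner product is strictly positive, then attempting completeness) and then obtain $\cP$ from the Riesz representation theorem applied inside $\cH_2$. The paper never tackles completeness of $\cH_2$ head-on: it invokes the Moore--Aronszajn theorem to build an auxiliary RKHS $\cH_4$ (the span of $\{\cK_x\colon x\in X\}$ completed via pointwise limits), shows that the image of $\cH_4$ under $\pi_1$ is a closed subspace of the quotient Hilbert space $\cH_1/\cH_0$, takes the orthogonal projection $\cP_1$ onto that image, lifts it to a map $\cP$ on $\cH_1$, and only at the very end proves $\cH_4=\cH_2$. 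The technical point you rightly flag as the main obstacle --- showing that the pointwise limit $f^\ast$ of a Cauchy sequence in $\cH_2$ actually belongs to $\cH_1$ --- is precisely the difficulty the paper's detour is built to absorb; in the paper the analogous burden is buried in the unelaborated assertion $\cH_4\subseteq\cH_2$ (which tacitly presumes $\cH_4\subseteq\cH_1$), and it rests on the same kind of $L^2$-style reconciliation of norm convergence with pointwise convergence that you sketch at the end, so neither write-up is fully self-contained at that one point. Granting that shared implicit assumption, each approach buys something: yours is shorter, avoids Moore--Aronszajn, and reads the reproducing property straight off the Riesz representative; the paper's produces $\cP f$ as an element of the already-constructed RKHS $\cH_4$, so boundedness, idempotence, and self-adjointness of $\cP$ come for free from the projection theorem in $\cH_1/\cH_0$ rather than from the Riesz chain you write down.
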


\begin{proof}
The main challenge is to prove that $\cP f\in\cH_1$ for every $f$ in $\cH_1$.
We will get this fact indirectly, using the existence of an orthogonal projection onto a closed subspace of a Hilbert space.
Since $\langle\cdot,\cdot\rangle_{\cH_1}$ is not necessarily strictly positive, we have to pass from elements of $\cH_1$ to equivalent classes and return back.

Let $\cH_0\eqdef\{f\in\cH_1\colon\ \langle f,f\rangle_{\cH_1}=0\}$.
Then $\cH_0$ is a closed subspace of $\cH_1$
and $\cH_1/\cH_0$ is a Hilbert space
(with a strictly positive inner product).
We denote by $\pi_1$ the canonical projection $\cH_1\to\cH_1/\cH_0$.

Condition~\eqref{eq:cK_reproducing_property} easily implies that $(\cK_x)_{x\in X}$ is a positive definite kernel.
Let $\cH_3$ be the span of $\{\cK_x\colon\ x\in X\}$ and $\cH_4$ be the RKHS constructed in the Moore--Aronszajn theorem.
Due to~\eqref{eq:cK_reproducing_property}, the inner product in $\cH_4$ is inherited from $\cH_1$.
The elements of $\cH_4$ are pointwise limits of Cauchy sequences in $\cH_3$.
At this point, we know that $\cH_4\subseteq\cH_2$.

Since $\pi_1(\cH_4)$ is a closed subset of $\cH_1/\cH_0$, there exists an orthogonal projection $\cP_1$ in $\cH_1/\cH_0$ such that
$\cP_1(\cH_1/\cH_0)=\pi_1(\cH_4)$.
Given $f$ in $\cH_1$, let $g\in\cH_4$ be such a function that $\cP_1(\pi_1(f))=\pi_1(g)$.
Then, for every $x$ in $X$,
\begin{equation}\label{eq:cP_via_cP1}
\begin{aligned}
g(x)
&=\langle g,\cK_x\rangle_{\cH_1}
=\langle \pi_1(g),\pi_1(\cK_x)\rangle_{\cH_1/\cH_0}
=\langle \cP_1(\pi_1(f)),\pi_1(\cK_x)\rangle_{\cH_1/\cH_0}
\\
&=\langle \pi_1(f),\cP_1(\pi_1(\cK_x))\rangle_{\cH_1/\cH_0}
=\langle \pi_1(f),\pi_1(\cK_x)\rangle_{\cH_1/\cH_0}
=\langle f,\cK_x\rangle_{\cH_1}
=(\cP f)(x).
\end{aligned}
\end{equation}
Thereby we get $\cP f=g\in\cH_4$.
So, $\cP$ is a well-defined function $\cH_1\to\cH_1$.
Computation~\eqref{eq:cP_via_cP1} means that
$\pi_1\circ\cP=\cP_1\circ\pi_1$.
Since $\cP_1$ is a bounded selfadjoint linear operator
and $\pi_1$ is a linear isometry,
we easily conclude that $\cP$ is a bounded autoadjoint linear operator.

If $f\in\cH_2$ and $g\in\cH_4$ such that
$\pi_1(g)=\cP_1(\pi_1(f))$, then the definition of $\cH_2$ and the reproducing property in $\cH_4$ imply that $f=g$.
Hence, $\cH_4=\cH_2$.
Finally, we can conclude that $\cP(\cH_1)=\cH_2$
and $\cP^2=\cP$.
\end{proof}

For every $\xi$ in $\hG$, we define $\hP_\xi\colon L^2(Y)\to L^2(Y)$ by
\begin{equation}\label{eq:hPxi_def}
(\hP_\xi h)(y)
\eqdef \langle h,L_{\xi,y}\rangle_{L^2(Y)}
=
\int_Y h(v) \overline{L_{\xi,y}(v)}\,\dif\la(v).
\end{equation}
Then, we denote by~$\hH_\xi$ the image of $\hP_\xi$:
\begin{equation}
\label{eq:hHxi_def}
\hH_\xi\eqdef \hP_\xi(L^2(Y)).
\end{equation}
We will to prove that \eqref{eq:hPxi_def} is equivalent to the definition of $\hP_\xi$ in Section~\ref{sec:invar_H}.

\begin{thm}
\label{thm:hH_xi_via_L}
Let Assumption~\ref{assumption:RKHS} hold.
For every $\xi$ in $\hG$,
$\hP_\xi$ is an orthogonal projection in $L^2(Y)$
and $\hH_\xi$ is a RKHS with reproducing kernel $(L_{\xi,y})_{y\in Y}$.
For each $\xi$ in $\hG$,
\begin{equation}\label{eq:hH_xi_via_comb_L}
\hH_\xi
=
\clos_{L^2(Y)}(\linspan(\{L_{\xi,y}\colon y\in Y\})),
\end{equation}
where the closure is understood as the set of the pointwise limits of Cauchy sequences.
Moreover,
\begin{equation}\label{eq:hP_as_direct_integral}
\hP = \int_{\hG} \hP_\xi\,\dif\hnu(\xi),
\end{equation}
i.e., for every $g$ in $L^2(\hG\times Y)$,
\begin{equation}\label{eq:hP_via_L}
(\hP g)(\xi,y)
=
\langle g(\xi,\cdot), L_{\xi,y}\rangle_{L^2(Y)}
=
\int_Y g(\xi,v)\,\overline{L_{\xi,y}(v)}\,\dif\la(v)\qquad(\xi\in\hG,\ y\in Y).
\end{equation}
\end{thm}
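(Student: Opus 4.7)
The plan is to reduce the theorem to an application of Proposition~\ref{prop:RKHS_embedded_into_HS} carried out separately in each fiber $\xi$, together with a density argument that matches the resulting formula with the projection $\hP$ from Section~\ref{sec:invar_H}.

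For the fiberwise statements, I would fix $\xi\in\hG$ and apply Proposition~\ref{prop:RKHS_embedded_into_HS} with $\cH_1=L^2(Y,\la)$, $X=Y$, and reproducing family $\cK_y\eqdef L_{\xi,y}$. The required identity $\cK_y(v)=\langle\cK_y,\cK_v\rangle_{L^2(Y)}$ is exactly Lemma~\ref{lem:L_repr_property}, and the projection $\cP$ produced by the proposition is precisely the operator $\hP_\xi$ of~\eqref{eq:hPxi_def}. The proposition then delivers simultaneously that $\hP_\xi$ is an orthogonal projection on $L^2(Y)$, that its image $\hH_\xi$ is a RKHS with reproducing kernel $(L_{\xi,y})_{y\in Y}$, and the description~\eqref{eq:hH_xi_via_comb_L} of $\hH_\xi$ as the closure, in the pointwise sense, of $\linspan\{L_{\xi,y}\colon y\in Y\}$.

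For~\eqref{eq:hP_as_direct_integral} I would introduce the candidate operator $T$ on $L^2(\hG\times Y)=\int^{\oplus}_{\hG}L^2(Y)\,\dif\hnu$ defined by $(Tg)(\xi,\cdot)\eqdef\hP_\xi(g(\xi,\cdot))$; explicitly, $(Tg)(\xi,y)=\langle g(\xi,\cdot),L_{\xi,y}\rangle_{L^2(Y)}$. Once $T$ is known to be a bounded operator, it is automatically a contraction as a direct integral of orthogonal projections, and Lemma~\ref{lem:hP_via_L_for_L1_functions} says $Tg=\hP g$ for every $g=(F\otimes I)f$ with $f\in L^1(G\times Y)\cap L^2(G\times Y)$. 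Since $L^1\cap L^2$ is dense in $L^2(G\times Y)$ and $F\otimes I$ is unitary, these $g$ form a dense subset of $L^2(\hG\times Y)$, so by continuity $T=\hP$, which is exactly~\eqref{eq:hP_as_direct_integral}; reading this pointwise in $\xi$ yields~\eqref{eq:hP_via_L}.

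I expect the main obstacle to be making the operator $T$ rigorous, which amounts to showing that $(\hP_\xi)_{\xi\in\hG}$ is a measurable field of operators. Concretely, for each $h\in L^2(Y)$ one needs $(\xi,y)\mapsto\int_Y h(v)\overline{L_{\xi,y}(v)}\,\dif\la(v)$ to be measurable on $\hG\times Y$; this should follow from the continuity in $\xi$ of $L_{\xi,y}(v)$ noted after~\eqref{eq:L_definition}, together with the joint measurability of $K_{0,y}(u,v)$ implicit in Assumption~\ref{assumption:RKHS} and a standard Fubini argument. A minor subtlety is that Proposition~\ref{prop:RKHS_embedded_into_HS} is stated for a space of honest functions rather than equivalence classes, so the application to $L^2(Y)$ should be read with a fixed pointwise representative of each $L_{\xi,y}$; the resulting $\hH_\xi$ is then an honest function space in which the kernel identities hold pointwise.
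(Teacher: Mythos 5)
Your proof follows essentially the same route as the paper: apply Proposition~\ref{prop:RKHS_embedded_into_HS} fiberwise via Lemma~\ref{lem:L_repr_property} for the per-$\xi$ statements, then define the candidate operator by the right-hand side of~\eqref{eq:hP_via_L}, bound it using $\|\hP_\xi\|\le 1$, and match it with $\hP$ on the dense set $(F\otimes I)\bigl(L^1(G\times Y)\cap L^2(G\times Y)\bigr)$ through Lemma~\ref{lem:hP_via_L_for_L1_functions}. The only difference is that you explicitly flag the measurability of $\xi\mapsto\hP_\xi$ needed for the candidate operator to map into $L^2(\hG\times Y)$, which the paper's proof leaves implicit; that is a legitimate point of care rather than a divergence in method (it can also be absorbed into the density argument, since on the dense set the candidate coincides with $\hP g$, which is already known to be measurable).
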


\begin{proof}
The first statements follow from 
Lemma~\ref{lem:L_repr_property} and Proposition~\ref{prop:RKHS_embedded_into_HS}.
Let $A$ be the operator in $L^2(\hG\times Y)$ defined by the the right-hand side of~\eqref{eq:hP_as_direct_integral} or~\eqref{eq:hP_via_L}.
For each $\xi\in\hG$,
$\|\hP_\xi\|\le 1$.
This easily implies that $A$ is a bounded linear operator with $\|A\|\le 1$.

By Lemma~\ref{lem:hP_via_L_for_L1_functions}, the equality $(F\otimes I) P f=A (F\otimes I) f$
holds for every $f$ in the intersection $L^2(G\times Y)\cap L^1(G\times Y)$, which is a dense subset of $L^2(G\times Y)$.
Since $(F\otimes I) P$ and $A (F\otimes I)$ are bounded linear operators,
we conclude that the equality
$(F\otimes I) P=A (F\otimes I)$
holds on the whole space $L^2(G\times Y)$.
Hence,
$\hP=(F\otimes I) P(F\otimes I)^\ast=A$.
\end{proof}

\begin{cor}\label{cor:repr_property_hH}
Let $g\in\hH$. Then for almost all $\xi$ in $\hG$ and almost all $y$ in $Y$,
\begin{equation}\label{eq:repr_property_hH}
g(\xi,y)
= \langle g(\xi,\cdot), L_{\xi,y}\rangle_{L^2(Y)}
= \int_Y g(\xi,v) \overline{L_{\xi,y}(v)}\,\dif\la(v).
\end{equation}
\end{cor}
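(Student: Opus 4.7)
The plan is to derive this directly from Theorem~\ref{thm:hH_xi_via_L}. By definition, $\hH = \hP(L^2(\hG\times Y))$, so every $g$ in $\hH$ satisfies $\hP g = g$. Applying the explicit formula~\eqref{eq:hP_via_L} from the theorem, we get
\[
g(\xi,y) = (\hP g)(\xi,y) = \langle g(\xi,\cdot), L_{\xi,y}\rangle_{L^2(Y)}
\]
for $(\hnu\times\la)$-almost every $(\xi,y)$, which by Fubini's theorem translates into the desired ``for almost every $\xi$ and almost every $y$'' statement.

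The only subtlety is that the equality $\hP g = g$ is an equality in $L^2(\hG\times Y)$, so a priori it holds only $(\hnu\times\la)$-almost everywhere in the product; one then invokes Tonelli/Fubini to re-express this as: for $\hnu$-a.e.\ $\xi$, the functions $y\mapsto g(\xi,y)$ and $y\mapsto (\hP_\xi g(\xi,\cdot))(y)$ agree $\la$-a.e. This is really the only step that requires any care, and it is routine.

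Thus the corollary is essentially a direct rewriting of $\hP g = g$ through the direct integral decomposition~\eqref{eq:hP_as_direct_integral} and the definition~\eqref{eq:hPxi_def} of $\hP_\xi$ in terms of the reproducing kernel $L_{\xi,y}$. No further integrability or density argument is needed beyond what has already been established in Theorem~\ref{thm:hH_xi_via_L}.
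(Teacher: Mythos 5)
Your proof is correct and is precisely the intended derivation: the paper states this corollary without a separate proof because it follows immediately from Theorem~\ref{thm:hH_xi_via_L} by observing that $g\in\hH$ means $\hP g=g$ and then applying formula~\eqref{eq:hP_via_L}, exactly as you do. The Tonelli/Fubini step converting ``a.e.\ on $\hG\times Y$'' into ``for a.e.\ $\xi$, a.e.\ $y$'' is indeed the only point requiring attention, and you handle it correctly.
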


\begin{rem}
The integral in \eqref{eq:repr_property_hH} is taken over $Y$,
not over the $\hG\times Y$.
In general, $\hH$ does not have to be a RKHS.
\end{rem}

\begin{prop}\label{prop:dim_hH_eq_int_L}
For every $\xi$ in $\hG$,
\begin{equation}\label{eq:dim_hH_eq_int_L}
\dim(\hH_\xi)
=\int_Y L_{\xi,y}(y)\,\dif\la(y)
=\int_Y \|L_{\xi,y}\|_{L^2(Y)}^2\,\dif\la(y).
\end{equation}
\end{prop}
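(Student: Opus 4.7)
The plan is to compute $\dim(\hH_\xi)$ as the trace of the orthogonal projection $\hP_\xi$, expressed via the diagonal of its reproducing kernel $L$. The second equality of~\eqref{eq:dim_hH_eq_int_L} is free: Lemma~\ref{lem:L_repr_property} applied with $v=y$ gives $L_{\xi,y}(y)=\langle L_{\xi,y},L_{\xi,y}\rangle_{L^2(Y)}=\|L_{\xi,y}\|^2_{L^2(Y)}$. So the real content is the first equality.

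Fix $\xi$ in $\hG$. By Proposition~\ref{prop:Hxi_measurable} and Theorem~\ref{thm:hH_xi_via_L}, the space $\hH_\xi$ is a RKHS with reproducing kernel $(L_{\xi,y})_{y\in Y}$, and admits an orthonormal basis $(q_{j,\xi})_{j=1}^{d_\xi}$. The reproducing property gives $q_{j,\xi}(y)=\langle q_{j,\xi},L_{\xi,y}\rangle_{L^2(Y)}$ for each $y$ in $Y$ and each $j$, so the expansion of $L_{\xi,y}$ in this basis is
\[
L_{\xi,y}
=\sum_{j=1}^{d_\xi}\langle L_{\xi,y},q_{j,\xi}\rangle\, q_{j,\xi}
=\sum_{j=1}^{d_\xi}\overline{q_{j,\xi}(y)}\,q_{j,\xi}\qquad(\text{convergence in }L^2(Y)).
\]
To obtain pointwise values, I take the inner product with $L_{\xi,v}$ and use the reproducing property a second time (together with continuity of the inner product):
\[
L_{\xi,y}(v)
=\langle L_{\xi,y},L_{\xi,v}\rangle_{L^2(Y)}
=\sum_{j=1}^{d_\xi}\overline{q_{j,\xi}(y)}\,\langle q_{j,\xi},L_{\xi,v}\rangle
=\sum_{j=1}^{d_\xi}\overline{q_{j,\xi}(y)}\,q_{j,\xi}(v).
\]

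Setting $v=y$ gives the non-negative pointwise identity $L_{\xi,y}(y)=\sum_{j=1}^{d_\xi}|q_{j,\xi}(y)|^2$. The final step integrates over $Y$: since all summands are non-negative, Tonelli's theorem allows exchanging sum and integral, yielding
\[
\int_Y L_{\xi,y}(y)\,\dif\la(y)
=\sum_{j=1}^{d_\xi}\int_Y |q_{j,\xi}(y)|^2\,\dif\la(y)
=\sum_{j=1}^{d_\xi}\|q_{j,\xi}\|_{L^2(Y)}^2
=\sum_{j=1}^{d_\xi}1
=d_\xi.
\]
When $d_\xi=+\infty$ both sides are $+\infty$ by the same computation. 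The only genuine subtlety is that elements of $\hH_\xi$ are $L^2$-equivalence classes; however, as $\hH_\xi$ is a RKHS (Theorem~\ref{thm:hH_xi_via_L}), the reproducing property pins down canonical pointwise values of $q_{j,\xi}$, so all pointwise manipulations are legitimate.
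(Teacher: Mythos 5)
Your proof is correct and follows the same route as the paper: express $L_{\xi,y}(y)$ as $\sum_{j=1}^{d_\xi}|q_{j,\xi}(y)|^2$ via the orthonormal basis of the RKHS fiber $\hH_\xi$, then integrate term-by-term (Tonelli) to recover $d_\xi$. You spell out the expansion of the kernel and the justification for interchanging sum and integral in more detail, but this is the paper's argument in substance.
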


\begin{proof}
This is a general formula for the dimension of the image of the orthogonal projection defined as an integral operator.
Let us outline the proof in our settings.
Recall that 
$(q_{j,\xi})_{j=1}^{d_\xi}$ is an orthonormal basis for $\hH_\xi$.
Therefore,
$L_{\xi,y}(y)=\sum_{j=1}^{d_\xi}|q_{j,\xi}(y)|^2$ and
\[
\int_Y L_{\xi,y}(y)\,\dif\la(y)
=\sum_{j=1}^{d_\xi}
\int_Y |q_{j,\xi}(y)|^2\,\dif\la(y)
=\sum_{j=1}^{d_\xi} 1
=d_\xi.
\qedhere
\]
\end{proof}

As a consequence of Proposition~\ref{prop:dim_hH_eq_int_L}, we get a constructive description of $\Omega$:
\begin{equation}
\label{eq:Omega_via_int_L}
\Omega = \left\{\xi\in\hG\colon\quad
\int_Y L_{\xi,y}(y)\,\dif\la(y)>0\right\}.
\end{equation}

\begin{thm}\label{thm:commutativity_criterion_in_RKHS}
With Assumption~\ref{assumption:RKHS}, the following conditions are equivalent.
\begin{itemize}
\item[(a)] $\cV$ is commutative.
\item[(b)] For every $\xi$ in $\Omega$,\quad $\dim(\hH_\xi)=1$.
\item[(c)] For every $\xi$ in $\Omega$,
\begin{equation}\label{eq:int_L_le_1}
\int_Y L_{\xi,y}(y)\,\dif\la(y)=1.
\end{equation}
\item[(d)] For every $\xi$ in $\Omega$ and every $y,v$ in $Y$,
\begin{equation}\label{eq:Schwarz_equality}
|L_{\xi,y}(v)|^2
=L_{\xi,y}(y)L_{\xi,v}(v).
\end{equation}
\item[(e)] There exists a family
$(q_\xi)_{\xi\in\Omega}$ in $L^2(Y)$ such that
the function $(\xi,v)\mapsto q_\xi(v)$ is measurable,
the function $q_\xi$ forms
an orthonormal basis of $\hH_\xi$, and
\begin{equation}
\label{eq:L_factorization}
L_{\xi,y}(v)=\overline{q_\xi(y)}q_\xi(v)\qquad(\xi\in\Omega,\ y,v\in Y).
\end{equation}
\end{itemize}
\end{thm}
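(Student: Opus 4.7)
My plan is to obtain the equivalences via (a)$\Leftrightarrow$(b), (b)$\Leftrightarrow$(c), (b)$\Leftrightarrow$(e), (e)$\Rightarrow$(d)$\Rightarrow$(b). The equivalence (a)$\Leftrightarrow$(b) is Proposition~\ref{prop:commutativity_criterion_H}; it is stated with ``$\hnu$-almost every'', so at the outset I replace $\Omega$ by a measurable subset of full measure where $d_\xi = 1$ (when proving $\Rightarrow$) and leave the pointwise statements (b)--(e) to be interpreted modulo the same null sets.

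For (b)$\Leftrightarrow$(c), I apply Proposition~\ref{prop:dim_hH_eq_int_L}: the integer (or $+\infty$) $\dim(\hH_\xi)$ equals $\int_Y L_{\xi,y}(y)\,\dif\la(y)$, and for $\xi\in\Omega$ it is at least $1$, so it equals $1$ iff the integral equals $1$. For (b)$\Rightarrow$(e), I invoke Proposition~\ref{prop:Hxi_measurable} to extract a measurable basis field $(q_{j,\xi})$; when $d_\xi=1$, only $q_\xi\eqdef q_{1,\xi}$ survives, giving a measurable unit vector spanning $\hH_\xi$. To derive the factorization~\eqref{eq:L_factorization} I use that $L_{\xi,y}\in\hH_\xi$, so $L_{\xi,y}=c_y q_\xi$ for some scalar, and then Lemma~\ref{lem:L_repr_property} forces
\[
c_y = \langle c_y q_\xi, q_\xi\rangle = \langle L_{\xi,y}, q_\xi\rangle,
\qquad
L_{\xi,y}(v) = \langle L_{\xi,y}, L_{\xi,v}\rangle = c_y\overline{c_v},
\]
from which $c_y = \overline{q_\xi(y)}$ after evaluating at $v=y$ and taking square roots (absorbing phase into $q_\xi$). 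Conversely, (e)$\Rightarrow$(b) is trivial since $\hH_\xi$ contains the unit vector $q_\xi$ and, by~\eqref{eq:hH_xi_via_comb_L}, is the closed span of its scalar multiples $L_{\xi,y}=\overline{q_\xi(y)}q_\xi$.

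The implication (e)$\Rightarrow$(d) is a one-line substitution: $|L_{\xi,y}(v)|^2=|q_\xi(y)|^2|q_\xi(v)|^2=L_{\xi,y}(y)L_{\xi,v}(v)$. For (d)$\Rightarrow$(b) I rewrite $L_{\xi,y}(v)$ via Lemma~\ref{lem:L_repr_property} as $\langle L_{\xi,y},L_{\xi,v}\rangle_{L^2(Y)}$, and note that (d) is exactly the equality case in the Cauchy--Schwarz inequality
\[
|\langle L_{\xi,y},L_{\xi,v}\rangle|^2\le\|L_{\xi,y}\|^2\,\|L_{\xi,v}\|^2=L_{\xi,y}(y)\,L_{\xi,v}(v);
\]
consequently any two vectors $L_{\xi,y}$, $L_{\xi,v}$ are linearly dependent. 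For $\xi\in\Omega$ some $L_{\xi,y_0}$ is nonzero (otherwise $\hH_\xi=\{0\}$ by~\eqref{eq:hH_xi_via_comb_L}, contradicting $\xi\in\Omega$), so the whole family $\{L_{\xi,y}\}_{y\in Y}$ lies in $\bC L_{\xi,y_0}$ and~\eqref{eq:hH_xi_via_comb_L} gives $\dim(\hH_\xi)=1$.

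The only real nuisance is bookkeeping: the ``for every $\xi\in\Omega$'' phrasing in (b)--(e) has to be reconciled with the ``for $\hnu$-almost every'' appearing in Proposition~\ref{prop:commutativity_criterion_H} and in the measurable-basis construction of Proposition~\ref{prop:Hxi_measurable}. I handle this by discarding a single $\hnu$-null subset of $\Omega$ at the start, so that the measurable family $q_\xi$ of (e) is defined on the resulting full-measure set, and all pointwise identities in (c), (d), (e) are understood to hold there; the reverse implications (c)$\Rightarrow$(b), (d)$\Rightarrow$(b), (e)$\Rightarrow$(b) are genuinely pointwise and need no such restriction.
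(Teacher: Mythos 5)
Your outline of the equivalences (b)$\Leftrightarrow$(c) via Proposition~\ref{prop:dim_hH_eq_int_L}, (b)$\Leftrightarrow$(e) via Proposition~\ref{prop:Hxi_measurable} and the reproducing property, and (e)$\Rightarrow$(d)$\Rightarrow$(b) via the Cauchy--Schwarz equality case, is correct and matches the paper's intent. However, there is one genuine gap, and it sits exactly where you dismiss the issue as ``bookkeeping.''

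Proposition~\ref{prop:commutativity_criterion_H} yields only that commutativity of $\cV$ is equivalent to $d_\xi=1$ for $\hnu$-almost every $\xi\in\Omega$, whereas (b)--(e) are stated for \emph{every} $\xi\in\Omega$. You handle this by discarding a null set at the outset and declaring that (c)--(e) are ``to be interpreted modulo the same null sets.'' That changes the statement: you prove a weaker theorem, not the one asserted. The step that is actually missing is the upgrade from ``a.e.\ on $\Omega$'' to ``everywhere on $\Omega$,'' and it is not free bookkeeping --- it is exactly where Assumption~\ref{assumption:RKHS} enters. Under that assumption $\xi\mapsto L_{\xi,y}(v)$ is continuous for each fixed $y,v$, and the paper invokes this continuity to pass from the a.e.\ statement to the pointwise one. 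Spelled out: by Fatou applied to the nonnegative continuous-in-$\xi$ integrand, the integer-valued function $\xi\mapsto d_\xi=\int_Y L_{\xi,y}(y)\,\dif\la(y)$ is lower semicontinuous on $\hG$, so $\{d_\xi\ge 2\}$ is open; if it were nonempty it would have positive Haar measure, contradicting $d_\xi=1$ a.e.\ on $\Omega$. Hence $d_\xi\le1$ everywhere, and $d_\xi=1$ on all of $\Omega$. Without some version of this argument the implication (a)$\Rightarrow$(b) in the stated pointwise form remains unproved. Once that is added, the rest of your proof goes through; the minor detour with ``taking square roots and absorbing phase'' in (b)$\Rightarrow$(e) is unnecessary (comparing $L_{\xi,y}(v)=c_y q_\xi(v)$ with $L_{\xi,y}(v)=c_y\overline{c_v}$ already forces $c_v=\overline{q_\xi(v)}$ whenever some $c_y\ne0$), but it is not an error.
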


\begin{proof}
The major part of the proof follows from Propositions~\ref{prop:commutativity_criterion_H} and~\ref{prop:dim_hH_eq_int_L}.
We will comment only a few missing ideas.
If \eqref{eq:int_L_le_1} holds for almost every $\xi$, then, by continuity of $L_{\xi,y}(v)$ with respect to $\xi$, it holds for every $\xi$.

Condition (d) means that the Schwarz inequality for $L_{\xi,y}$ and $L_{\xi,v}$ reduces to an equality, i.e.,
the functions $L_{\xi,y}$ and $L_{\xi,v}$ are linear dependent.
Since $y$ and $v$ are arbitrary elements of $Y$
and $\{L_{\xi,y}\colon y\in Y\}$ is a total subset of $\hH_\xi$,
(d) implies (b).

If (b) holds, then we apply Proposition~\ref{prop:Hxi_measurable} with $d_\xi=1$
and obtain a family $(q_\xi)_{\xi\in\Omega}$
such that $(\xi,v)\mapsto q_\xi(v)$ is measurable
and $q_\xi$ is an orthonormal basis of $\hH_\xi$.
The reproducing kernel of $\hH_\xi$ expresses through this orthonormal basis by~\eqref{eq:L_factorization}.
\end{proof}

\begin{rem}
\label{rem:q_via_L}
In the context of the last part of the proof,
for every $\xi$ in $\Omega$,
there exists $z$ in $Y$ and $\tau$ in $\bC$ (both depending on $\xi$) such that
$\|L_{\xi,z}\|\ne0$, $|\tau|=1$, and
\[
q_\xi=\tau\, \frac{L_{\xi,z}}{\|L_{\xi,z}\|}.
\]
This means that $q$ is essentially determined by $L$.
In many examples, a decomposition of the form~\eqref{eq:L_factorization} with a measurable function $q$ is obvious.
\end{rem}

\begin{rem}\label{rem:differences_in_RKHS}
Let us emphasize additional properties that obtain $\hP_\xi$ and $\hH_\xi$ when passing from Assumption~\ref{assumption:H} to Assumption~\ref{assumption:RKHS}.
\begin{enumerate}
\item Now $\hP_\xi$ and $\hH_\xi$ are uniquely defined \emph{for every} $\xi$, instead of almost everywhere.
\item $\hP_\xi$ and $\hH_\xi$ have
\emph{simple explicit expressions}
in terms of $(L_{\xi,y})_{y\in Y}$.
\item The elements of $\hH_\xi$, in contrast to $L^2(Y)$, can be treaten as \emph{functions}, instead of classes of equivalence.
\item We have \emph{direct formulas}~\eqref{eq:dim_hH_eq_int_L} and~\eqref{eq:Omega_via_int_L}
to compute $\Omega$ and
the dimensions of $\hH_\xi$.
\item Theorem~\ref{thm:commutativity_criterion_in_RKHS} is a \emph{constructive criterion} for the commutativity of $\cV$.
\end{enumerate}
\end{rem}

\section{Diagonalization in the commutative case}
\label{sec:commutative_case}

In this section we assume that Assumption~\ref{assumption:H} is fulfilled and, additionally, $d_\xi=\dim(\hH_\xi)=1$ for every $\xi$ in $\Omega$.
In this case, Proposition~\ref{prop:hH_decomposition} implies that there exists a family of functions $(q_\xi)_{\xi\in\Omega}$
with the following properties:
\begin{itemize}
\item[(i)] $\hH_\xi=\bC q_\xi$ and $\|q_\xi\|_{L^2(Y)}=1$
for every $\xi$ in $\Omega$;
\item[(ii)] the function $\Omega\times Y\to\bC$, $(\xi,v)\mapsto q_\xi(v)$,
is measurable.
\end{itemize}
For each $\xi$ in $\Omega$,
the function $q_\xi$ is uniquely defined, up to a constant of absolute value $1$.

In particular, if $H$ is a RKHS satisfying Assumption~\ref{assumption:RKHS}
and equivalent conditions from Theorem~\ref{thm:commutativity_criterion_in_RKHS}, then $q_\xi$ is usually easy to find from $L_\xi$,
see Remark~\ref{rem:q_via_L}.

Identifying $\hH_\xi$ and $\cB(\hH_\xi)$ with $\bC$,
in this section we will simplify the descomposition from Theorem~\ref{thm:general_V_decomposition}
and construct a unitary operator $R\colon H\to L^2(\Omega)$ such that $R \cV R^\ast=\cM_{\Omega}$.
Our treatment generalizes ideas from Vasilevski~\cite{Vasilevski2008book}.

Define $N\colon\hH\to L^2(\Omega)$ by
\begin{equation}\label{eq:N_def}
(Ng)(\xi)
\eqdef \langle g(\xi,\cdot),q_\xi\rangle_{L^2(Y)}
=\int_Y \overline{q_\xi(v)}\,g(\xi,v)\,\dif\la(v).
\end{equation}

\begin{prop}\label{prop:N_properties}
$N$ is a unitary operator, and its inverse $N^\ast\colon L^2(\Omega)\to\hH$ acts by the following rule:
\begin{equation}
\label{eq:N_adjoint}
(N^\ast h)(\xi,y)=
\begin{cases}
q_\xi(y) h(\xi), & \xi \in \Omega; \\
0, & \xi\in\hG\setminus\Omega.
\end{cases}
\end{equation}
\end{prop}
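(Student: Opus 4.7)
The plan is to read off everything from the direct-integral decomposition $\hH=\int_\Omega^\oplus\hH_\xi\,\dif\hnu(\xi)$ of Proposition~\ref{prop:hH_is_direct_integral}, using that each fiber $\hH_\xi$ is the one-dimensional space $\bC q_\xi$ with $\|q_\xi\|_{L^2(Y)}=1$. Since the scalar function $(\xi,v)\mapsto q_\xi(v)$ is measurable (property (ii) above), the scalar function $\xi\mapsto (Ng)(\xi)=\langle g(\xi,\cdot),q_\xi\rangle_{L^2(Y)}$ is measurable on $\Omega$ for every $g$ in $\hH$, by the usual Fubini argument applied to the fundamental sequence of measurable vector fields. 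So $N$ is well-defined as a function into the measurable scalar functions on $\Omega$.

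Next I would prove that $N$ is an isometry. For $g$ in $\hH$ and for $\hnu$-almost every $\xi$ in $\Omega$, the vector $g(\xi,\cdot)$ belongs to $\hH_\xi=\bC q_\xi$, hence $g(\xi,\cdot)=c(\xi)\,q_\xi$ where $c(\xi)=\langle g(\xi,\cdot),q_\xi\rangle_{L^2(Y)}=(Ng)(\xi)$. Therefore $\|g(\xi,\cdot)\|_{L^2(Y)}^2=|(Ng)(\xi)|^2$, and integrating over $\Omega$ with respect to $\hnu$ gives $\|Ng\|_{L^2(\Omega)}^2=\|g\|_{\hH}^2$ by the very definition of the direct-integral norm.

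To see that $N$ is onto, given $h$ in $L^2(\Omega)$, define $g$ on $\hG\times Y$ by $g(\xi,y)\eqdef q_\xi(y)\,h(\xi)$ for $\xi\in\Omega$ and $g(\xi,y)\eqdef 0$ otherwise. Measurability of $g$ follows from measurability of $(\xi,v)\mapsto q_\xi(v)$ and of $h$, while
\[
\int_\Omega \|g(\xi,\cdot)\|_{L^2(Y)}^2\,\dif\hnu(\xi)
=\int_\Omega |h(\xi)|^2\,\|q_\xi\|_{L^2(Y)}^2\,\dif\hnu(\xi)
=\|h\|_{L^2(\Omega)}^2,
\]
so $g\in\int_\Omega^\oplus \hH_\xi\,\dif\hnu(\xi)=\hH$. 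A direct computation gives $(Ng)(\xi)=\langle q_\xi h(\xi),q_\xi\rangle_{L^2(Y)}=h(\xi)$, so $N$ is a surjective isometry, i.e., unitary.

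Finally, formula~\eqref{eq:N_adjoint} is verified by a one-line inner product calculation: if $T$ denotes the right-hand side of~\eqref{eq:N_adjoint} applied to $h$, then for every $g$ in $\hH$,
\[
\langle g,Th\rangle_{\hH}
=\int_\Omega \langle g(\xi,\cdot),q_\xi h(\xi)\rangle_{L^2(Y)}\,\dif\hnu(\xi)
=\int_\Omega (Ng)(\xi)\,\overline{h(\xi)}\,\dif\hnu(\xi)
=\langle Ng,h\rangle_{L^2(\Omega)},
\]
so $T=N^\ast$. The only mild obstacle is the measurability bookkeeping needed to turn property (ii) into measurability of the scalar field $\xi\mapsto(Ng)(\xi)$ and of the vector field $\xi\mapsto q_\xi\,h(\xi)$; once that is granted, everything reduces to the definition of the direct-integral inner product.
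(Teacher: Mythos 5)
Your proof is correct and follows essentially the same route as the paper: the isometry argument is identical, and your function $g(\xi,y)=q_\xi(y)h(\xi)$ is exactly the operator $Z$ the paper defines from the right-hand side of~\eqref{eq:N_adjoint}, with Proposition~\ref{prop:hH_is_direct_integral} invoked in both cases to confirm $g\in\hH$ and a direct computation giving $NZ=I$. The only small difference is that you add a separate inner-product verification that $Z=N^\ast$, which the paper skips since an isometry $N$ with a right inverse $Z$ is automatically unitary with $N^\ast=Z$.
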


\begin{proof}
1. Let $g\in\hH$.
For every $\xi$ in $\Omega$,
by Proposition~\ref{prop:hH_is_direct_integral} we have $g(\xi,\cdot)\in\hH_\xi$.
Since $\hH_\xi=\bC q_\xi$ and
$\|q_\xi\|_{L^2(Y)}^2=1$,
we obtain
$\|g(\xi,\cdot)\|_{L^2(Y)}=|(Ng)(\xi)|$.
Hence, $N$ is isometric:
\[
\|Ng\|_{L^2(\Omega)}^2
=\int_{\Omega}|(Ng)(\xi)|^2\,\dif\hnu(\xi)
=\int_{\Omega}\|g(\xi,\cdot)\|_{L^2(Y)}^2\,
\dif\hnu(\xi)
=\|g\|_{\hH}^2.
\]
2. Let $Z$ be the operator defined by the right-hand side of \eqref{eq:N_adjoint}.
Proposition~\ref{prop:hH_is_direct_integral} assures that $Zh$ indeed belongs to $\hH$ and $Z$ is well-defined.
A simple direct computation yields $NZh=h$, which completes the proof.
\end{proof}

We define $R\colon H\to L^2(\Omega)$ by the following rule:
\begin{equation}\label{eq:R_def}
R \eqdef N \Phi,
\end{equation}
i.e.,
\begin{equation}
\label{eq:R_direct_definition}
(R f)(\xi)
= \int_Y ((F\otimes I) f)(\xi,v) \overline{q_\xi(v)}\,\dif\la(v).
\end{equation}

\begin{rem}\label{rem:R_similar_to_audio_codecs}
The idea of the operator $R$ is similar to the ideas of some lossless audio- and video-codecs: it is a kind of a Fourier transform followed by a ``general compression''.
\end{rem}

\begin{prop}
\label{prop:R_is_unitary}
$R$ is a unitary operator from $H$ onto $L^2(\Omega)$.
\end{prop}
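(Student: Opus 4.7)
The statement is essentially a composition-of-unitaries argument, so the plan is short. First I would observe that, by construction, $\hH = (F\otimes I)(H)$ and $F\otimes I$ is a unitary operator on $L^2(G\times Y)$. Consequently its compression $\Phi\colon H\to\hH$, defined by $\Phi f = (F\otimes I)f$, is a bijective isometry between Hilbert spaces, hence unitary, with inverse $\Phi^{\ast}$ equal to the compression of $(F\otimes I)^{\ast}$ from $\hH$ to $H$.

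Second, Proposition~\ref{prop:N_properties} has already established that $N\colon\hH\to L^2(\Omega)$ is unitary, with adjoint (and inverse) given by the explicit formula~\eqref{eq:N_adjoint}. Since $R = N\Phi$ by definition~\eqref{eq:R_def}, it follows immediately that $R$ is unitary from $H$ onto $L^2(\Omega)$, being a composition of two unitaries. Its adjoint is $R^\ast = \Phi^\ast N^\ast$, which, using~\eqref{eq:N_adjoint}, maps $h\in L^2(\Omega)$ to the element of $H$ whose image under $F\otimes I$ is the function $(\xi,y)\mapsto q_\xi(y) h(\xi)\mathbf{1}_{\Omega}(\xi)$.

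If desired, one can also verify unitarity directly from~\eqref{eq:R_direct_definition} by a brief computation: for $f\in H$, the Plancherel identity gives $\|(F\otimes I)f\|_{L^2(\hG\times Y)} = \|f\|_H$, and since $(F\otimes I)f\in\hH$ decomposes fiberwise along~\eqref{eq:hH_decomposition} with each fiber lying in the one-dimensional space $\bC q_\xi$, one obtains
\[
\|f\|_H^2 = \int_\Omega \|((F\otimes I)f)(\xi,\cdot)\|_{L^2(Y)}^2\,\dif\hnu(\xi) = \int_\Omega |(Rf)(\xi)|^2\,\dif\hnu(\xi) = \|Rf\|_{L^2(\Omega)}^2,
\]
and surjectivity follows from Proposition~\ref{prop:N_properties} together with the surjectivity of $\Phi$.

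There is no real obstacle here; the work has been done in Proposition~\ref{prop:N_properties} and in the setup of $\Phi$. The only thing worth stating carefully is that the integration in the definition of $R$ is genuinely over $\Omega$ rather than $\hG$, which is consistent because fibers outside $\Omega$ are trivial and contribute nothing to either the norm or the image.
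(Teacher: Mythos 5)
Your argument is correct and matches the paper's: $R=N\Phi$ is a composition of the unitary $\Phi$ (a compression of $F\otimes I$ onto its invariant range $\hH$) with the unitary $N$ from Proposition~\ref{prop:N_properties}. The paper's proof is the same one-line observation; your additional direct computation is a harmless reassurance but not needed.
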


\begin{proof}
Indeed, $R$ is the composition of two unitary operators.
\end{proof}

\begin{prop}
\label{prop:RK}
Let $y\in Y$ and $\xi\in\Omega$.
Then
\begin{equation}\label{eq:RK}
(RK_{0,y})(\xi)
=\conju{q_\xi(y)}.
\end{equation}
\end{prop}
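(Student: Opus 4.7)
The plan is to unfold the two definitions chaining up $R$ and immediately land on an inner product that has already been computed. By \eqref{eq:R_def} and \eqref{eq:N_def},
\[
(R K_{0,y})(\xi)
= (N \Phi K_{0,y})(\xi)
= \langle (\Phi K_{0,y})(\xi,\cdot),\, q_\xi \rangle_{L^2(Y)}
\]
for every $\xi$ in $\Omega$. So the whole question reduces to computing one slice of $\Phi K_{0,y}$ and pairing it with $q_\xi$.

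Next I identify the slice $(\Phi K_{0,y})(\xi,\cdot)$ with the function $v \mapsto L_{\xi,y}(v)$. This is exactly the content of the defining formula \eqref{eq:L_definition}; under Assumption~\ref{assumption:RKHS}, condition \eqref{eq:K_integral_bounded} guarantees that $u \mapsto K_{0,y}(u,v)$ is in $L^1(G)$ for every $v$, so the pointwise Fourier integral exists for every $\xi$ and provides a (continuous in $\xi$) representative of the $L^2$-class of $\Phi K_{0,y}$. Thus
\[
(R K_{0,y})(\xi) = \langle L_{\xi,y},\, q_\xi \rangle_{L^2(Y)}.
\]

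Finally, I invoke the commutativity hypothesis $\dim \hH_\xi = 1$ for $\xi\in\Omega$. Under Assumption~\ref{assumption:RKHS} this triggers condition (e) of Theorem~\ref{thm:commutativity_criterion_in_RKHS}, yielding the factorization $L_{\xi,y}(v) = \overline{q_\xi(y)}\, q_\xi(v)$. Substituting and using $\|q_\xi\|_{L^2(Y)} = 1$ gives
\[
(R K_{0,y})(\xi)
= \langle \overline{q_\xi(y)}\, q_\xi,\, q_\xi \rangle_{L^2(Y)}
= \overline{q_\xi(y)}\,\|q_\xi\|_{L^2(Y)}^2
= \overline{q_\xi(y)},
\]
which is \eqref{eq:RK}.

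There is no genuine obstacle here; the statement is essentially the one-dimensional reproducing-kernel identity for $\hH_\xi$ transported through the unitary $R$. The only mildly delicate point is the pointwise (rather than almost-everywhere) identification of $L_{\xi,y}$ with a slice of $\Phi K_{0,y}$, but this is built into Assumption~\ref{assumption:RKHS} and has already been exploited in Section~\ref{sec:invar_RKHS}.
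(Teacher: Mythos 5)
Your proof is correct and follows essentially the same route as the paper: unfold $R=N\Phi$, identify $(\Phi K_{0,y})(\xi,\cdot)$ with $L_{\xi,y}$, and evaluate $\langle L_{\xi,y},q_\xi\rangle_{L^2(Y)}$. The only cosmetic difference is in the last step: the paper reads off $\langle L_{\xi,y},q_\xi\rangle=\overline{q_\xi(y)}$ directly from the reproducing property of $\hH_\xi$, whereas you first substitute the rank-one factorization $L_{\xi,y}=\overline{q_\xi(y)}\,q_\xi$ and then use $\|q_\xi\|=1$; these are the same observation.
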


\begin{proof}
$(RK_{0,y})(\xi)
=
\langle
(\Phi K_{0,y})(\xi,\cdot),q_\xi
\rangle_{L^2(Y)}
=
\langle
L_{\xi,y},q_\xi\rangle_{L^2(Y)}
= \conju{q_\xi(y)}$.
\end{proof}

\begin{rem}
Additionally to the operators $N\colon\hH\to L^2(\Omega)$ and $R\colon H\to L^2(\Omega)$, one can define in a similar way their extended versions
$\tN\colon L^2(\hG\times Y)\to L^2(\Omega)$
and $\tR\colon L^2(G\times Y)\to L^2(\Omega)$.
Then
\[
\tN^\ast \tN=\hP,\qquad
\tN \tN^\ast=I_{L^2(\Omega)},\qquad
\tN^\ast(L^2(\Omega))=\hH,
\]
\[
\tR^\ast \tR = P,\qquad
\tR \tR^\ast
= I_{L^2(\Omega)},\qquad
\tR^\ast(L^2(\Omega))=H.
\]
\end{rem}

\medskip
We recall that $E_a$ is defined by $E_a(\xi)=\xi(a)$,
where $a\in G$ and $\xi\in\hG$.

\begin{prop}\label{prop:R_rhoGY_Rast}
Let $a\in G$. Then
\begin{equation}\label{eq:R_rhoGY_Rast}
R \rho_H(a) R^\ast
= M_{E_{-a}|_\Omega}.
\end{equation}
\end{prop}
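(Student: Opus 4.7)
The plan is to unwind the definition $R = N\Phi$ and trace how each factor transforms the translation.

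First, I would observe that because $H$ is $\rho_{G\times Y}$-invariant (Assumption~\ref{assumption:H}), the operator $\rho_H(a)$ is just the restriction of $\rho_{G\times Y}(a)$ to $H$. By~\eqref{eq:from_rho_GY_to_rho_hGY}, $F\otimes I$ intertwines $\rho_{G\times Y}(a)$ with $\rho_{\hG\times Y}(a)$, so $\hH=(F\otimes I)(H)$ is $\rho_{\hG\times Y}(a)$-invariant and
\[
\Phi \rho_H(a)\Phi^\ast = \rho_{\hG\times Y}(a)\big|_{\hH}.
\]
By~\eqref{eq:rho_hG_Y_explicit}, $\rho_{\hG\times Y}(a)$ acts on $L^2(\hG\times Y)$ as multiplication by the scalar $E_{-a}(\xi)$ (depending only on the first coordinate).

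Next, I would compute directly on $L^2(\Omega)$. Pick $h\in L^2(\Omega)$. By Proposition~\ref{prop:N_properties}, $(N^\ast h)(\xi,y) = q_\xi(y)h(\xi)$ for $\xi\in\Omega$ and zero otherwise. Applying $\Phi\rho_H(a)\Phi^\ast$ multiplies this by $E_{-a}(\xi)$, producing the vector field $\xi\mapsto E_{-a}(\xi)h(\xi)q_\xi$, which again lies in $\hH$ and equals $N^\ast(E_{-a}|_\Omega\cdot h)$ by the formula for $N^\ast$. Finally, $NN^\ast = I_{L^2(\Omega)}$ yields
\[
R\rho_H(a)R^\ast h = N\Phi\rho_H(a)\Phi^\ast N^\ast h = NN^\ast(E_{-a}|_\Omega\cdot h) = E_{-a}|_\Omega\cdot h,
\]
which is exactly $M_{E_{-a}|_\Omega} h$.

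There is no real obstacle: the only mildly delicate point is verifying that the vector field $\xi\mapsto E_{-a}(\xi)h(\xi)q_\xi$ stays inside the direct-integral decomposition of $\hH$ given by Proposition~\ref{prop:hH_is_direct_integral}. This is immediate since each fiber $\hH_\xi = \bC q_\xi$ is one-dimensional and $E_{-a}(\xi)h(\xi)$ is just a scalar multiplier; measurability of $(\xi,y)\mapsto E_{-a}(\xi)h(\xi)q_\xi(y)$ follows from the joint measurability of $(\xi,v)\mapsto q_\xi(v)$ (property (ii) fixed at the start of the section) together with the continuity of $\xi\mapsto E_{-a}(\xi)$.
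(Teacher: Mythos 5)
Your proof is correct and takes essentially the same route as the paper: unwind $R=N\Phi$, use the intertwining relation~\eqref{eq:from_rho_GY_to_rho_hGY} to pass to multiplication by $E_{-a}(\xi)$ on $L^2(\hG\times Y)$, and then apply the explicit formulas for $N$ and $N^\ast$. The only cosmetic difference is that you recognize the intermediate vector field as $N^\ast(E_{-a}|_\Omega\cdot h)$ and invoke $NN^\ast=I$, whereas the paper applies $N$ directly and computes the resulting inner product $\langle E_{-a}(\xi)q_\xi h(\xi),q_\xi\rangle_{L^2(Y)}$; these are the same computation.
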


\begin{proof}
Let $h\in L^2(\Omega)$.
Substituting the definitions and using~\eqref{eq:from_rho_GY_to_rho_hGY} we easily get
\[
R \rho_H(a) R^\ast h
=N \Phi \rho_H(a) \Phi^\ast N^\ast h
=N (F\otimes I) \rho_{G\times Y}(a) (F\otimes I)^\ast N^\ast h
=
N (\rho_{\hG}(a)\otimes I) N^\ast h.
\]
Therefore, for every $\xi$ in $\Omega$,
\[
(R \rho_H(a) R^\ast h)(\xi)
= \langle E_{-a}(\xi) q_\xi h(\xi), q_\xi \rangle_{L^2(Y)}
= E_{-a}(\xi) h(\xi).
\qedhere
\]
\end{proof}

\begin{thm}\label{thm:R_diagonalizes_invariant_operators}
Define $\Lambda\colon L^\infty(\Omega)\to\cV$ by
$\Lambda(\sigma)\eqdef R^\ast M_\sigma R$.
Then $\Lambda$ is an isometric isomorphism of W*-algebras.
In particular, for every $S$ in $\cV$, the product $R S R^\ast$
is a multiplication operator in $L^2(\Omega)$.
\end{thm}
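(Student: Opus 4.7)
The plan is to exploit the unitarity of $R$ from Proposition~\ref{prop:R_is_unitary} together with the intertwining identity $R\rho_H(a)R^\ast=M_{E_{-a}|_\Omega}$ from Proposition~\ref{prop:R_rhoGY_Rast}, and then to identify the image of $\Lambda$ via Corollary~\ref{cor:commuting_with_mul_by_characters}. Because $R$ is unitary and $b\mapsto M_b$ is already an isometric $\ast$-isomorphism of W*-algebras from $L^\infty(\Omega)$ onto $\cM_\Omega$, the composite $\Lambda=R^\ast M_{(\cdot)}R$ is automatically an isometric, unital, injective $\ast$-homomorphism of W*-algebras into $\cB(H)$. So the genuine content of the theorem is the identification of its image with $\cV$.

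To check that $\Lambda(\sigma)\in\cV$ for every $\sigma\in L^\infty(\Omega)$, I would fix $a\in G$, rewrite $\rho_H(a)=R^\ast M_{E_{-a}|_\Omega}R$ by Proposition~\ref{prop:R_rhoGY_Rast}, and then use the trivial commutativity $M_\sigma M_{E_{-a}|_\Omega}=M_{E_{-a}|_\Omega}M_\sigma$ in $\cB(L^2(\Omega))$ to obtain $\Lambda(\sigma)\rho_H(a)=\rho_H(a)\Lambda(\sigma)$.

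The heart of the argument is surjectivity onto $\cV$. Given $S\in\cV$, set $A\eqdef RSR^\ast\in\cB(L^2(\Omega))$. Conjugating the identity $S\rho_H(a)=\rho_H(a)S$ by $R$ and inserting Proposition~\ref{prop:R_rhoGY_Rast} shows that $A$ commutes with $M_{E_{-a}|_\Omega}$ for every $a\in G$, hence with $M_{E_a|_\Omega}$ for every $a$ as well. Corollary~\ref{cor:commuting_with_mul_by_characters} then delivers $\sigma\in L^\infty(\Omega)$ with $A=M_\sigma$, whence $S=R^\ast M_\sigma R=\Lambda(\sigma)$.

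The main obstacle is precisely this surjectivity step: it relies on the fact that the restricted characters $\{E_a|_\Omega:a\in G\}$ already suffice to generate $\cM_\Omega$ as a W*-algebra via separation of points, a nontrivial input which has been packaged into Corollary~\ref{cor:commuting_with_mul_by_characters} through Theorem~\ref{thm:SW_for_Wstar_alg}. Once surjectivity is in hand, the concluding observation that $RSR^\ast$ is a multiplication operator for every $S\in\cV$ is merely a restatement of what has been established.
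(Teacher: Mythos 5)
Your proof is correct and follows essentially the same route as the paper's: reduce everything to $R$ being unitary, conjugate the intertwining identity of Proposition~\ref{prop:R_rhoGY_Rast}, and invoke Corollary~\ref{cor:commuting_with_mul_by_characters} for surjectivity. Your verification that $\Lambda(\sigma)\in\cV$ is slightly more explicit than the paper's (which folds it into the phrase ``algebraic properties of $\Lambda$''), but the key steps and supporting lemmas are identical.
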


\begin{proof}
The algebraic properties of $\Lambda$ and the isometric property of $\Lambda$
follow easily from well-known properties of multiplication operators
and from the fact that $R$ is a unitary operator.

We have to show that $\Lambda$ is surjective.
Let $S\in\cV$
and $B\eqdef R S R^\ast$.
By Proposition~\ref{prop:R_rhoGY_Rast},
for each $a$ in $G$ we have
\[
B M_{E_a|_\Omega}
= (R S R^\ast)(R\rho(-a) R^\ast)
= R S \rho_H(-a) R^\ast
= R \rho_H(-a) S R^\ast
= M_{E_a|_\Omega} B,
\]
i.e., $B$ commutes with $E_a$.
By Corollary~\ref{cor:commuting_with_mul_by_characters},
we conclude that $B\in\cM_\Omega$.
\end{proof}

In particular, Theorem~\ref{thm:R_diagonalizes_invariant_operators} means that the W*-algebras $\cV$ and $\cM_\Omega$ are spatially isomorphic.
Figure~\ref{fig:main_diagram} shows a commutative diagram corresponding to the formula $S=\Lambda(\sigma)=R^\ast M_\sigma R$ from  Theorem~\ref{thm:R_diagonalizes_invariant_operators}, jointly with some auxiliary objects.

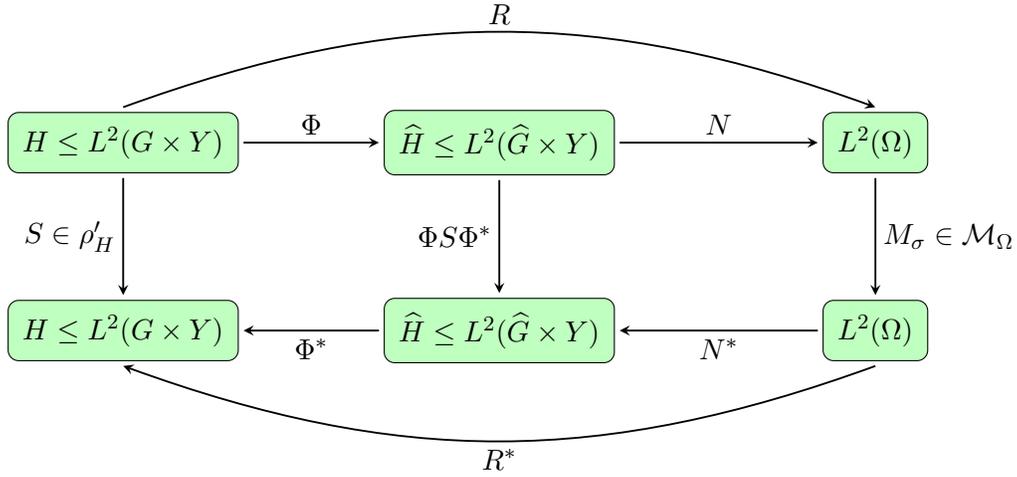
\begin{figure}[ht]
\centering
\begin{tikzpicture}
  [outer sep=2pt,inner sep=1pt,ar/.style={line width=0.7pt,-stealth},
  bignode/.style={draw,
  rectangle,rounded corners,
  inner sep=0.5em,
  fill=green!50!white!50}]
\node [bignode] (H) at (0,0)
  {$H\le L^2(G\times Y)$};
\node [bignode] (hH) at (5,0) {$\hH\le L^2(\hG\times Y)$};
\node [bignode] (L2O) at (10,0) {$L^2(\Omega)$};
\node [bignode] (dH) at (0,-2.5) {$H\le L^2(G\times Y)$};
\node [bignode] (dhH) at (5,-2.5) {$\hH\le L^2(\hG\times Y)$};
\node [bignode] (dL2O) at (10,-2.5) {$L^2(\Omega)$};
\draw[ar] (H)--(hH) node[midway,above] {$\Phi$};
\draw[ar] (hH)--(L2O) node[midway,above] {$N$};
\draw[ar] (dhH)--(dH)
  node[midway,below] {$\Phi^\ast$};
\draw[ar] (dL2O)--(dhH)
  node[midway,below] {$N^\ast$};
\draw[ar,bend left=20]
  (H.north) to
  node [midway,above] {$R$}
  (L2O.north);
\draw[ar,bend left=20]
  (dL2O.south) to
  node [midway,below] {$R^\ast$}
  (dH.south);
\draw[ar] (H)--(dH)
  node[midway,left] {$S\in\rho_H'$};
\draw[ar] (hH)--(dhH)
  node [midway,left] {$\Phi S\Phi^\ast$};
\draw[ar] (L2O)--(dL2O)
  node[midway,right] {$M_\sigma\in\cM_\Omega$};
\end{tikzpicture}
\caption{Operators participating in Theorem~\ref{thm:R_diagonalizes_invariant_operators}.
\label{fig:main_diagram}}
\end{figure}

Given $S$ in $\cV$,
we say that $\sigma\eqdef\Lambda^{-1}(S)$ is the \emph{spectral function} of the operator $S$.
The next corollary provides is an explicit formula for $\sigma$.

\begin{cor}
\label{cor:Lambda_inv_explicit}
Let $S\in\cV$.
Then for every $\xi$ in $\Omega$,
\begin{equation}
\label{eq:Lambda_ast_explicit}
(\Lambda^{-1}(S))(\xi)
=\frac{(R S K_{0,y})(\xi)}%
{\conju{q_\xi(y)}},
\end{equation}
where $y$ is an arbitrary point of $Y$ such that $q_\xi(y)\ne0$.
\end{cor}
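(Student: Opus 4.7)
The plan is to exploit the defining identity of $\Lambda$ together with the explicit formula for $R K_{0,y}$ supplied by Proposition~\ref{prop:RK}. By Theorem~\ref{thm:R_diagonalizes_invariant_operators}, the spectral function $\sigma\eqdef\Lambda^{-1}(S)$ satisfies $S=R^\ast M_\sigma R$, which, because $R$ is unitary, is equivalent to the operator identity $R S=M_\sigma R$. Evaluating both sides of this identity on an arbitrary $f$ in $H$ yields, for $\hnu$-almost every $\xi$ in $\Omega$,
\[
(R S f)(\xi) = \sigma(\xi)\,(R f)(\xi).
\]

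Next, specialize $f$ to the reproducing kernel element $K_{0,y}$ for a fixed $y$ in $Y$. Proposition~\ref{prop:RK} gives $(R K_{0,y})(\xi)=\overline{q_\xi(y)}$, whence
\[
(R S K_{0,y})(\xi) = \sigma(\xi)\,\overline{q_\xi(y)}
\qquad\text{for $\hnu$-a.e. $\xi\in\Omega$.}
\]
Dividing by $\overline{q_\xi(y)}$ at those $\xi$ where this scalar is nonzero gives exactly~\eqref{eq:Lambda_ast_explicit}.

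The only nontrivial point, and the one I expect to require the most care, is to justify that the choice of $y$ promised in the statement is always possible, i.e., that for every $\xi$ in $\Omega$ there exists $y$ in $Y$ with $q_\xi(y)\ne 0$. This is immediate from the properties established for $q_\xi$ in Section~\ref{sec:commutative_case}: since $q_\xi$ was normalized so that $\|q_\xi\|_{L^2(Y)}=1$, the set $\{y\in Y\colon q_\xi(y)\ne 0\}$ has positive $\la$-measure, in particular is nonempty. Thus the denominator in~\eqref{eq:Lambda_ast_explicit} can be made nonzero by an appropriate (possibly $\xi$-dependent) choice of $y$, and the formula determines $\sigma(\xi)$ unambiguously on $\Omega$. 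The equality is to be interpreted in the sense of $L^\infty(\Omega)$: for each fixed $y$ the right-hand side represents the class $\sigma$ on the measurable set where $q_\xi(y)\ne 0$, and these local representations glue together to recover $\sigma$ on all of $\Omega$.
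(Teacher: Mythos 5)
Your proof is correct and follows essentially the same route as the paper: use the identity $RS=M_\sigma R$ with $\sigma=\Lambda^{-1}(S)$, evaluate at $K_{0,y}$, invoke Proposition~\ref{prop:RK} for $(RK_{0,y})(\xi)=\conju{q_\xi(y)}$, and divide. The extra remarks on the existence of $y$ with $q_\xi(y)\ne 0$ (via $\|q_\xi\|_{L^2(Y)}=1$) and on the interpretation as elements of $L^\infty(\Omega)$ are sound but not part of the paper's shorter argument.
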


\begin{proof}
Let $\sigma=\Lambda^{-1}(S)$, i.e., $RS=M_\sigma R$.
Furthermore, let $\xi\in\Omega$ and $y\in Y$ such that $q_\xi(y)\ne0$.
Using~\eqref{eq:RK} we obtain
\[
(R S K_{0,y})(\xi)
= (M_\sigma R K_{0,y})(\xi)
= \sigma(\xi) \conju{q_\xi(y)}.
\]
Dividing by $\conju{q_\xi(y)}$ we get~\eqref{eq:Lambda_ast_explicit}.
\end{proof}

\begin{cor}\label{cor:spectrum_via_sigma}
Let $S\in\cV$ and $\sigma=\Lambda^{-1}(S)$.
Then $\|S\|=\|\sigma\|_\infty$, and the spectrum of $S$ is the essential range of $\sigma$.
\end{cor}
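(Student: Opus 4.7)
The plan is to deduce both claims directly from Theorem~\ref{thm:R_diagonalizes_invariant_operators} together with the standard facts about multiplication operators on $L^2(\Omega,\hnu)$ that were collected in Section~\ref{sec:Wstar_Stone_Weierstrass}. By Theorem~\ref{thm:R_diagonalizes_invariant_operators}, the map $\Lambda$ is an isometric $\ast$-isomorphism of W*-algebras, so in particular $S = R^\ast M_\sigma R$ with $R$ unitary. Hence $S$ is unitarily equivalent to $M_\sigma \in \cM_\Omega$.

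First I would establish the norm identity. Since $R$ is a unitary operator from $H$ onto $L^2(\Omega)$ (Proposition~\ref{prop:R_is_unitary}), unitary invariance of the operator norm gives $\|S\| = \|R^\ast M_\sigma R\| = \|M_\sigma\|$. Then I would invoke the standard fact, recalled in Section~\ref{sec:Wstar_Stone_Weierstrass}, that $b \mapsto M_b$ is an isometric isomorphism from $L^\infty(\Omega,\hnu)$ onto $\cM_\Omega$, so $\|M_\sigma\| = \|\sigma\|_\infty$. (Alternatively, this follows at once from the fact, also stated there, that $\Lambda$ itself is isometric.)

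Next I would handle the spectrum. Unitary equivalence preserves the spectrum, so $\operatorname{sp}(S) = \operatorname{sp}(M_\sigma)$ in $\cB(L^2(\Omega,\hnu))$. Then I would cite the standard fact, again recalled in Section~\ref{sec:Wstar_Stone_Weierstrass}, that the spectrum of a multiplication operator $M_\sigma$ equals the essential range of $\sigma$ (with respect to the measure $\hnu$). Combining the two identities completes the proof.

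There is no real obstacle here: the corollary is a direct translation of general properties of multiplication operators through the spatial isomorphism constructed in Theorem~\ref{thm:R_diagonalizes_invariant_operators}. The only tiny point worth being careful about is that the essential range is computed with respect to the measure $\hnu$ restricted to $\Omega$, which is exactly the measure with respect to which $L^\infty(\Omega)$ and $\cM_\Omega$ are defined in the paper.
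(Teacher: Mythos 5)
Your proof is correct and is exactly the argument the paper relies on: the corollary follows directly from Theorem~\ref{thm:R_diagonalizes_invariant_operators} by unitary invariance of norm and spectrum, combined with the facts $\|M_b\|=\|b\|_\infty$ and $\operatorname{sp}(M_b)=\operatorname{essran}(b)$ recalled in Section~\ref{sec:Wstar_Stone_Weierstrass}. The paper leaves the corollary without a written proof precisely because it is this immediate.
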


\subsection*{Berezin transform of a translation-invariant operator in terms of its spectral function}

\begin{prop}
\label{prop:Ber_invar_oper}
Let Assumption~\ref{assumption:RKHS} holds,
$S\in\cV$, and $\sigma=\Lambda^{-1}(S)$.
Then
\begin{equation}\label{eq:Ber_invar_oper}
\Ber(S)(x,y)=\frac{\int_\Omega \sigma(\xi) |q_\xi(y)|^2\,\dif\hnu(\xi)}{\int_\Omega |q_\xi(y)|^2\,\dif\hnu(\xi)}
\qquad(x\in G,\ y\in Y).
\end{equation}
In particular, $\Ber(S)(x,y)$ does not depend on $x$.
\end{prop}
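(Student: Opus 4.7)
The plan is to reduce everything to the point $x=0$ using translation invariance, and then to unfold the inner products via the unitary operator $R$.

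First, I recall that the Berezin transform is defined by
\[
\Ber(S)(x,y) = \frac{\langle S K_{x,y}, K_{x,y}\rangle_H}{\|K_{x,y}\|_H^2}.
\]
By Proposition~\ref{prop:K_shift}, $K_{x,y}=\rho_H(x)K_{0,y}$. Since $\rho_H(x)$ is unitary, $\|K_{x,y}\|_H=\|K_{0,y}\|_H$. Since $S\in\cV=\rho_H'$, the operator $S$ commutes with $\rho_H(x)$, and therefore
\[
\langle S K_{x,y}, K_{x,y}\rangle_H
= \langle S \rho_H(x) K_{0,y}, \rho_H(x) K_{0,y}\rangle_H
= \langle \rho_H(x)^{\ast}\rho_H(x) S K_{0,y}, K_{0,y}\rangle_H
= \langle S K_{0,y}, K_{0,y}\rangle_H.
\]
This gives the $x$-independence claim, and reduces the proof to the case $x=0$.

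Next, I use that $R\colon H\to L^2(\Omega)$ is unitary (Proposition~\ref{prop:R_is_unitary}) and that $RS=M_\sigma R$ (definition of the spectral function, Theorem~\ref{thm:R_diagonalizes_invariant_operators}). Together with Proposition~\ref{prop:RK}, which gives $(RK_{0,y})(\xi)=\overline{q_\xi(y)}$ for $\xi\in\Omega$, I compute
\[
\langle S K_{0,y},K_{0,y}\rangle_H
=\langle RS K_{0,y},RK_{0,y}\rangle_{L^2(\Omega)}
=\langle M_\sigma RK_{0,y},RK_{0,y}\rangle_{L^2(\Omega)}
=\int_\Omega \sigma(\xi)\,|q_\xi(y)|^2\,\dif\hnu(\xi),
\]
and, analogously,
\[
\|K_{0,y}\|_H^2=\|RK_{0,y}\|_{L^2(\Omega)}^2=\int_\Omega |q_\xi(y)|^2\,\dif\hnu(\xi).
\]
Dividing the first expression by the second yields formula~\eqref{eq:Ber_invar_oper}.

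There is no serious obstacle here; the argument is a direct assembly of already proved facts. The only mild point to keep in mind is that the denominator is nonzero: this follows because $K_{0,y}=0$ would force $f(0,y)=\langle f,K_{0,y}\rangle=0$ for every $f\in H$, so the formula should be understood for those $y\in Y$ for which $K_{0,y}\ne 0$ (equivalently, those $y$ for which $\xi\mapsto q_\xi(y)$ does not vanish $\hnu$-almost everywhere on $\Omega$); otherwise both sides of~\eqref{eq:Ber_invar_oper} are undefined in the same way.
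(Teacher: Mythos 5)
Your proof is correct and follows essentially the same route as the paper's: reduce to $x=0$ via translation invariance of $K$ and $S$, then unfold the inner products through the unitary $R$ (using $RS=M_\sigma R$ and $(RK_{0,y})(\xi)=\overline{q_\xi(y)}$). The extra remark about the denominator being nonzero exactly when $K_{0,y}\ne 0$ is a small, sensible addition the paper leaves implicit.
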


\begin{proof}
Recall that the Berezin transform $\Ber(S)$ of $S$ is defined by
\[
\Ber(S)(x,y)
\eqdef
\frac{\langle S K_{x,y},K_{x,y}\rangle}%
{\langle K_{x,y},K_{x,y}\rangle}\qquad(x\in G,\ y\in Y).
\]
Now we apply~\eqref{eq:translate_K} and the hypothesis that $S$ commutes with $\rho_H(x)$:
\begin{align*}
\Ber(S)(x,y)
&=\frac{1}{\|K_{x,y}\|^2} \langle S K_{x,y}, K_{x,y} \rangle
=\frac{1}{\|\rho_{H}(x)K_{0,y}\|^2} \langle S \rho_{H}(x) K_{0,y}, \rho_{H}(x) K_{0,y} \rangle
\\
&=\frac{1}{\|K_{0,y}\|^2} \langle \rho_{H}(x) S K_{0,y}, \rho_{H}(x)K_{0,y} \rangle
=\frac{1}{\|K_{0,y}\|^2} \langle S K_{0,y}, K_{0,y} \rangle
\\
&=\frac{1}{\|K_{0,y}\|^2} \langle R^\ast M_\sigma R K_{0,y},K_{0,y}\rangle
=\frac{1}{\|K_{0,y}\|^2} \langle M_\sigma R K_{0,y},R K_{0,y}\rangle.
\end{align*}
Substituting~\eqref{eq:RK} we get~\eqref{eq:Ber_invar_oper}.
\end{proof}

\subsection*{Spectral functions of Toeplitz operators with translation-invariant generating symbols}

Given $\phi\in L^\infty(G\times Y)$, we denote by $T_\phi$ the \emph{Toeplitz operator with generating symbol $\phi$}, acting in $H$ by
\[
T_\phi(f)\eqdef P(\phi f) = P M_\phi f.
\]
In the following proposition we compute the spectral function of $T_\phi$, supposing that $\phi$ depends only on the $Y$-component.

\begin{prop}\label{prop:gamma_Toeplitz}
Let $\psi\in L^\infty(Y)$.
Define $\phi\in L^\infty(G\times Y)$ by $\phi(u,v)\eqdef\psi(v)$.
Then $T_\phi\in\cV$ and $T_\phi=\Lambda(\gamma_\psi)$,
where
$\gamma_\psi\colon\Omega\to\bC$ is defined by
\begin{equation}\label{eq:gamma_Toeplitz_def}
\gamma_\psi(\xi) \eqdef \int_Y \psi(v) |q_\xi(v)|^2\,\dif\la(v).
\end{equation}
\end{prop}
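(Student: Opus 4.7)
The plan is to verify the two claims separately: first that $T_\phi\in\cV$, and second that its spectral function equals $\gamma_\psi$.

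For the translation invariance, I would observe that, since $\phi(u,v)=\psi(v)$ depends only on $v$, the multiplication operator $M_\phi$ acts on $L^2(G\times Y)=L^2(G)\otimes L^2(Y)$ as $I_{L^2(G)}\otimes M_\psi$, so it commutes with every operator of the form $A\otimes I_{L^2(Y)}$. In particular, $M_\phi$ commutes with $\rho_{G\times Y}(a)=\rho_G(a)\otimes I_{L^2(Y)}$. Combined with Assumption~\ref{assumption:H}, which says that $P$ commutes with each $\rho_{G\times Y}(a)$, this gives $PM_\phi\,\rho_{G\times Y}(a)=\rho_{G\times Y}(a)\,PM_\phi$, and restricting to the invariant subspace $H$ yields $T_\phi\,\rho_H(a)=\rho_H(a)\,T_\phi$ for every $a\in G$, i.e., $T_\phi\in\cV$.

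For the identification $T_\phi=\Lambda(\gamma_\psi)$, I would compute $\Phi T_\phi\Phi^\ast$ fiberwise and compare against the description from Theorem~\ref{thm:general_V_decomposition}. Given $h\in\hH$, write $h(\xi,\cdot)=c(\xi)q_\xi$ with $c\in L^2(\Omega)$, set $f\eqdef\Phi^\ast h\in H$, and use $(F\otimes I)P=\hP(F\otimes I)$ to obtain $\Phi T_\phi f=\hP(F\otimes I)M_\phi f$. Since $\phi$ is independent of $u$, the operator $M_\phi$ commutes with $F\otimes I$, so $(F\otimes I)M_\phi f=M_\phi h$; in particular, its value at $(\xi,v)$ equals $\psi(v)c(\xi)q_\xi(v)$. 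Applying~\eqref{eq:P_decomposition} fiberwise and using that $\hP_\xi$ is the orthogonal projection onto $\bC q_\xi$, I get
\[
(\Phi T_\phi f)(\xi,\cdot)
= c(\xi)\,\hP_\xi(\psi q_\xi)
= c(\xi)\,\langle\psi q_\xi,q_\xi\rangle_{L^2(Y)}\,q_\xi
= c(\xi)\,\gamma_\psi(\xi)\,q_\xi.
\]
Since $\Lambda(\gamma_\psi)$ acts on the fiber $\hH_\xi=\bC q_\xi$ by scalar multiplication by $\gamma_\psi(\xi)$, this is precisely the value of $\Phi\Lambda(\gamma_\psi)\Phi^\ast h$ at $(\xi,\cdot)$, proving $T_\phi=\Lambda(\gamma_\psi)$.

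The estimate $|\gamma_\psi(\xi)|\le\|\psi\|_\infty\,\|q_\xi\|_{L^2(Y)}^2=\|\psi\|_\infty$ shows $\gamma_\psi\in L^\infty(\Omega)$, so the application of $\Lambda$ is legitimate. The main points to be careful about are the commutation of $M_\phi$ with $F\otimes I$ (which rests on $\phi$ being independent of $u$) and the fiberwise evaluation of $\hP$; both are immediate given the tensor-product and direct-integral framework developed in Sections~\ref{sec:invar_L2}--\ref{sec:invar_H}, so I do not anticipate a serious obstacle.
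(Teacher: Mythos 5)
Your proof is correct and follows essentially the same route as the paper's second proof: the key step in both is the identity $(F\otimes I) P M_\phi = \hP M_\phi (F\otimes I)$ (valid because $\phi$ does not depend on $u$), followed by the fiberwise evaluation $(\hP_\xi(\psi q_\xi))$ and the observation $\langle\hP_\xi(\psi q_\xi),q_\xi\rangle=\langle\psi q_\xi,q_\xi\rangle=\gamma_\psi(\xi)$. You add two small but worthwhile details the paper leaves implicit: a clean tensor-product argument that $T_\phi$ commutes with the translations, and the bound $|\gamma_\psi(\xi)|\le\|\psi\|_\infty$ ensuring $\gamma_\psi\in L^\infty(\Omega)$.
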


\begin{proof}[First proof]
It is easy to see that $T_\phi$ commutes with the horizontal translations $\rho_H(a)$, $a\in G$.
Since $\phi(u,v)$ does not depend on $u$,
the operator $M_\phi$ commutes with $F\otimes I$, and
\begin{equation}
\label{eq:FI_P_Mphi}
(F\otimes I) P M_\phi
= \hP (F\otimes I) M_\phi
= \hP M_\phi (F\otimes I).
\end{equation}
Let $\xi\in\Omega$ and $y\in Y$ such that $q_\xi(y)\ne0$.
Using~\eqref{eq:FI_P_Mphi}
we simplify $RSK_{0,y}$:
\begin{align*}
(R S K_{0,y})(\xi)
&= (N (F\otimes I) P M_\phi K_{0,y})(\xi)
= (N \hP M_\phi (F\otimes I) K_{0,y})(\xi)
= \langle \hP_\xi M_\phi L_{\xi,y}, q_\xi\rangle_{L^2(Y)}
\\
&=
\langle M_\phi L_{\xi,y}, \hP_\xi q_\xi\rangle_{L^2(Y)}
=
\langle M_\phi \conju{q_{\xi}(y)} q_\xi, q_\xi\rangle_{L^2(Y)}
=
\conju{q_{\xi}(y)}
\int_Y \psi(v) |q_\xi(v)|^2\,\dif\la(v).
\end{align*}
With the help of~\eqref{eq:Lambda_ast_explicit} we conclude that $\Lambda^{-1}(T_\phi)=\gamma_\psi$.
\end{proof}

\begin{proof}[Second proof]
Let us verify directly that
$R T_\phi R^\ast = M_{\gamma_\psi}$.
Given $h$ in $L^2(\Omega)$, we simplify $R T_\phi R^\ast h$ applying~\eqref{eq:FI_P_Mphi}:
\begin{equation}\label{eq:R_T_R_ast_auxiliar_step}
R T_\phi R^\ast h
=N (F\otimes I)P M_\phi (F\otimes I)^\ast N^\ast h
=N \hP M_\phi N^\ast h.
\end{equation}
If $\xi\in\Omega$ and $v$ in $Y$, then
$(M_\phi N^\ast h)(\xi,v)
=h(\xi) \psi(v) q_\xi(v)$.
Therefore,
\[
(\hP M_\phi N^\ast h)(\xi,v)
= h(\xi) (\hP_\xi(\psi\,q_\xi))(v),
\]
and
\begin{align*}
(R T_\phi R^\ast h)(\xi)
&= \langle (\hP M_\phi N^\ast h)(\xi,\cdot), q_\xi\rangle_{L^2(Y)}
= h(\xi)\,
\langle \hP_\xi(\psi q_\xi), q_\xi\rangle_{L^2(Y)}
\\
&=  h(\xi)\,
\langle \psi q_\xi, q_\xi\rangle_{L^2(Y)}
= h(\xi) \gamma_\psi(\xi).
\qedhere
\end{align*}
\end{proof}

We denote by $\cVT_0$ the set of all Toeplitz operators of the form $T_\phi$,
where $\phi$ is as in Proposition~\ref{prop:gamma_Toeplitz},
and by $\cG_0$ the set of the spectral functions of such Toeplitz operators:
\begin{equation}\label{eq:cG0_def}
\cG_0\eqdef\{\gamma_\psi\colon\ \psi\in L^\infty(Y)\}.
\end{equation}
Let $\cVT$ and $\cG$ be
the C*-algebras generated by $\cVT_0$ and $\cG_0$, respectively.

\begin{cor}\label{cor:cVT_and_cG}
The C*-algebra $\cVT$ is the image of the C*-algebra $\cG$
with respect to the isometric isomorphism $\Lambda$.
The C*-algebra $\cVT$ is weakly dense in $\cV$
if and only if the C*-algebra $\cG$ is dense in $L^\infty(\Omega)$
with respect to the weak-* topology $\tau_\Omega$.
\end{cor}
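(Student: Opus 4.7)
The plan is to treat the two assertions separately, with the first being a direct consequence of Proposition~\ref{prop:gamma_Toeplitz} and the second following from the fact that $\Lambda$ is a homeomorphism between the relevant topologies.

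For the first assertion, I would invoke Proposition~\ref{prop:gamma_Toeplitz} to get $\Lambda(\gamma_\psi)=T_\phi$ for every $\psi\in L^\infty(Y)$, which gives the bijection $\Lambda(\cG_0)=\cVT_0$ between the generating sets. By Theorem~\ref{thm:R_diagonalizes_invariant_operators}, $\Lambda\colon L^\infty(\Omega)\to\cV$ is an isometric $\ast$-isomorphism of W*-algebras, hence in particular an isometric $\ast$-isomorphism of C*-algebras. Therefore $\Lambda$ maps the C*-algebra generated by $\cG_0$ onto the C*-algebra generated by $\cVT_0$, which gives $\Lambda(\cG)=\cVT$.

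For the second assertion, the plan is to show that $\Lambda$ is a homeomorphism when $L^\infty(\Omega)$ carries the weak-$\ast$ topology $\tau_\Omega$ and $\cV$ carries $\WOT$. To see this, I would pick an arbitrary net $(\sigma_j)_{j\in J}$ in $L^\infty(\Omega)$ and $\sigma\in L^\infty(\Omega)$ and chain together two equivalences. First, by the correspondence recalled in Section~\ref{sec:Wstar_Stone_Weierstrass}, one has $\sigma_j\xrightarrow{\tau_\Omega}\sigma$ if and only if $M_{\sigma_j}\to M_\sigma$ in $\WOT$ in $\cB(L^2(\Omega))$. Second, since $R$ is unitary, conjugation $A\mapsto R^\ast A R$ preserves $\WOT$-convergence, via the identity $\langle R^\ast A R f,g\rangle=\langle A(Rf),Rg\rangle$; hence $M_{\sigma_j}\to M_\sigma$ in $\WOT$ is equivalent to $\Lambda(\sigma_j)\to\Lambda(\sigma)$ in $\WOT$ in $\cB(H)$.

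Combining these equivalences, I would conclude that $\Lambda(\clos_{\tau_\Omega}(\cG))=\clos_{\WOT}(\Lambda(\cG))=\clos_{\WOT}(\cVT)$. Since $\Lambda$ is a bijection from $L^\infty(\Omega)$ onto $\cV$, the $\WOT$-density of $\cVT$ in $\cV$ is equivalent to the $\tau_\Omega$-density of $\cG$ in $L^\infty(\Omega)$. I do not anticipate any real obstacle: the only subtle ingredient is the homeomorphism property, which amounts to the two one-line observations above about multiplication operators and unitary conjugation.
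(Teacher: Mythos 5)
Your proof is correct and follows essentially the same route as the paper: identify $\Lambda(\cG_0)=\cVT_0$, use the $\ast$-isomorphism property of $\Lambda$ to lift this to the generated C*-algebras, and then show $\Lambda$ carries $\tau_\Omega$ onto $\WOT$ to get the density equivalence. You simply spell out the homeomorphism step (via the $\tau$--$\WOT$ correspondence for multiplication operators and invariance of $\WOT$ under unitary conjugation), which the paper asserts without detail.
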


\begin{proof}
$\Lambda$ is an isometrical isomorphism $L^\infty(\Omega)\to\cV$,
and is restriction to $\cG$ is an isometrical isomorphism from $\cG$ onto $\cVT$.
Moreover, $\Lambda$ maps the weak-* topology of $L^\infty(\Omega)$
onto the weak operator topology in $\cV$.
Therefore, $\cVT$ is weakly dense in $\cV$ if and only if
$\cG$ is dense in $(L^\infty(\Omega),\tau_\Omega)$.
\end{proof}

Corollary~\ref{cor:cVT_and_cG} provides us with a tool to study the C*-algebra $\cVT$ generated by Toeplitz operators with translation-invariant generating symbols.
A natural problem is to find the C*-algebra generated by all Toeplitz operators with bounded symbols (not necesarily translation-invariant),
acting in a RKHS $H$.
Various characterizations of this Toeplitz algebra have been found for the Bergman and Segal--Bargmann--Fock spaces,
see Xia~\cite{Xia2015},
Bauer and Fulsche~\cite{BauerFulshe2020}, and Hagger~\cite{Hagger2020}.
Much earlier, Engli\v{s}~\cite{Englis1992} proved that Toeplitz operators
acting in the Bergman space $L^2_{\text{hol}}(\bD)$
are weakly dense in $\cB(L^2_{\text{hol}}(\bD))$.

\section{Non-commutative case with finite-dimensional fibers}
\label{sec:non_commutative_case}

This section is a generalization of the previous one.
In this section we require Assumption~\ref{assumption:H}
and additionally suppose that $d_\xi\eqdef\dim(\hH_\xi)$ is finite for every $\xi$ in $\Omega$.
Let $(q_{j,\xi})_{j\in\bN,\xi\in\Omega}$ be a measurable basis family for the spaces $\hH_\xi$, like in Proposition~\ref{prop:Hxi_measurable}.
For each $\xi$ in $\Omega$, we denote by $Q_\xi$ the column-vector-function
\[
Q_\xi(v) \eqdef
\bigl[ q_{j,\xi}(v) \bigr]_{j=1}^{d_\xi}\qquad(v\in Y).
\]
Its conjugate transpose is the row-vector-function
\[
Q_\xi^\ast(v)
=
\left(\bigl[\,\conju{q_{j,\xi}(v)}\,\bigr]_{j=1}^{d_\xi}\right)^\top
=
\bigl[\,\conju{q_{1,\xi}(v)},\ldots,\conju{q_{d_\xi,\xi}(v)}\, \bigr]_{j=1}^{d_\xi}.
\]
Since $\hH_\xi$ is finite-dimensional, it is a RKHS over $Y$, and its reproducing kernel $(L_{\xi,y})_{y\in Y}$ can be expressed via the orthonormal basis $q_{1,\xi},\ldots,q_{d_\xi,\xi}$ of $\hH_\xi$:
\begin{equation}\label{eq:L_via_xis}
L_{\xi,y}(v)
=\sum_{j=1}^{d_\xi}
\overline{q_{j,\xi}(y)}\,
q_{j,\xi}(v)
=Q_\xi^\ast(y) Q_\xi(v).
\end{equation}
When Assumption~\ref{assumption:RKHS} holds, $L$ can be computed in terms of $K$ by~\eqref{eq:L_definition}, and in some examples one can find functions $q_{j,\xi}$ decomposing $L$ like in~\eqref{eq:L_via_xis}.

This section has many similarities with the previous one, thereby we omit detailed proofs.

We denote by $\cX$ the following direct integral of Hilbert spaces $\bC^{d_\xi}$:
\[
\cX \eqdef \int^{\oplus}_\Omega \bC^{d_\xi}\,\dif\hnu(\xi).
\]
The elements of $\cX$ are classes of equivalence of  vector sequences,
component-wise measurable on $\{\xi\in\Omega\colon\ d_\xi=m\}$ for every $m$,
and square-integrable. 
We define $N\colon\hH\to\cX$ by
\begin{equation}\label{eq:N_multidimensional}
(N g)(\xi)
\eqdef
\left[ \langle g(\xi,\cdot), q_{j,\xi}\rangle_{\hH_\xi}\right]_{j=1}^{d_\xi}
=\left[ \int_Y \overline{q_{j,\xi}(v)} g(\xi,v)\,\dif\la(v)\right]_{j=1}^{d_\xi}
=\int_Y \conju{Q_\xi(v)}\,g(\xi,v)\,\dif\la(v).
\end{equation}

\begin{prop}\label{prop:N_multidimensional_properties}
$N$ is a unitary operator from $\hH$ onto $\cX$.
Its inverse $N^\ast\colon\cX\to\hH$
acts by the following rule:
\begin{equation}\label{eq:Nast_multidimensional}
(N^\ast h)(\xi,y)
=\sum_{j=1}^{d_\xi}q_{j,\xi}(y)h_j(\xi)
=Q_\xi^\top(y) h(\xi)
\qquad(h\in\cX,\ \xi\in\hG,\ y\in Y).
\end{equation}
\end{prop}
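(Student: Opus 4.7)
The plan is to mimic the strategy of Proposition~\ref{prop:N_properties}, now using Parseval's identity in each fiber $\hH_\xi$ in place of the one-dimensional normalization $\|q_\xi\|=1$. The proof splits into the two standard parts: showing that $N$ is isometric, and producing a concrete right inverse that coincides with $N^\ast$.

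First, I would verify that $N$ is well defined and isometric. By Proposition~\ref{prop:hH_is_direct_integral}, an element $g\in\hH$ is a square-integrable vector field with $g(\xi,\cdot)\in\hH_\xi$ for $\hnu$-almost every $\xi\in\Omega$. Since $(q_{j,\xi})_{j=1}^{d_\xi}$ is an orthonormal basis of $\hH_\xi$ by Proposition~\ref{prop:Hxi_measurable}, Parseval's identity gives
\[
\|g(\xi,\cdot)\|_{L^2(Y)}^2
=\sum_{j=1}^{d_\xi}\bigl|\langle g(\xi,\cdot),q_{j,\xi}\rangle_{L^2(Y)}\bigr|^2
=\|(Ng)(\xi)\|_{\bC^{d_\xi}}^2.
\]
The measurability of $(\xi,v)\mapsto q_{j,\xi}(v)$ given in Proposition~\ref{prop:Hxi_measurable}(ii) ensures that $\xi\mapsto \langle g(\xi,\cdot),q_{j,\xi}\rangle_{L^2(Y)}$ is measurable on $\{\xi\in\Omega\colon d_\xi\ge j\}$, so $Ng$ indeed defines an element of $\cX$. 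Integrating the pointwise identity over $\Omega$ against $\hnu$ yields $\|Ng\|_{\cX}=\|g\|_\hH$.

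Second, I would define $Z\colon\cX\to\hH$ by the right-hand side of~\eqref{eq:Nast_multidimensional}, extended by zero on $\hG\setminus\Omega$, and check $Z(\cX)\subseteq\hH$. For $h\in\cX$, the vector field $(Zh)(\xi,\cdot)=\sum_{j=1}^{d_\xi} h_j(\xi)\,q_{j,\xi}$ lies in $\hH_\xi$ for each $\xi\in\Omega$, and Parseval gives $\|(Zh)(\xi,\cdot)\|_{L^2(Y)}^2=\sum_j |h_j(\xi)|^2=\|h(\xi)\|_{\bC^{d_\xi}}^2$; integrating over $\Omega$ shows $Zh\in\hH$ with $\|Zh\|_\hH=\|h\|_{\cX}$. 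Measurability of $(\xi,y)\mapsto (Zh)(\xi,y)$ follows once again from the joint measurability of $q_{j,\xi}(y)$.

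Third, I would compute $NZ$ directly: for $h\in\cX$ and $\xi\in\Omega$,
\[
(NZh)(\xi)
=\Bigl[\bigl\langle\textstyle\sum_{k=1}^{d_\xi} h_k(\xi) q_{k,\xi},\,q_{j,\xi}\bigr\rangle_{L^2(Y)}\Bigr]_{j=1}^{d_\xi}
=[h_j(\xi)]_{j=1}^{d_\xi}=h(\xi),
\]
using orthonormality of $(q_{j,\xi})_j$. Thus $NZ=I_\cX$; combined with the isometry of $N$ this gives that $N$ is surjective and unitary, and $N^\ast=Z$ is the operator described by~\eqref{eq:Nast_multidimensional}. No step is particularly delicate once Proposition~\ref{prop:Hxi_measurable} is invoked; the only mild care required is to restrict the $j$-th coordinate computations to the measurable set $\{\xi\colon d_\xi\ge j\}$ and to set everything to zero off $\Omega$, so that all expressions live in the correct direct integrals.
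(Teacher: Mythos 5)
Your proof is correct and follows essentially the same route as the paper: the paper's one-line proof invokes Proposition~\ref{prop:hH_is_direct_integral} together with the fiberwise isomorphism $\hH_\xi\cong\bC^{d_\xi}$ given by the orthonormal basis $(q_{j,\xi})_j$, and your argument simply spells out the details of that isomorphism (Parseval, measurability, the explicit right inverse $Z$) in the spirit of the paper's proof of Proposition~\ref{prop:N_properties}.
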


\begin{proof}
We apply Proposition~\ref{prop:hH_is_direct_integral} and use the isomorphism between $\hH_\xi$ and $\bC^\xi$
induced by the orthonormal basis $(q_{j,\xi})_{j=1}^{d_\xi}$.
\end{proof}

In particular, formula~\eqref{eq:Nast_multidimensional} tells us that
$(N^\ast h)(\xi,y)=0$ for $\xi$ in $\hG\setminus\Omega$,
because the corresponding sum in~\eqref{eq:Nast_multidimensional} is empty.

We define $R\colon H\to\cX$ as the composition $R\eqdef N\Phi$.

\begin{prop}
\label{prop:R_properties_multidimensional}
$R$ is a unitary operator from $H$ onto $\cX$.
\end{prop}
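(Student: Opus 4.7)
The plan is to write this proof essentially as a one-line observation, in direct parallel to Proposition~\ref{prop:R_is_unitary} from the commutative case: $R$ is defined as the composition $R = N\Phi$, so it suffices to exhibit both factors as unitary operators between the relevant Hilbert spaces. Since the composition of unitary operators is unitary, no further work is needed.

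In more detail, I would first recall that $F\otimes I_{L^2(Y)}$ is a unitary operator on $L^2(G\times Y)$ (this was used from the start of Section~\ref{sec:invar_L2}). By definition, $\hH = (F\otimes I)(H)$, and $\Phi$ is the compression of $F\otimes I$ to the domain $H$ with codomain $\hH$; hence $\Phi\colon H\to\hH$ is a unitary operator, with $\Phi^\ast$ the compression of $(F\otimes I)^\ast$ from $\hH$ to $H$. Next, Proposition~\ref{prop:N_multidimensional_properties} (just proved) establishes that $N\colon\hH\to\cX$ is unitary. Therefore $R = N\Phi$ is a composition of two unitaries, hence itself a unitary operator from $H$ onto $\cX$, with adjoint $R^\ast = \Phi^\ast N^\ast$.

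There is no real obstacle here, since all the substantive work was absorbed into the preceding proposition about $N$ and into the definition of $\Phi$. If anything, the only thing worth mentioning is that this parallels the commutative case exactly, with $L^2(\Omega)$ replaced by the direct integral $\cX$, and with the scalar multiplications by $q_\xi$ promoted to vector contractions against $Q_\xi$. Accordingly, the proof can be stated in a single sentence without unpacking any calculations.
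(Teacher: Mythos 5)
Your proof is correct and matches the paper's approach exactly: the paper omits the proof here (noting that the section parallels the commutative one), but the commutative analogue (Proposition~\ref{prop:R_is_unitary}) is proved by the identical one-line observation that $R=N\Phi$ is a composition of unitaries, with $\Phi$ unitary by construction and $N$ unitary by the immediately preceding proposition.
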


\begin{prop}
\label{prop:RK_multidimensional}
Let $\xi\in\Omega$ and $y\in Y$. Then
\begin{equation}
\label{eq:RK_multidimensional}
(RK_{0,y})(\xi)
=
\bigl[\,\conju{q_{j,\xi}(y)}\,\bigr]_{j=1}^{d_\xi}
=
\conju{Q_\xi(y)}.
\end{equation}
\end{prop}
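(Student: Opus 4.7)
The plan is to follow exactly the pattern of the proof of Proposition~\ref{prop:RK} (the commutative analog) and simply carry it out componentwise in the basis $(q_{j,\xi})_{j=1}^{d_\xi}$ of $\hH_\xi$.

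First, I would unfold the definition $R=N\Phi$ to write
\[
(RK_{0,y})(\xi) = (N(\Phi K_{0,y}))(\xi).
\]
Next, by the very definition~\eqref{eq:L_definition} of $L$, we have $(\Phi K_{0,y})(\xi,\cdot)=L_{\xi,y}$ (or more precisely, this is its equivalence class in $L^2(Y)$, and under Assumption~\ref{assumption:RKHS} the integral producing $L$ converges absolutely). Applying the formula~\eqref{eq:N_multidimensional} for $N$, the $j$-th component of $(RK_{0,y})(\xi)$ is
\[
\langle L_{\xi,y},\, q_{j,\xi}\rangle_{L^2(Y)}.
\]

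The remaining step is to compute this inner product and see that it equals $\overline{q_{j,\xi}(y)}$. There are two equivalent ways to see this. The slick way is to invoke the reproducing property of $L_{\xi,y}$ in $\hH_\xi$, established in Theorem~\ref{thm:hH_xi_via_L}: since $q_{j,\xi}\in\hH_\xi$, we get $\langle q_{j,\xi},L_{\xi,y}\rangle_{L^2(Y)}=q_{j,\xi}(y)$ and conjugating yields the claim. The direct way is to substitute the expansion~\eqref{eq:L_via_xis}, $L_{\xi,y}(v)=\sum_k \overline{q_{k,\xi}(y)}\,q_{k,\xi}(v)$, and use the orthonormality of $(q_{k,\xi})_{k=1}^{d_\xi}$:
\[
\langle L_{\xi,y},q_{j,\xi}\rangle_{L^2(Y)}
=\sum_{k=1}^{d_\xi}\overline{q_{k,\xi}(y)}\,\langle q_{k,\xi},q_{j,\xi}\rangle_{L^2(Y)}
=\overline{q_{j,\xi}(y)}.
\]
Stacking these entries into a column vector gives $\overline{Q_\xi(y)}$, which is precisely~\eqref{eq:RK_multidimensional}.

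There is no real obstacle here: the statement is a routine componentwise extension of Proposition~\ref{prop:RK}, and everything substantive (the RKHS structure of $\hH_\xi$ and the identification $\Phi K_{0,y}=L_{\xi,y}$) has already been done in Section~\ref{sec:invar_RKHS}. The only point worth a moment of care is verifying that the finite-dimensionality of $\hH_\xi$ assumed in this section, together with the measurable basis family from Proposition~\ref{prop:Hxi_measurable}, indeed makes the sum~\eqref{eq:L_via_xis} available, so that the calculation above is legitimate for every $\xi\in\Omega$ rather than merely almost every.
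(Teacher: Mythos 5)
Your proof is correct and takes essentially the same route the paper intends: it is the word-for-word componentwise extension of the paper's one-line proof of Proposition~\ref{prop:RK}, namely unfold $R=N\Phi$, identify $(\Phi K_{0,y})(\xi,\cdot)$ with $L_{\xi,y}$, apply the formula~\eqref{eq:N_multidimensional} for $N$ to get $\langle L_{\xi,y},q_{j,\xi}\rangle_{L^2(Y)}$ in each component, and finish with the reproducing property of $L_{\xi,y}$ in $\hH_\xi$ (or equivalently the orthonormal expansion~\eqref{eq:L_via_xis}). The paper deliberately omits the detailed argument here, citing the analogy with Section~\ref{sec:commutative_case}, and your closing remark correctly identifies the one point that needs Assumption~\ref{assumption:RKHS} rather than only Assumption~\ref{assumption:H}: it is what makes $\hH_\xi$, $L_{\xi,y}$, and hence the identity hold for \emph{every} $\xi\in\Omega$, not just almost every.
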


We denote by $\cZ$ the following direct integral of matrix algebras:
\begin{equation}\label{eq:cZ_def}
\cZ\eqdef
\int^{\oplus}_{\Omega}
\bC^{d_\xi\times d_\xi}\,\dif\hnu(\xi).
\end{equation}
Given a matrix family $\sigma=(\sigma(\xi))_{\xi\in\Omega}$ in $\cZ$, let $M_\sigma$ be the ``multiplication operator'' acting in $\cX$ by
\[
(M_\sigma h)(\xi)
\eqdef \sigma(\xi) h(\xi).
\]
Finally, we define $\Lambda\colon \cZ\to\cV$ by
$\Lambda(\sigma)\eqdef R^\ast M_\sigma R$.

\begin{thm}[from shift-invariant operators to matrix families]
$\Lambda$ is an isometric isomorphism of the W*-algebras $\cZ$ and $\cV$.
\end{thm}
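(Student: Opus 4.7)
The plan is to mirror the strategy used for the commutative case in Section~\ref{sec:commutative_case}, with the single-vector basis $q_\xi$ replaced by the measurable basis field $(q_{j,\xi})_{j=1}^{d_\xi}$. The unitarity of $R$ is already available (Proposition~\ref{prop:R_properties_multidimensional}), so the algebraic part of the statement is automatic: the assignment $\sigma\mapsto M_\sigma$ from $\cZ$ into $\cB(\cX)$ is a $\ast$-homomorphism that is isometric (the norm on $\cZ$ is the essential supremum of $\|\sigma(\xi)\|$, which matches the operator norm of $M_\sigma$ on the direct integral $\cX$), and conjugating by the unitary $R$ preserves all of these properties. Thus $\Lambda$ is an isometric $\ast$-homomorphism $\cZ\to\cB(H)$. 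The real content is to prove that its image equals $\cV$.

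First I would verify $\Lambda(\cZ)\subseteq\cV$. By~\eqref{eq:from_rho_GY_to_rho_hGY},
\[
\Phi\rho_H(a)\Phi^\ast=\rho_{\hG\times Y}(a)\big|_{\hH}=M_{E_{-a}}\otimes I\big|_{\hH},
\]
which in the decomposition~\eqref{eq:hH_decomposition} acts as multiplication of each fiber $\hH_\xi$ by the scalar $E_{-a}(\xi)$. Transferring through $N$, since scalar action on $\hH_\xi$ corresponds to scalar action on $\bC^{d_\xi}$, we obtain
\[
R\rho_H(a)R^\ast=M_{E_{-a}\otimes I_{d_\xi}}.
\]
Because $E_{-a}(\xi) I_{d_\xi}$ is central in $\bC^{d_\xi\times d_\xi}$ for every $\xi$, it commutes with every $M_\sigma$, and hence $\Lambda(\sigma)=R^\ast M_\sigma R$ commutes with every $\rho_H(a)$, giving $\Lambda(\sigma)\in\cV$.

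For surjectivity, take $S\in\cV$ and apply Theorem~\ref{thm:general_V_decomposition} to write $\Phi S\Phi^\ast=\int^\oplus_\Omega T_\xi\,\dif\hnu(\xi)$ with $T_\xi\in\cB(\hH_\xi)$. Using the orthonormal basis $(q_{j,\xi})_{j=1}^{d_\xi}$ of $\hH_\xi$, define the matrix field
\[
\sigma(\xi)_{j,k}\eqdef\langle T_\xi q_{k,\xi},q_{j,\xi}\rangle_{L^2(Y)}\qquad(1\le j,k\le d_\xi).
\]
Here the essential norm bound $\esssup_\xi\|\sigma(\xi)\|=\esssup_\xi\|T_\xi\|=\|S\|$ follows from the fact that the matrix representation of $T_\xi$ in an orthonormal basis has the same operator norm, so $\sigma\in\cZ$. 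A direct unwinding of the definitions of $N$ and $N^\ast$ (Proposition~\ref{prop:N_multidimensional_properties}) then shows $RSR^\ast=M_\sigma$, hence $S=\Lambda(\sigma)$. Injectivity of $\Lambda$ is immediate from its isometricity.

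The main obstacle is verifying measurability of the matrix field $\sigma$. This is exactly the point where Proposition~\ref{prop:Hxi_measurable} is essential: because $(q_{j,\xi})$ is a measurable orthonormal basis field, the coefficients $\langle T_\xi q_{k,\xi},q_{j,\xi}\rangle$ depend measurably on $\xi$ for every decomposable $T_\xi$. Equivalently, the basis field induces a measurable unitary identification between the fields $(\hH_\xi)_{\xi\in\Omega}$ and $(\bC^{d_\xi})_{\xi\in\Omega}$, which in turn yields a spatial $\ast$-isomorphism $\int^\oplus_\Omega\cB(\hH_\xi)\,\dif\hnu(\xi)\cong\cZ$ that intertwines $M_\sigma$ on $\cX$ with the corresponding operator on $\int^\oplus_\Omega\hH_\xi\,\dif\hnu(\xi)$. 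Once this identification is in place, the surjectivity statement above becomes a direct translation of Theorem~\ref{thm:general_V_decomposition} into matrix form, and the proof is complete.
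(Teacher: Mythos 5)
Your proof is correct and takes essentially the same route as the paper: the paper's (very brief) proof idea is precisely to apply Theorem~\ref{thm:general_V_decomposition} and then convert each $\cB(\hH_\xi)$ to $\bC^{d_\xi\times d_\xi}$ via the measurable basis field $(q_{j,\xi})$ from Proposition~\ref{prop:Hxi_measurable}, and your argument fills in exactly those details (the separate verification of $\Lambda(\cZ)\subseteq\cV$ is slightly redundant, since Theorem~\ref{thm:general_V_decomposition} already gives an equality, but it is correct and harmless).
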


\begin{proof}[Idea of the proof]
Follows from Theorem~\ref{thm:general_V_decomposition}, after converting each $\cB(\hH_\xi)$ into $\bC^{d_\xi\times d_\xi}$.
\end{proof}

\begin{cor}
\label{cor:Lambda_inv_explicit_multidimensional}
Let $S\in\cV$.
Then for every $\xi$ in $\Omega$,
\begin{equation}
\label{eq:Lambda_ast_explicit_multidimensional}
(\Lambda^{-1}(S))(\xi)
=\bigl[(R S K_{0,y_1})(\xi),\ldots,(R S K_{0,y_{d_\xi}})(\xi)\bigr]\;
\bigl[\overline{Q_\xi(y_1)},\ldots,\overline{Q_\xi(y_{d_\xi})}\bigr]^{-1},
\end{equation}
where $y_1,\ldots,y_{d_\xi}$ are chosen in $Y$ so that the vectors $Q_\xi(y_1),\ldots,Q_\xi(y_{d_\xi})$ are linearly independent.
\end{cor}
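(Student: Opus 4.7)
The plan is to mimic the one-dimensional argument of Corollary~\ref{cor:Lambda_inv_explicit} and turn it into a linear algebra problem for each fixed $\xi$. Set $\sigma\eqdef\Lambda^{-1}(S)\in\cZ$. By the definition of $\Lambda$, the identity $S=R^\ast M_\sigma R$ rearranges (since $R$ is unitary, cf.\ Proposition~\ref{prop:R_properties_multidimensional}) to
\begin{equation*}
R S = M_\sigma R,
\end{equation*}
which means $(RSf)(\xi)=\sigma(\xi)(Rf)(\xi)$ in $\bC^{d_\xi}$ for every $f\in H$ and almost every $\xi\in\Omega$.

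First I would specialize to $f=K_{0,y_k}$ for $k=1,\ldots,d_\xi$ and apply Proposition~\ref{prop:RK_multidimensional} to obtain
\begin{equation*}
(RS K_{0,y_k})(\xi) = \sigma(\xi)\,(RK_{0,y_k})(\xi) = \sigma(\xi)\,\conju{Q_\xi(y_k)}.
\end{equation*}
Each side is a column vector in $\bC^{d_\xi}$. Concatenating these $d_\xi$ identities as columns of a $d_\xi\times d_\xi$ matrix yields the single matrix equation
\begin{equation*}
\bigl[(RS K_{0,y_1})(\xi),\ldots,(RS K_{0,y_{d_\xi}})(\xi)\bigr]
=\sigma(\xi)\,\bigl[\conju{Q_\xi(y_1)},\ldots,\conju{Q_\xi(y_{d_\xi})}\bigr].
\end{equation*}

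The hypothesis that $Q_\xi(y_1),\ldots,Q_\xi(y_{d_\xi})$ are linearly independent implies the same for their componentwise conjugates, so the matrix $\bigl[\conju{Q_\xi(y_1)},\ldots,\conju{Q_\xi(y_{d_\xi})}\bigr]\in\bC^{d_\xi\times d_\xi}$ is invertible. Right-multiplying by its inverse isolates $\sigma(\xi)$ and gives exactly formula~\eqref{eq:Lambda_ast_explicit_multidimensional}.

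The main technical point I would double-check is the implicit existence of such points $y_1,\ldots,y_{d_\xi}$: since the orthonormal family $q_{1,\xi},\ldots,q_{d_\xi,\xi}$ is linearly independent in $L^2(Y)$, the vector-valued map $y\mapsto Q_\xi(y)$ cannot take all its values in a proper linear subspace of $\bC^{d_\xi}$, so a suitable $d_\xi$-tuple exists. The rest of the argument is purely formal and parallels the commutative case treated in Corollary~\ref{cor:Lambda_inv_explicit} verbatim, only with scalars replaced by $d_\xi\times d_\xi$ matrices and the single division replaced by a matrix inversion.
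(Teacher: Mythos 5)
Your proof is correct and follows essentially the same route as the paper: pass to $RS = M_\sigma R$, evaluate at $K_{0,y_k}$ using Proposition~\ref{prop:RK_multidimensional}, concatenate the resulting columns into a $d_\xi\times d_\xi$ matrix equation, and invert. The extra remark verifying that a suitable $d_\xi$-tuple $y_1,\ldots,y_{d_\xi}$ exists (because the orthonormal functions $q_{1,\xi},\ldots,q_{d_\xi,\xi}$ cannot have their joint pointwise values confined to a proper subspace of $\bC^{d_\xi}$) is a small, correct addition not spelled out in the paper.
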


\begin{proof}
Let $\sigma\eqdef\Lambda^{-1}(S)$, i.e., $RS=M_\sigma R$. Then, by~\eqref{eq:RK_multidimensional},
\[
(R S K_{0,y})(\xi)
= (M_\sigma R K_{0,y})(\xi)
= \sigma(\xi) \overline{Q_\xi(y)},
\]
Apply the above equality to the points $y_1,\ldots,y_{d_\xi}$, then join the resulting columns:
\[
\bigl[(R S K_{0,y_1})(\xi),\ldots,(R S K_{0,y_{d_\xi}})(\xi)\bigr]
=\sigma(\xi)
\bigl[\overline{Q_\xi(y_1)},\ldots,\overline{Q_\xi(y_{d_\xi})}\bigr].
\]
Solving this matrix equation for $\sigma(\xi)$ we get~\eqref{eq:Lambda_ast_explicit_multidimensional}.
\end{proof}

\begin{prop}[Berezin transform of a translation-invariant operator]
\label{prop:Ber_invar_oper_multidim}
Let Assumption~\ref{assumption:RKHS} holds,
$S\in\cV$, and let $\sigma\in\cZ$ such that $S=\Lambda(\sigma)$.
Then
\begin{equation}\label{eq:Ber_invar_oper_multidim}
\Ber(S)(x,y)
=\frac{\int_\Omega \sigma(\xi)
L_{\xi,y}(y)\,\dif\hnu(\xi)}{\int_\Omega L_{\xi,y}(y)\,\dif\hnu(\xi)}
\qquad(x\in G,\ y\in Y).
\end{equation}
In particular, $\Ber(S)(x,y)$ does not depend on $x$.
\end{prop}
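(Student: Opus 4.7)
The plan is to follow the same strategy as in the commutative case (Proposition~\ref{prop:Ber_invar_oper}), replacing the scalar-valued spectral function by the matrix-valued family $\sigma$. First, I would use translation invariance to reduce to the case $x=0$: by~\eqref{eq:translate_K}, $K_{x,y}=\rho_H(x)K_{0,y}$, so $\|K_{x,y}\|=\|K_{0,y}\|$, and since $S$ commutes with the unitary $\rho_H(x)$,
\[
\langle SK_{x,y},K_{x,y}\rangle
=\langle \rho_H(x)SK_{0,y},\rho_H(x)K_{0,y}\rangle
=\langle SK_{0,y},K_{0,y}\rangle.
\]
This already delivers the independence of $\Ber(S)(x,y)$ from $x$.

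Next, I would compute the numerator and denominator through the unitary $R\colon H\to\cX$ together with Proposition~\ref{prop:RK_multidimensional}, which asserts $(RK_{0,y})(\xi)=\overline{Q_\xi(y)}$. Since $R$ is unitary,
\[
\|K_{0,y}\|^2
=\int_\Omega\|\overline{Q_\xi(y)}\|^2_{\bC^{d_\xi}}\,\dif\hnu(\xi)
=\int_\Omega L_{\xi,y}(y)\,\dif\hnu(\xi),
\]
where the second equality uses~\eqref{eq:L_via_xis} at $v=y$. Substituting $S=R^\ast M_\sigma R$ and using the same formula for $RK_{0,y}$,
\[
\langle SK_{0,y},K_{0,y}\rangle
=\langle M_\sigma RK_{0,y},RK_{0,y}\rangle_\cX
=\int_\Omega\langle \sigma(\xi)\overline{Q_\xi(y)},\overline{Q_\xi(y)}\rangle_{\bC^{d_\xi}}\,\dif\hnu(\xi).
\]

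Finally, I would interpret the integrand of the numerator. Via the identification $\hH_\xi\cong\bC^{d_\xi}$ induced by the orthonormal basis $(q_{j,\xi})_{j=1}^{d_\xi}$, formula~\eqref{eq:L_via_xis} shows that $\overline{Q_\xi(y)}$ is precisely the coordinate vector of $L_{\xi,y}\in\hH_\xi$; hence $\sigma(\xi)\overline{Q_\xi(y)}$ represents $\sigma(\xi)L_{\xi,y}\in\hH_\xi$, and the reproducing property in $\hH_\xi$ yields
\[
\langle \sigma(\xi)\overline{Q_\xi(y)},\overline{Q_\xi(y)}\rangle_{\bC^{d_\xi}}
=\langle \sigma(\xi)L_{\xi,y},L_{\xi,y}\rangle_{\hH_\xi}
=(\sigma(\xi)L_{\xi,y})(y).
\]
This is the expression abbreviated as $\sigma(\xi)L_{\xi,y}(y)$ in~\eqref{eq:Ber_invar_oper_multidim}, so dividing numerator by denominator completes the proof. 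The only real obstacle is interpretive rather than computational: one must read $\sigma(\xi)L_{\xi,y}(y)$ as the value at $y$ of the function $\sigma(\xi)L_{\xi,y}$, with $\sigma(\xi)$ acting as a bounded operator on the fiber RKHS $\hH_\xi$, rather than as a literal product of a matrix with a scalar. Once this convention is fixed, all four steps are direct assemblies of identities already established in Sections~\ref{sec:invar_RKHS} and~\ref{sec:non_commutative_case}.
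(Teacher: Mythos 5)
Your proposal is correct and follows the same route as the paper, which simply refers back to the proof of Proposition~\ref{prop:Ber_invar_oper} and replaces~\eqref{eq:RK} by~\eqref{eq:RK_multidimensional}. The interpretive remark you add at the end is accurate and useful: the notation $\sigma(\xi)L_{\xi,y}(y)$ in~\eqref{eq:Ber_invar_oper_multidim} must indeed be read as $(\sigma(\xi)L_{\xi,y})(y)=Q_\xi^\top(y)\,\sigma(\xi)\,\overline{Q_\xi(y)}$, with $\sigma(\xi)$ acting on $\hH_\xi$ through the basis $(q_{j,\xi})_{j=1}^{d_\xi}$, and your computation confirms that this agrees with $\langle\sigma(\xi)\overline{Q_\xi(y)},\overline{Q_\xi(y)}\rangle_{\bC^{d_\xi}}$.
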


\begin{proof}
Similar to the proof of Proposition~\ref{prop:Ber_invar_oper}, but applying~\eqref{eq:RK_multidimensional}.
\end{proof}

\begin{prop}[matrix families corresponding to Toeplitz operators with translation-invariant generating symbols]
Let $\psi\in L^\infty(Y)$.
Define $\phi\in L^\infty(G\times Y)$ by $\phi(x,y)=\psi(y)$.
Then $T_\phi=\Lambda(\gamma_\psi)$, where
\begin{equation}\label{eq:gamma_multidimensional}
\gamma_{\psi}(\xi)
\eqdef
\int_Y \psi(v) \conju{Q_\xi(v)}Q_\xi^\ast(v)\,\dif\la(v)
=
\left[
\int_Y \psi(v) \overline{q_{j,\xi}(v)} q_{k,\xi}(v)\,\dif\la(v)
\right]_{j,k=1}^{d_\xi}.
\end{equation}
\end{prop}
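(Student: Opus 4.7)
The plan is to imitate the second proof of Proposition~\ref{prop:gamma_Toeplitz} in matrix-valued form. Since $\Lambda(\sigma) = R^\ast M_\sigma R$, it suffices to verify the identity $R T_\phi R^\ast = M_{\gamma_\psi}$ on $\cX$, which then yields $T_\phi = \Lambda(\gamma_\psi)$.

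First, because $\phi(u,v) = \psi(v)$ is independent of $u$, the operator $M_\phi$ commutes both with $F\otimes I$ and with every translation $\rho_{G\times Y}(a)$. The first property, combined with $(F\otimes I)P(F\otimes I)^\ast = \hP$, yields the exact analog of~\eqref{eq:FI_P_Mphi}:
\[
(F\otimes I)\,P M_\phi = \hP\, M_\phi\,(F\otimes I).
\]
The second property shows that $T_\phi\in\cV$, so the assertion $T_\phi=\Lambda(\gamma_\psi)$ makes sense.

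Next, the definition $R=N\Phi$ and the identity above give
\[
R T_\phi R^\ast = N\,\hP\,M_\phi\,N^\ast.
\]
Using formula~\eqref{eq:Nast_multidimensional}, for $h\in\cX$ one has
$(M_\phi N^\ast h)(\xi,v) = \psi(v)\sum_{k=1}^{d_\xi} q_{k,\xi}(v)\,h_k(\xi).$
Since $\hP_\xi$ is the orthogonal projection onto $\hH_\xi$ with orthonormal basis $(q_{j,\xi})_{j=1}^{d_\xi}$, projecting the right-hand side fiberwise and then applying $N$ (that is, taking the $L^2(Y)$ inner products with $q_{j,\xi}$) produces
\[
[N\hP M_\phi N^\ast h]_j(\xi) = \sum_{k=1}^{d_\xi}h_k(\xi)\int_Y \psi(v)\,\overline{q_{j,\xi}(v)}\,q_{k,\xi}(v)\,\dif\la(v) = [\gamma_\psi(\xi)\,h(\xi)]_j,
\]
by the very definition of the matrix entries of $\gamma_\psi(\xi)$. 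Hence $R T_\phi R^\ast h = M_{\gamma_\psi} h$, as desired.

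The only point deserving a brief comment is the verification that $\gamma_\psi\in\cZ$: joint measurability of $(\xi,v)\mapsto q_{j,\xi}(v)$ from Proposition~\ref{prop:Hxi_measurable} makes each matrix entry measurable in $\xi$, and the bound $\|\gamma_\psi(\xi)\|\le\|\psi\|_\infty$ (holding because $\gamma_\psi(\xi)$ is the matrix of the compression $\hP_\xi M_\psi \hP_\xi|_{\hH_\xi}$ in an orthonormal basis) gives the required essential boundedness. There is no genuine obstacle here; the entire calculation is linear in $h$ and parallels the scalar case, and the main task is simply to keep track of the indices $j,k$ when passing from the reproducing kernel $L_{\xi,y}$ of the commutative case to the basis family $(q_{j,\xi})_j$ of the present setting.
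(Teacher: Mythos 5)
Your proof is correct and follows essentially the same route as the paper's: you reduce the claim to $R T_\phi R^\ast = M_{\gamma_\psi}$, invoke the analogue of~\eqref{eq:FI_P_Mphi} from the scalar case, and carry out the same fiberwise computation $N\hP M_\phi N^\ast$ with the orthonormal basis family $(q_{j,\xi})_j$. Your extra remarks on measurability in $\xi$ and on the bound $\|\gamma_\psi(\xi)\|\le\|\psi\|_\infty$ via the compression $\hP_\xi M_\psi \hP_\xi|_{\hH_\xi}$ are accurate and fill in a detail the paper leaves implicit.
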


\begin{proof}
We will verify that
$R T_\phi R^\ast = M_{\gamma_{\psi}}$.
Same as in the proof of Proposition~\ref{prop:gamma_Toeplitz}, we get~\eqref{eq:FI_P_Mphi}.
If $\xi\in\Omega$ and $v$ in $Y$, then
\[
(M_\phi N^\ast h)(\xi,v)
=\psi(v) Q_\xi^\top(v) h(\xi)
=\sum_{j=1}^{d_\xi}h_j(\xi)q_{j,\xi}(v)\psi(v)
\]
Therefore,
\[
(\hP M_\phi N^\ast h)(\xi,\cdot)
= \hP_\xi\bigl((M_\phi N^\ast h)(\xi,\cdot)\bigr)
= \sum_{k=1}^{d_\xi}h_k(\xi)\hP_\xi (q_{k,\xi}\psi),
\]
and
\begin{align*}
(R T_\phi R^\ast h)(\xi)
&= \bigl[\langle (\hP M_\phi N^\ast h)(\xi,\cdot), q_{j,\xi}\rangle_{L^2(Y)}\bigr]_{j=1}^{d_\xi}
= \left[ \sum_{k=1}^{d_\xi}h_k(\xi) \langle \hP_\xi (q_{k,\xi}\psi),q_{j,\xi}\rangle_{L^2(Y)} \right]_{j=1}^{d_\xi}
\\
&=  \left[ \sum_{k=1}^{d_\xi} \langle  \psi q_{k,\xi},q_{j,\xi}\rangle_{L^2(Y)} h_k(\xi) \right]_{j=1}^{d_\xi}
= \gamma_{\psi}(\xi)h(\xi).
\qedhere
\end{align*}
\end{proof}

\section{Examples}
\label{sec:examples}

To keep this paper to a reasonable length, we restrict ourself to a series of 9 simple examples, mostly with one-dimensional domains $G$ and $Y$.
Example~\ref{example:RBFK} is probably new.
In the other examples, the spectral functions $\gamma_\sigma$ of Toeplitz operators are already known.
Nevertheless, the description of the whole W*-algebra $\cV$ is new for some of these ``old examples''.
We notice that the C*-algebra $\cVT$ from Examples~\ref{example:vertical_harmonic} and \ref{example:radial_harmonic} is not weakly dense in $\cV$.

We use the following notation:
$\mu_n$ is the Lebesgue measure on $\bR^n$ or a subset of $\bR^n$;
$\bR_+\eqdef(0,+\infty)$, $\bN_0\eqdef\{0,1,2,\ldots\}$,
$\bT\eqdef\{\tau\in\bC\colon\ |\tau|=1\}$,
$1_A$ is the characteristic function of $A$; its domain is clear from the context.

In this section, given a LCAG $G$, we denote by $\hG$ a LCAG topologically isomorphic to the dual group of $G$,
and we use some pairing $E\colon G\times\hG\to\bT$.
This means that $\xi\mapsto E(\cdot,\xi)$ is a topological isomorphism
between $\hG$ and the dual group of $G$.
We select the Haar measures $\nu,\hnu$ on $G,\hG$ in such a manner that the Fourier--Plancherel operator $F$ is unitary.
For example, if $G=\bR$, then we put $\hG=\bR$.
One possible pairing is
$E(x,\xi)=\enumber^{\imagunit x\xi}$ with the measures $\nu=\hnu=\frac{1}{\sqrt{2\pi}}\mu_1$;
another one is $E(x,\xi)=\enumber^{2\pi\imagunit x\xi}$, with $\nu=\hnu=\mu_1$.

For each example we have verified assumption~\eqref{eq:K_integral_bounded},
but we have omitted the corresponding computation, for the sake of brevity.

\begin{example}[vertical operators in the holomorphic Bergman space over the upper half-plane]
\label{example:vertical_analytic}
Let $\Pi\eqdef\bR\times\bR_+$ and $H=L^2_{\text{hol}}(\Pi)$.
In this example, $G=\hG=\bR$, $Y=\bR_+$,
$\nu=\hnu=\frac{1}{\sqrt{2\pi}}\mu_1$,
$E(x,\xi)=\enumber^{\imagunit x \xi}$,
$\la=\sqrt{2\pi}\mu_1$,
$\nu\times\la=\mu_2$,
It is well known that $H$ is a Hilbert space with reproducing kernel
\[
K_z(w) = -\frac{1}{\pi(w-\overline{z})^2}.
\]
Identifying $z$ with $(x,y)$ and $w$ with $(u,v)$,
we rewrite the reproducing kernel as
\[
K_{x,y}(u,v)=-\frac{1}{\pi((u-x)+\imagunit(v+y))^2}.
\]
The space $H$ is invariant under horizontal translations.
A simple computation with residues shows that
\[
L_{\xi,y}(v)=\sqrt{\frac{2}{\pi}}\,\xi\,\enumber^{-\xi(y+v)}\,1_{\bR_+}(\xi).
\]
So, in this example $\cV$ is commutative, $\Omega=\bR_+$, and
\[
q_\xi(v)=\left(\frac{2}{\pi}\right)^{1/4}\,
\sqrt{\xi}\,\enumber^{-\xi v}\,1_{\bR_+}(\xi).
\]
Using \eqref{eq:gamma_Toeplitz_def} we compute the spectral functions of vertical Toeplitz operators:
\[
\gamma_\sigma(\xi) = 2\xi \int_{\bR_+} \sigma(v) \enumber^{-2\xi v}\,\dif{}v
\qquad(\xi>0).
\]
This formula coincides with
Vasilevski~\cite[Theorem 3.1]{Vasilevski1999BergmanToeplitz} and~\cite[Theorem 5.2.1]{Vasilevski2008book},
see also Grudsky, Karapetyants, and Vasilevski~\cite{GrudskyKarapetyantsVasilevski2004par}.
The C*-algebra $\cG$ in this example consists of all bounded functions on $\bR_+$, uniformly continuous with respect to the $\log$-distance, see~\cite{HerreraYanezMaximenkoVasilevski2013,HerreraYanezHutnikMaximenko2014}.
\end{example}

\begin{example}[vertical operators in the harmonic Bergman space over the upper half-plane]
\label{example:vertical_harmonic}
Let $G$, $Y$, $\nu$, $\la$, and $E$ be the same as in Example~\ref{example:vertical_analytic},
but $H\eqdef L^2_{\text{harm}}(\Pi)$
be the Bergman space of harmonic functions on $\Pi$.
Using Riesz theorem about the Hardy spaces of harmonic functions,
one can show that
$L^2_{\text{harm}}(\Pi)
=L^2_{\text{hol}}(\Pi)
\oplus\overline{L^2_{\text{hol}}(\Pi)}$.
Therefore, $H$ is a RKHS with reproducing kernel
\[
K_z(w) = -\frac{1}{\pi(w-\overline{z})^2} - \frac{1}{\pi(\overline{w}-z)^2}.
\]
Identifying $z$ with $(x,y)$ and $w$ with $(u,v)$, we obtain
\[
K_{x,y}(u,v)
=-\frac{1}{\pi((u-x)+\imagunit(v+y))^2}
-\frac{1}{\pi((u-x)-\imagunit(v+y))^2}.
\]
Now
\[
L_{\xi,y}(v)=\sqrt{\frac{2}{\pi}}\,|\xi|\,\enumber^{-|\xi|(y+v)}\qquad(\xi\in\bR).
\]
We conclude that in this example $\cV$ is commutative, $\Omega=\bR\setminus\{0\}$,
\[
q_\xi(v)
=\left(\frac{2}{\pi}\right)^{1/4}\,\sqrt{|\xi|}\,\enumber^{-2|\xi| v},
\]
and
\[
\gamma_\sigma(\xi)
=\gamma_\sigma(|\xi|)
=2 |\xi| \int_{\bR_+} \sigma(v) \enumber^{-2|\xi| v}\,\dif{}v.
\]
Thereby we reproduce a result by Loaiza and Lozano~\cite[Theorem 4.16]{LoaizaLozano2013}.
In this example, the spectral functions $\gamma_\sigma$ are even.
The C*-algebra $\cG$ generated by $\cG_0$ coincides with the closure of $\cG_0$ in the norm topology and consists of all even function on $\bR\setminus\{0\}$
whose restrictions to $\bR_+$ are uniformly continuous with respect to the $\log$-distance.
By Theorem~\ref{thm:SW_for_Wstar_alg}, the W*-algebra generated by $\cG_0$ is the class of all essentially bounded even functions on $\bR$,
which is a proper subset of $L^\infty(\bR)$.
So, $\cG$ is not $\tau_\Omega$-dense in $L^\infty(\Omega)$.
By Corollary~\ref{cor:cVT_and_cG}, this means that $\cVT$ is not weakly dense in $\cV$.
\end{example}

\begin{example}[vertical operators in the Bergman space
of true-polyanalytic functions over the upper half-plane]
\label{example:vertical_true_polyanalytic}
Let $G$, $Y$, $\nu$, $\hnu$, $\la$, $E$
be the same as in Example~\ref{example:vertical_analytic}.
For a fixed $m$ in $\bN$,
we consider the space $H\eqdef L^2_{(m)\text{-hol}}(\Pi)$
of all square-integrable $m$-true-polyanalytic functions on the upper half-plane $\Pi$.
Applying the Fourier transform to
the differential equation defining $H$,
Vasilevski computed~\cite[Section 3.4]{Vasilevski2008book}
the operator $(F\otimes I)P(F\otimes I)^\ast$ which we denote by $\hP$.
Namely, he proved that $\hP$ acts by~\eqref{eq:hP_via_L}, with
\begin{equation}\label{eq:L_vertical_true_polyanalytic}
L_{\xi,y}(v)
= 
1_{\bR_+}(\xi)
\sqrt{\frac{2}{\pi}}\,
\xi \enumber^{-\xi(y+v)}
L_{m-1}(2\xi y) L_{m-1}(2\xi v),
\end{equation}
where $L_k$ is the Laguerre polynomial of degree $k$.
This means that $\Omega=\bR_+$,
\[
q_\xi(v) = (2/\pi)^{1/4} \sqrt{\xi} \enumber^{-\xi v} L_{m-1}(2\xi v)\,1_{\bR_+}(\xi)\qquad(\xi\in\bR,\ v>0),
\]
and
\begin{equation}\label{eq:gamma_vertical_true_polyanalytic}
\gamma_\sigma(\xi) = 2\xi \int_{\bR_+} \sigma(v) \enumber^{-2\xi v} (L_{m-1}(2\xi v))^2\,\dif{}v.
\end{equation}
Formula~\eqref{eq:gamma_vertical_true_polyanalytic} was found by Hutn\'{i}k~\cite[Theorem~3.2]{Hutnik2011}
and by Ram\'{i}rez-Ortega and S\'{a}nchez-Nungaray~\cite[Theorem~3.2]{RamirezOrtegaSanchezNungaray2015}.
The C*-algebra $\cG$ for this example coincides with the C*-algebra $\cG$ from Example~\ref{example:vertical_analytic}, see~\cite{HutnikMaximenkoMiskova2016}.
Vasilevski noticed~\cite[Theorem~3.4.1]{Vasilevski2008book}
that the reproducing kernel of $L^2_{(m)\text{-hol}}(\Pi)$ can be obtained by applying $(F\otimes I)^\ast$ to $L$ given by~\eqref{eq:L_vertical_true_polyanalytic}.
Using explicit expressions for the Laguerre polynomials one obtains
\begin{equation}
\label{eq:K_true_poly_half_plane}
K_z(w)
=-\frac{1}{(w-\overline{z})^2}
\sum_{j,k=0}^{m-1}
(-1)^{j+k}
\frac{(m-1)!\,(j+k+1)!}%
{(j!\,k!)^2\,(m-1-j)!\,(m-1-k)!}
\;
\frac{(w-\overline{w})^j\,(z-\overline{z})^k}{(w-\overline{z})^{j+k}}.
\end{equation}
\end{example}

\begin{example}[vertical operators in the Bergman space of polyanalytic functions over the upper half-plane]
Here $G$ and $Y$ are the same as in Example~\ref{example:vertical_true_polyanalytic},
and $H=L_{n\text{-hol}}^2(\Pi)$ 
is the space of square-integrable $n$-analytic functions on $\Pi$.
The decomposition
$H=\cH_1\oplus\cdots\oplus\cH_n$,
where $\cH_m$ is the space from Example~\ref{example:vertical_true_polyanalytic}, implies that
\begin{equation}\label{eq:L_poly_analytic}
L_{\xi,y}(v)
= 1_{\bR_+}(\xi)
\sqrt{\frac{2}{\pi}}\,
\xi \enumber^{-\xi(y+v)}
\sum_{m=1}^n L_{m-1}(2\xi y) L_{m-1}(2\xi v).
\end{equation}
It would be interesting to prove~\eqref{eq:L_poly_analytic} directly, applying the Fourier transform to the reproducing kernel of $L_{n\text{-hol}}^2(\Pi)$,
computed in~\cite{Pessoa2013}
and~\cite{LealMaxRamos2021} in terms of Jacobi polynomials:
\begin{equation}\label{eq:K_polyanalytic}
K^{H}_z(w)
=\frac{n\,(-1)^n}{\pi}
\frac{(z-\overline{w})^{n-1}}{(w-\overline{z})^{n+1}}\,
P_{n-1}^{(0,1)}%
\left(2\,\frac{|w-z|^2}{|w-\overline{z}|^2}-1\right).
\end{equation}
The orthogonality of the Laguerre polynomials implies that
\eqref{eq:L_poly_analytic} is a particular case of~\eqref{eq:L_via_xis}, with $\Omega=\bR_+$, $d_\xi=n$, and
\[
q_{j,\xi}(v)
=(2/\pi)^{1/4}\,
\sqrt{\xi}
\enumber^{-\xi v} L_{j-1}(2\xi v)
\qquad(j=1,\ldots,n,\ \xi>0,\ v>0).
\]
Thereby, the W*-algebra $\cV$ in this example is spatially isomorphic to the direct integral
of matrix algebras,
\[
\cV \cong
\int_{\bR_+}^{\oplus}\bC^{n\times n}\,\dif\hnu(\xi)
\cong
L^\infty(\bR_+,\bC^{n\times n}).
\]
Ram\'{i}rez-Ortega and S\'{a}nchez--Nungaray~\cite[Theorem~4.7]{RamirezOrtegaSanchezNungaray2015} found a complete description of a certain non-commutative C*-subalgebra of $\cVT$.
\end{example}

\begin{example}[vertical operators in wavelet spaces over the positive affine group]
\label{example:wavelet}
Let $\psi$ be a wavelet of the class $L^2(\bR)$ satisfying the admissibility condition:
\begin{equation}\label{eq:admissibility}
\int_{\bR_+}|(F\psi)(t\xi)|^2\,\frac{\dif{}t}{t}=1
\qquad(\xi\in\bR\setminus\{0\}),\qquad
(F\psi)(0)=0.
\end{equation}
Put $G=\bR$, $\nu=\hnu=\mu_1$,
$E(x,\xi)=\enumber^{2\pi\imagunit x\xi}$,
$Y=\bR_+$, $\dif\la(y)=\frac{\dif{}y}{y^2}$.
Notice that $G\times Y$ can be identified with the positive affine group.
For every $(x,y)$ in $G\times Y$, put
\[
\psi_{x,y}(t)=\frac{1}{\sqrt{y}}\psi\left(\frac{t-x}{y}\right).
\]
Define $W_\psi\colon L^2(\bR)\to L^2(G\times Y)$ by
\[
(W_\psi f)(x,y) \eqdef \langle f, \psi_{x,y}\rangle_{L^2(\bR)}.
\]
The wavelet space $H$ associated with $\psi$ can be defined as $W_\psi(L^2(\bR))$.
It is a RKHS over $G\times Y$, with reproducing kernel
\[
K_{x,y}(u,v)
=\langle \psi_{u,v}, \psi_{x,y}\rangle_{L^2(\bR)}
=\langle \psi_{u-x,v}, \psi_{0,y}\rangle_{L^2(\bR)}.
\]
Then
\[
L_{\xi,y}(v)=\sqrt{\vphantom{t}yv}\,(F\psi)(y\xi)\,\overline{(F\psi)(v\xi)}.
\]
So, in this example $\Omega=\bR$, $\cV$ is commutative, and
\[
q_\xi(v)=\sqrt{v}\,\overline{(F\psi)(v\xi)}.
\]
The property $\|q_\xi\|_{L^2(Y)}=1$ follows from \eqref{eq:admissibility}.
The spectral functions are given by
\[
\gamma_\sigma(\xi)
=\int_{\bR_+} \sigma(v)\,|(F\psi)(v\xi)|^2\,\frac{\dif{}v}{v}.
\]
This formula was found by Hutn\'{i}k and Hutn\'{i}kov\'{a} \cite{HutnikHutnikova2011}.
\end{example}

Let us mention without further details another similar example,
studied by Hutn\'{i}kov\'{a} and Mi\'{s}kov\'{a}~\cite{HutnikovaMiskova2015}:
translation-invariant operators in the space related to the continuous Stockwell transform.

In some examples, it is convenient to transform
the domain of the functions and the RKHS.
The next simple proposition provides a recipe
to compute the reproducing kernel after a change of variables followed by the multiplication by some weight.

\begin{prop}\label{prop:transform_RKHS}
Let $D_1$ and $D_2$ be some non-empty sets,
$\cH_1$ be a RKHS over $D_1$, with reproducing kernel $(K^{\cH_1}_z)_{z\in D_1}$,
and $\cH_2$ be a complex vector space of functions over $D_2$, with a pre-inner product.
Suppose that $A$ is a linear isometry from $\cH_1$ to $\cH_2$, acting by the rule
\[
(Af)(z)=p(z)f(\phi(z))\qquad(z\in D_2,\ f\in\cH_1),
\]
where $\phi\colon D_2\to D_1$ and $p\colon D_2\to\bC$.
Then $H\eqdef A(\cH_1)$ is a RKHS over $D_2$, and the reproducing kernel in $H$ can be computed by
\begin{equation}\label{eq:change_RK}
K^H_z(w) = \conju{p(z)} K^{\cH_1}_{\phi(z)}(\phi(w)) p(w).
\end{equation}
\end{prop}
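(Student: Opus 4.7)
The strategy is to explicitly exhibit the candidate reproducing kernel of $H$ by transporting the kernel of $\cH_1$ through the isometry $A$, and then verify the reproducing property by a short direct calculation.

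First, I would note that the restriction of the pre-inner product of $\cH_2$ to the subspace $H=A(\cH_1)$ is automatically a genuine (strictly positive) inner product. Indeed, if $g=Af\in H$ satisfies $\|g\|_H=0$, then the isometry of $A$ gives $\|f\|_{\cH_1}=0$, hence $f=0$ in $\cH_1$, and therefore $g=Af$ is the zero function on $D_2$. Consequently $(H,\langle\cdot,\cdot\rangle_{\cH_2}|_H)$ is a Hilbert space, isometrically isomorphic to $\cH_1$ via $A$.

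Next, I would define the candidate reproducing kernel at each point $z\in D_2$ by
\[
K^H_z \eqdef \overline{p(z)}\,A K^{\cH_1}_{\phi(z)},
\]
which lies in $H$ by construction. For any $g=Af\in H$ and any $z\in D_2$, the reproducing property in $\cH_1$ combined with the isometry of $A$ yields
\[
g(z)=p(z)\,f(\phi(z))=p(z)\,\langle f,K^{\cH_1}_{\phi(z)}\rangle_{\cH_1}=p(z)\,\langle Af,AK^{\cH_1}_{\phi(z)}\rangle_H=\langle g,K^H_z\rangle_H.
\]
This simultaneously shows that the evaluation functional at $z$ is bounded on $H$, so $H$ is a RKHS, and identifies $K^H_z$ as its reproducing element. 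Finally, evaluating $K^H_z$ at $w\in D_2$ through the rule defining $A$ gives
\[
K^H_z(w)=\overline{p(z)}\,(AK^{\cH_1}_{\phi(z)})(w)=\overline{p(z)}\,p(w)\,K^{\cH_1}_{\phi(z)}(\phi(w)),
\]
which is exactly formula~\eqref{eq:change_RK}.

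There is no serious obstacle here. The only mildly subtle step is the observation that the isometry assumption on $A$ automatically upgrades the restricted pre-inner product on $H$ to a bona fide inner product; this is needed because $\cH_2$ itself is only assumed to carry a pre-inner product. Everything else is a three-line computation using the reproducing property of $\cH_1$ together with the definition of $A$.
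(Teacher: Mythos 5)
Your proof is correct and is essentially the argument the paper outsources: the paper merely observes that $A$ being a linear isometry makes $H$ a Hilbert space and then refers to \cite[Proposition~4.3]{LealMaxRamos2021} and \cite[Section~2.6]{AglerMcCarthy2002} for the rest, which is precisely the direct kernel-transport computation you carry out. Your explicit remark that the isometry upgrades the pre-inner product on $H$ to a genuine inner product (so that $H$ is a Hilbert space, not just a complete pre-Hilbert space) is the same observation the paper compresses into its one-sentence proof, and the rest of your argument --- defining $K^H_z \eqdef \conju{p(z)}\,AK^{\cH_1}_{\phi(z)}$, verifying $g(z)=\langle g,K^H_z\rangle_H$ via isometry, and evaluating to obtain \eqref{eq:change_RK} --- is the standard verification.
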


\begin{proof}
Since $A$ is a linear isometry and $\cH_1$ is a Hilbert space, $H$ is also a Hilbert space.
The rest of the proof is the same as in~\cite[Proposition~4.3]{LealMaxRamos2021}.
A similar construction is explained in~\cite[Section~2.6]{AglerMcCarthy2002}.
\end{proof}

\begin{example}[radial operators in the analytic Bergman space over the unit disk]
\label{example:radial_analytic}
Let $\cH_1=L^2_{\text{hol}}(\bD)$ be the Bergman space of analytic functions
over the unit disk $\bD$ provided with the plane Lebesgue measure $\mu_2$.
It is well known that the reproducing kernel of $\cH_1$ is
\[
K^{\cH_1}_z(w)
=\frac{1}{\pi(1-\overline{z}w)^2}.
\]
Let $G$ be the group $\bR/(2\pi\bZ)$ with the normalized Haar measure $\nu$
(we identify $G$ with $[0,2\pi)$),
$\hG=\bZ$ with the counting measure $\hnu$,
$E(u+2\pi\bZ,\xi)=\enumber^{\imagunit u\xi}$ for $u\in\bR$ and $\xi$ in $\bZ$,
and $Y$ be the interval $[0,1)$ with the measure $\dif\la(v)=v\,\dif{}v$.
Define $\phi\colon G\times Y\to\bD$ and $p\colon G\times Y\to\bC$ by
\[
\phi(u,v)=v \enumber^{\imagunit u},\qquad p(u,v)=\sqrt{2\pi}.
\]
Let $\cH_2 = L^2(G\times Y,\nu\otimes\la)$.
The operator $A$, defined as Proposition~\ref{prop:transform_RKHS}, is a linear isometry:
\[
    \|Af\|^2_{\cH_2} 
    = \int_0^1 \int_0^{2\pi} |f(v\enumber^{\imagunit u})|^2 \,v\,\dif{}u\,\dif{}v\\
    = \int_{\bD} |f(z)|^2 \dif{}\mu_2(z) = \|f\|^2_{\cH_1}.
\]
Hence, $A$ converts $\cH_1$
into a certain RKHS $H$ over $G\times Y$, with reproducing kernel
\[
K_{x,y}(u,v)
=\overline{p(x,y)}\,K^{\cH_1}_{\phi(x,y)}(\phi(u,v))\,p(u,v)
=\frac{2}{(1-yv\enumber^{\imagunit(u-x)})^2}.
\]
Obviously, $A$ intertwines the rotation operators acting in $\cH_1$
into ``horizontal translations'' acting in $H$.
Now we notice that the function $K_{0,y}(\cdot\,,v)$ decomposes into the Fourier series
\[
K_{0,y}(u,v) 
= \sum_{\xi=0}^\infty 2(\xi+1)\,(yv)^\xi\,\enumber^{\imagunit \xi u},
\]
which means that its  Fourier coefficients are
\[
L_{\xi,y}(v)
=2(\xi+1) (yv)^\xi\,1_{\bN_0}(\xi).
\]
Thus, in this example, $\Omega=\bN_0$ and $q_\xi(v)=\sqrt{2(\xi+1)}\,v^\xi$.
The W*-algebra of radial operators in $\cH_1$ is commutative,
and the sequence of the eigenvalues of a radial Toeplitz operator is computed by
\[
\gamma_\sigma(\xi)
=2(\xi+1) \int_0^1 \sigma(v)\,v^{2\xi+1}\,\dif{}v
=(\xi+1) \int_0^1 \sigma(\sqrt{r})\,r^\xi\,\dif{}r
\qquad(\xi\in\bN_0).
\]
These results are well known and easily obtained from the fact
that the radial operators are diagonal in the monomial basis
$(\sqrt{(\xi+1)/\pi}\,z^\xi)_{\xi=0}^\infty$.
Our treatment of this example is close to
\cite[Chapters 4, 6]{Vasilevski2008book}
and~\cite{GrudskyKarapetyantsVasilevski2004rad},
where $L^2(\bD,\mu_2)$ is decomposed into
$L^2(\bR/(2\pi\bZ))\otimes L^2([0,1),r\,\dif r)$,
and the Fourier transform over $\bR/(2\pi\bZ)$
is applied to the equation defining $\cH_1$.
The C*-algebra $\cVT$ for this example was described in~\cite{GrudskyMaximenkoVasilevski2013} using Su\'{a}rez~\cite{Suarez2008}.
\end{example}

Radial operators in the Segal--Bargmann--Fock space on $\bC$
can be studied similarly to Example~\ref{example:radial_analytic}.
Moreover, Example~\ref{example:radial_analytic} is easily generalized
to the case of separately radial operators acting
on the Bergman space over the unit ball in $\bC^n$.
In that case $G=(\bR/(2\pi\bZ))^n$ and $\Omega=\bN_0^n$.

\begin{example}[radial operators in the harmonic Bergman space over the unit disk]
\label{example:radial_harmonic}
For $\cH_1=L^2_{\text{harm}}(\bD)$,
\[
K^{\cH_1}_z(w)
=\frac{1}{\pi(1-\overline{z}w)^2}
+\frac{1}{\pi(1-\overline{w}z)^2}
-1.
\]
Similarly to Example~\ref{example:radial_analytic},
after passing to the polar coordinates and computing the Fourier coefficients, we have
\[
L_{\xi,y}(v)=2(|\xi|+1) (yv)^{|\xi|}\qquad(\xi\in\bZ,\ y,v\in[0,1)).
\]
The W*-algebra of radial operators in $L_{\text{harm}}^2(\bD)$ is commutative,
$\Omega=\bZ$, $q_\xi(v)=\sqrt{2(|\xi|+1)}\,v^{|\xi|}$, and
\begin{equation}\label{eq:eigenvalues_radial_harmonic}
\gamma_\sigma(\xi)=(|\xi|+1) \int_0^1 \sigma(\sqrt{r})\,r^{|\xi|}\,\dif{}r.
\end{equation}
Formula \eqref{eq:eigenvalues_radial_harmonic}
was previously obtained by Loaiza and Lozano~\cite[Theorem~3.4]{LoaizaLozano2013}.

Similarly to Example~\ref{example:vertical_harmonic}, the symmetry of formula~\eqref{eq:eigenvalues_radial_harmonic} with respect to the sign of $\xi$ implies that
$\cG$ is a subclass of bounded symmetric sequences.
By Corollary~\ref{cor:cVT_and_cG}, the C*-algebra generated by Toeplitz operators with radial symbols is not weakly dense in the W*-algebra of all bounded radial operators on $L^2_{\text{harm}}(\bD)$.
\end{example}

\begin{rem}
\label{rem:Toeplitz_harmonic_not_weakly_dense}
Since the radialization transform of bounded linear operators in $L^2_{\text{harm}}(\bD)$
is continuous in WOT and converts Toeplitz operators into radial Toeplitz operators, the last paragraph of Example~\ref{example:radial_harmonic}
implies that the set of all Toeplitz operators is not weakly dense in $\cB(L^2_{\text{harm}}(\bD))$. This result was proven more directly in~\cite{BMR2021}.
In contrast, the weak density of Toeplitz operators $\cB(L^2_{\text{hol}}(\bD))$ has already been proven by Engli\v{s}~\cite{Englis1992}.
\end{rem}

\begin{example}[angular operators in the analytic Bergman space over the upper half-plane]
\label{example:angular_analytic}
Let $\cH_1=L^2_{\text{hol}}(\Pi)$.
We say that an operator $A$ of the class $\cB(\cH_1)$ is \emph{angular}
if $A$ commutes with all dilations $D_h$ ($h>0$), where $D_h$ is given by
\[
(D_h f)(w)=h^{-1} f(h^{-1} w).
\]
Let $G=\bR$, $Y=(0,\pi)$,
$\nu=\hnu=\frac{1}{\sqrt{2\pi}}\mu_1$,
$E(x,\xi)=\enumber^{\imagunit x \xi}$,
and $\la$ be the Lebesgue measure on $(0,\pi)$.
Define $\phi\colon G\times Y\to\Pi$, $p\colon G\times Y\to\bC$,
and $A\colon\cH_1\to L^2(G\times Y)$ by
\[
\phi(u,v)\eqdef \enumber^{u+\imagunit v},\quad
p(u,v)\eqdef
(2\pi)^{1/4}
\enumber^{u+\imagunit v},\quad
(Af)(u,v)
=(2\pi)^{1/4}
\enumber^{u+\imagunit v}
f(\enumber^{u+\imagunit v}).
\]
It is easy to see that $A$ is a linear isometry,
so we can apply Proposition~\ref{prop:transform_RKHS}.
The space $H\eqdef A(\cH_1)$ has reproducing kernel
\[
K_{x,y}(u,v)
=-\sqrt{\frac{2}{\pi}}\frac{\enumber^{x-\imagunit y}\,\enumber^{u+\imagunit v}}{\left(\enumber^{u+\imagunit v}-\enumber^{x-\imagunit y}\right)^2}
=-\sqrt{\frac{2}{\pi}}%
\frac{1}%
{4\left(\sinh\frac{u-x+\imagunit(v+y)}{2}\right)^2}.
\]
The linear isometry $A$ intertwines the dilations, acting in $\cH_1$, with the horizontal translations, acting in $H$:
\[
A D_{\enumber^a} A^\ast
=\rho_H(a).
\]
Hence, the algebra of angular operators in $\cH_1$ is converted into $\cV$.
For $\xi>0$, integral~\eqref{eq:L_definition} can be computed via the residues at the points
$u_k\eqdef-\imagunit(v+y)-2\pi\imagunit k$, $k\in\bN_0$:
\begin{align*}
L_{\xi,y}(v)
&=-\frac{1}{4\pi}
\int_{\bR}
\frac{\enumber^{-\imagunit\xi u}\,\dif{}u}%
{\left(\sinh\frac{u+\imagunit(v+y)}{2}\right)^2}
=\frac{\imagunit}{2}\,
\sum_{k=0}^\infty
\operatornamewithlimits{res}_{u=u_k}
\frac{\enumber^{-\imagunit\xi u}}%
{\left(\sinh\frac{u+\imagunit(v+y)}{2}\right)^2}
\\
&=
\frac{\imagunit}{2}
\sum_{k=0}^\infty
\left(-4\imagunit\xi
\enumber^{-\xi(v+y)}
\enumber^{-2k\pi\xi}
\right)
=
\frac{2\xi\,\enumber^{-\xi(y+v)}}{1-\enumber^{-2\pi \xi}}.
\end{align*}
For $\xi<0$, the integral expresses through the residues at the points $u_k$ with $k<0$, but the final formula for $L_{\xi,y}(v)$ is the same.
We conclude that $\cV$ is commutative, $\Omega=\bR$, and
\[
q_\xi(v)
=\sqrt{\frac{2\xi}{1-\enumber^{-2\pi \xi}}}
\;\enumber^{-\xi v}.
\]
The spectral functions of angular Toeplitz operators can be computed by
\[
\gamma_\sigma(\xi)
=
\frac{2\xi}{1-\enumber^{-2\pi \xi}}
\int_0^\pi \sigma(v)\,\enumber^{-2\xi v}\,\dif{}v.
\]
This formula coincides with
\cite[Theorem 7.2.1]{Vasilevski2008book},
see also~\cite{GrudskyKarapetyantsVasilevski2004hyp}.
The C*-algebra $\cG$ for this example is found by Esmeral, Maximenko, and Vasilevski~\cite{EsmeralMaximenkoVasilevski2015}.
\end{example}

\begin{example}[vertical operators in the RKHS asssociated to the complex Gaussian kernel]
\label{example:RBFK}
The following reproducing kernel
and its restriction to $\bR^n$
are extensively used in machine learning.
They are known as the Gaussian kernel or the 
radial basis function kernel.
\[
K_{z}(w) = \exp\left(-\alpha^2\sum_{j=1}^n(z_j-\conju{w_j})^2\right)\qquad(z,w\in\bC^n).
\]
Here $\alpha$ is a fixed positive number.
Steinwart, Hush, and Scovel~\cite{SteinwartHushScovel2006} proved that the corresponding RKHS is
$H=\{f\in\operatorname{Hol}(\bC^n)\colon\ \|f\|_{\operatorname{RBFK}}<+\infty\}$,
where
\[
\|f\|_{\operatorname{RBFK}}\eqdef
\left(
\frac{2^n\alpha^{2n}}{\pi^n}
\int_{\bC^n} |f(z)|^2
\exp\left(-4\alpha^2\sum_{j=1}^n\Im(z_j)^2\right)
\dif{}\mu_{2n}(z) \right)^{1/2}.
\]
We identify the domain $\bC^n$ with $G\times Y$, where $G=Y=\bR^n$.
The measures and the pairing are
\[
\nu=\hnu=\mu_n,\qquad
\dif\la(v)=\frac{2^n\alpha^{2n}}{\pi^n}\,\exp\left(-4\alpha^2\|v\|^2\right),\qquad
E(x,y)=\exp\left(2\pi\imagunit \langle x,y\rangle\right).
\]
Then the kernel takes the form
\[
K_{x,y}(u,v) 
= \exp\left(
-\alpha^2\sum_{j=1}^n ((u_j-x_j)^2-(v_j+y_j)^2
+2\imagunit(u_j-x_j)(v_j+y_j))
\right).
\]
The computation of $L_{\xi,y}(v)$ can be reduced to the Gaussian integral and results in
\[
L_{\xi,y}(v)=
  \left(\frac{\sqrt{\pi}}{\alpha}\right)^n
  \exp\left(-\sum_{j=1}^n \left(2\pi(v_j+y_j)\xi_j+\frac{\pi^2\xi_j^2}{\alpha^2}\right)\right).
\]
In this example, $\Omega=\bR^n$,
$\cV$ is commutative, and
\[
q_{\xi}(v)
= \left(\frac{\sqrt{\pi}}{\alpha}\right)^{n/2} \exp\left(
-\sum_{j=1}^n \left(2\pi v_j\xi_j + \frac{\pi^2\xi_j^2}{2\alpha^2}\right)
\right).
\]
\end{example}

\begin{rem}
For each example, we tested the equality $((F\otimes I)K_{0,y})(\xi,v)=L_{\xi,y}(v)$
numerically in Sagemath. 
In Example~\ref{example:wavelet}, we used the Mexican hat wavelet.
\end{rem}

\section*{Fundations}

The authors have been partially supported by Proyecto CONACYT ``Ciencia de Frontera''
FORDECYT-PRONACES/61517/2020, by CONACYT (Mexico) scholarships, and by IPN-SIP projects
(Instituto Politécnico Nacional, Mexico).

\section*{Acknowledgements}

The authors are grateful to Nikolai L. Vasilevski
who introduced to us the world of commutative C*-algebras
of translation-invariant Toeplitz operators acting in various reproducing kernel Hilbert spaces,
to Christian Rene Leal Pacheco for the joint revision of various parts of this paper,
to Matthew G. Dawson for explaining us some ideas from Section~\ref{sec:invar_L2},
to Yuri Latushkin for the advice to use tensor products in Section~\ref{sec:invar_L2}, and to Gestur \'{O}lafsson for indicating us that the embedding $C_b(X)\subseteq L^\infty(X,\mu)$ in Section~\ref{sec:Wstar_Stone_Weierstrass} required the assumption $\operatorname{support}(\mu)=X$.

\bigskip\noindent
Crispin Herrera-Ya\~{n}ez\newline
Instituto Polit\'{e}cnico Nacional,
Escuela Superior de C\'{o}mputo\newline
C\'odigo Postal 07730,
Ciudad de M\'{e}xico,
Mexico\newline
e-mail: cherreray@ipn.mx\newline
https://orcid.org/0000-0002-3339-657X

\bigskip\noindent
Egor A. Maximenko\newline
Instituto Polit\'{e}cnico Nacional,
Escuela Superior de F\'{i}sica y Matem\'{a}ticas\newline
C\'odigo Postal 07730,
Ciudad de M\'{e}xico,
Mexico\newline
e-mail: egormaximenko@gmail.com\newline
https://orcid.org/0000-0002-1497-4338

\bigskip\noindent
Gerardo Ramos-Vazquez\newline
Centro de Investigaci\'{o}n y de Estudios Avanzados
del Instituto Polit\'{e}cnico Nacional,
Departamento de Matem\'{a}ticas\newline
C\'odigo Postal 07360,
Ciudad de M\'{e}xico,
Mexico\newline
e-mail: ger.ramosv@gmail.com\newline
https://orcid.org/0000-0001-9363-8043


\begin{thebibliography}{11}

\bibitem{AglerMcCarthy2002}
Agler, J.;
McCarthy, J.E.:
Pick Interpolation and Hilbert Function Spaces.
American Mathematical Society,
Providence, RI
(2002).

\bibitem{Aronszajn1950}
Aronszajn, N.:
Theory of reproducing kernels. Transactions of the American Mathematical Society, \textbf{68}, 337--404 (1950).
\doi{10.2307/1990404}

\bibitem{BMR2021}
Barrera-Castel\'{a}n, R.M.; Maximenko, E.A.; Ramos-Vazquez, G.:
Radial operators on polyanalytic weighted Bergman spaces.
Bol. Soc. Mat. Mex. \textbf{27}, 43 (2021).
\doi{10.1007/s40590-021-00348-w}

\bibitem{BauerFulshe2020}
Bauer,~W.; Fulsche,~R.:
Berger-Coburn theorem, localized operators, and the Toeplitz algebra. 
In: Bauer,~W.; Duduchava,~R.; Grudsky,~S.; Kaashoek,~M. (eds.) Operator Algebras, Toeplitz Operators and Related Topics. Operator Theory: Advances and Applications, vol.~279. Birkhauser, Cham (2020).
\doi{10.1007/978-3-030-44651-2\_8}

\bibitem{DawsonOlafssonQuiroga2015}
Dawson,~M.; \'{O}lafsson,~G.; Quiroga-Barranco,~R.:
Commuting Toeplitz operators on bounded symmetric domains
and multiplicity-free restrictions of holomorphic discrete series.
J. Funct. Anal. \textbf{268}, 1711--1732 (2015).
\doi{10.1016/j.jfa.2014.12.002}

\bibitem{DawsonOlafssonQuiroga2018}
Dawson, M.; \'{O}lafsson, G.; Quiroga-Barranco, R.:
The restriction principle and commuting families of Toeplitz operators on the unit ball.  
S\~{a}o Paulo J. Math. Sci. \textbf{12}, 196--226 (2018). \doi{10.1007/s40863-018-0104-1}

\bibitem{Dixmier1981vonNeumann}
Dixmier, J.:
Von Neumann Algebras.
North-Holland Publishing Company,
Amsterdam, New York, Oxford
(1981).

\bibitem{Englis1992}
Engli\v{s}, M.:
Density of algebras generated by Toeplitz operator on Bergman spaces,
Ark. Mat. \textbf{30}, 227--243  (1992).
\doi{10.1007/BF02384872}

\bibitem{EsmeralMaximenkoVasilevski2015}
Esmeral, K.; Maximenko, E.A.; Vasilevski, N.:
C*-algebra generated by angular Toeplitz operators on the weighted Bergman spaces over the upper half-plane.
Integr. Equ. Oper. Theory \textbf{83}, 413--428  (2015).
\doi{10.1007/s00020-015-2243-4}

\bibitem{Folland2016harmonic}
Folland, G.B.:
A Course in Abstract Harmonic Analysis. 2nd ed.
Taylor \& Francis,
Boca Raton, Florida (2016).

\bibitem{Folland1999real}
Folland, G.B.:
Real Analysis: Modern Techniques and Their Applications, 2nd ed.
Wiley, New York (1999).

\bibitem{GrudskyKarapetyantsVasilevski2004par}
Grudsky, S.; Karapetyants, A.; Vasilevski, N.:
Dynamics of properties of Toeplitz operators on the upper half-plane: Parabolic case.
J. Operator Theory \textbf{52}, 185--214 (2004).
\myurl{http://www.jstor.org/stable/24718968}

\bibitem{GrudskyKarapetyantsVasilevski2004hyp}
Grudsky, S.; Karapetyants, A.; Vasilevski, N.:
Dynamics of properties of Toeplitz operators on the upper half-plane: Hyperbolic case.
Bol. Soc. Mat. Mexicana \textbf{10}, number 1, 119--138 (2004).
\myurl{https://www.smm.org.mx/boletin\_anterior/v10/n1.pdf}

\bibitem{GrudskyKarapetyantsVasilevski2004rad}
Grudsky, S.; Karapetyants, A.; Vasilevski, N.:
Dynamics of properties of Toeplitz operators with radial symbols.
Integr. Equ. Oper. Theory \textbf{50}, 217--253 (2004).
\doi{10.1007/s00020-003-1295-z}

\bibitem{GrudskyQuirogaVasilevski2006}
Grudsky, S.; Quiroga-Barranco, R.; Vasilevski, N.:
Commutative C*-algebras of Toeplitz operators and quantization on the unit disk.
J. Funct. Anal. \textbf{234}, 1--44  (2006).
\doi{10.1016/j.jfa.2005.11.015}

\bibitem{GrudskyMaximenkoVasilevski2013}
Grudsky, S.M.; Maximenko, E.A.; Vasilevski, N.L.:
Radial Toeplitz operators on the unit ball and slowly oscillating sequences.
Commun. Math. Anal. \textbf{14}, 77--94 (2013).
\myurl{http://projecteuclid.org/euclid.cma/1356039033}

\bibitem{Hagger2020}
Hagger, R.:
Essential commutants and characterizations of the
Toeplitz algebra.
Preprint:
\href{https://arxiv.org/abs/2002.02344}{arXiv:2002.02344}
[math.FA] (2020).

\bibitem{HerreraYanezMaximenkoVasilevski2013}
Herrera Ya\~{n}ez, C.; Maximenko, E.A.; Vasilevski, N.:
Vertical Toeplitz operators on the upper half-plane and very slowly oscillating functions.
Integr. Equ. Oper. Theory \textbf{77}, 149--166  (2013).
\doi{10.1007/s00020-013-2081-1}

\bibitem{HerreraYanezHutnikMaximenko2014}
Herrera Ya\~{n}ez, C.; Hutn\'{i}k, O.; Maximenko, E.A.:
Vertical symbols, Toeplitz operators on weighted Bergman spaces over the upper half-plane and very slowly oscillating functions.
Comptes Rendus Mathematique \textbf{352}, 129--132  (2014).
\doi{10.1016/j.crma.2013.12.004}

\bibitem{HewittRoss1979}
Hewitt, E; Ross, K.A.:
Abstract Harmonic Analysis I. 2nd ed.
Springer, New York (1979).

\bibitem{Hutnik2011}
Hutn\'{i}k, O.:
Wavelets from Laguerre polynomials and Toeplitz-type operators.
Integr. Equ. Oper. Theory
\textbf{71}, 357--388 (2011).
\doi{10.1007/s00020-011-1907-y}

\bibitem{HutnikHutnikova2011}
Hutn\'{i}k, O.; Hutn\'{i}kov\'{a}, M.:
On Toeplitz localization operators.
Arch. Math. \textbf{97}, 333--344  (2011).
\doi{10.1007/s00013-011-0307-5}

\bibitem{HutnikovaMiskova2015}
Hutn\'{i}kov\'{a}, M.; Mi\'{s}kov\'{a}, A.:
Continuous Stockwell transform: Coherent states and localization operators.
J. Math. Phys. \textbf{56}, 073504  (2015).
\doi{10.1063/1.4926950}

\bibitem{HutnikMaximenkoMiskova2016}
Hutn\'{i}k, O; Maximenko, E.; Mi\v{s}kov\'{a} A.:
Toeplitz localization operators: spectral functions density.
Complex Anal. Oper. Theory \textbf{10}, 1757--1774  (2016).
\doi{10.1007/s11785-016-0564-1}



\bibitem{Larsen1971}
Larsen, R.:
An Introduction to the Theory of Multipliers.
Springer, Berlin, Heidelberg  (1971).
\doi{10.1007/978-3-642-65030-7}

\bibitem{LealMaxRamos2021}
Leal-Pacheco, C.R.; Maximenko, E.A.; Ramos-Vazquez, G.:
Homogeneously polyanalytic kernels on the unit ball and the Siegel domain.
Complex Anal. Oper. Theory \textbf{15}, 99 (2021).
\doi{10.1007/s11785-021-01145-z}

\bibitem{LoaizaLozano2013}
Loaiza, M.; Lozano, C.:
On C*-algebras of Toeplitz operators on the harmonic Bergman space.
Integr. Equ. Oper. Theory \textbf{76}, 105--130  (2013).
\doi{10.1007/s00020-013-2046-4}

\bibitem{MaximenkoTelleria2020}
Maximenko, E.A.;
Teller\'{i}a-Romero, A.M.:
Radial operators in polyanalytic Bargmann--Segal--Fock spaces.
Chapter in the book: Bauer, W.; Duduchava, R.; Grudsky, S.; Kaashoek, M. (eds.) Operator Algebras, Toeplitz Operators and Related Topics, pp. 277--305.
Book series Operator Theory: Advances and Applications, vol. 279.
Birkhäuser, Cham (2020).
\doi{10.1007/978-3-030-44651-2\_18} 

\bibitem{Pessoa2013}
Pessoa, L.V.:
The method of variation of the domain for poly-Bergman spaces. Math. Nachr. \textbf{286}, 1850--1862 (2013). \doi{10.1002/mana.201010057}

\bibitem{QuirogaBarrancoSanchezNungaray2021}
Quiroga-Barranco, R.;
S\'{a}nchez-Nungaray, A.:
Moment maps of Abelian groups and commuting Toeplitz operators acting on the unit ball.
J. Funct. Anal. \textbf{281}, 109039 (2021).
\doi{10.1016/j.jfa.2021.109039}


\bibitem{RamirezOrtegaSanchezNungaray2015}
Ram\'{i}rez Ortega, J.; S\'{a}nchez-Nungaray, A.:
Toeplitz operators with vertical symbols acting on the poly-Bergman spaces of the upper half-plane.
Complex Anal. Oper. Theory \textbf{9}, 1801--1817  (2015).
\doi{10.1007/s11785-015-0469-4}

\bibitem{Sakai1971}
Sakai, S.:
C*-Algebras and W*-algebras.
Springer-Verlag,
Berlin, Heidelberg, New York
(1971).

\bibitem{SteinwartHushScovel2006}
Steinwart, I.;
Hush, D.;
Scovel, C. (2006):
An explicit description of the reproducing kernel Hilbert spaces of Gaussian RBF kernels,
in IEEE Transactions on Information Theory, vol. 52, no. 10, pp. 4635--4643.
\doi{10.1109/TIT.2006.881713}

\bibitem{Suarez2008}
Su\'{a}rez, D.:
The eigenvalues of limits of radial Toeplitz operators.
Bull. Lond. Math. Soc. \textbf{40}, 631--641  (2008).
\doi{10.1112/blms/bdn042}

\bibitem{Takesaki2002}
Takesaki, M.:
Theory of Operator Algebras I.
2nd printing of the 1st edition.
Springer (2002).

\bibitem{Vasilevski1999BergmanToeplitz}
Vasilevski, N.L.:
On Bergman-Toeplitz operators with commutative symbol algebras.
Integr. Equ. Oper. Theory \textbf{34}, 107--126 (1999).
\doi{10.1007/BF01332495}

\bibitem{Vasilevski2008book}
Vasilevski, N.L.:
Commutative Algebras of Toeplitz Operators on the Bergman Space.
Birkh\"{a}user, Basel (2008).
\doi{10.1007/978-3-7643-8726-6}

\bibitem{Xia2015}
Xia, J.:
Localization and the Toeplitz algebra on the Bergman space.
J. Funct. Anal. \textbf{269}, 781--814  (2015).
\doi{10.1016/j.jfa.2015.04.011}

\end{thebibliography}
\end{document}